\theoremstyle{plain}
\newtheorem{corollary}{Corollary}[section]
\newtheorem{definition}{Definition}[section]
\newtheorem{example}{Example}[section]
\newtheorem{lemma}{Lemma}[section]
\newtheorem{notation}{Notation}[section]
\newtheorem{remark}{Remark}[section]
\newtheorem{theorem}{Theorem}[section]
 \numberwithin{equation}{section}
\begin{document}

\centerline{\Large\bf{Topological Free  Entropy Dimensions  in
Nuclear C$^*$-algebras }}

\vspace{1cm}

\centerline{\Large\bf{  and in Full Free Products of Unital
C$^*$-algebras}}

\vspace{1cm}

\centerline{Don Hadwin \qquad \qquad  Qihui Li\qquad \qquad  and
\qquad \qquad  Junhao Shen\footnote{The author is partially
supported by an NSF grant.}}

\bigskip

\noindent\textbf{Abstract: } In the paper, we introduce a new
concept, topological orbit dimension, of $n$-tuple of elements  in a
unital C$^*$-algebra. Using this concept, we conclude that
Voiculescu's topological free entropy dimension of every finite
family of self-adjoint generators of a nuclear C$^*$-algebra is less
than or equal to $1$. We also show that the Voiculescu's topological
free entropy dimension is additive in the full free product of some
unital C$^*$-algebras. In the appendix, we show that unital  full
free product of Blackadar and
Kirchberg's unital  MF 
  algebras is also   MF algebra. As an application, we
obtain  that $Ext(C_r^*(F_2)*_{\Bbb C}C_r^*(F_2)) $ is not a
group.

\vspace{0.2cm} \noindent{\bf Keywords:} Topological free entropy
dimension, unital C$^*$-algebra

\vspace{0.2cm} \noindent{\bf 2000 Mathematics Subject
Classification:} Primary 46L10, Secondary 46L54

\section{Introduction}

The theory of free probability and free entropy was developed by
Voiculescu from 1980s. His theory plays a crucial role in the recent
study of finite von Neumann algebras (see \cite{BDK}, \cite{Dyk},
\cite{Ge1}, \cite{Ge2}, \cite{GePo}, \cite{GS1}, \cite{GS2},
\cite{HaSh}, \cite{Jung2}, \cite{JuSh}, \cite{V2}, \cite{V3},
\cite{V4}).  The notion of topological free entropy dimension of
$n$-tuple of elements in a unital C$^*$-algebra, as an  analogue of
free entropy dimension for finite von Neumann algebras in
C$^*$-algebra context, was also introduced by Voiculescu in
\cite{Voi}, where basic properties of free entropy dimension are
discussed.

We started our investigation on   properties of topological free
entropy dimension in \cite {HaSh2}, where we computed the
topological free entropy dimension of a self-adjoint element in a
unital C$^*$-algebra. Some estimations of topological free entropy
dimensions in   infinite dimensional, unital, simple C$^*$-algebras
with a unique trace, which include irrational rotation
C$^*$-algebra, UHF algebra and $C^*_{red}(F_2)\otimes_{min}
C^*_{red}(F_2)$, were also obtained in the same paper. In
\cite{HaSh3}, we proved a formula of topological free entropy
dimension in the orthogonal sum (or direct sum) of unital
C$^*$-algebras. As a corollary, we computed the
  topological free entropy dimension of every finite family of self-adjoint generators of a finite dimensional
C$^*$-algebra.
 In this
article, we will continue our investigation on the concept of
Voiculescu's  topological free entropy dimension.

To study Voiculecu's topological free entropy dimension, firstly
we introduce a notion of topological orbit dimension $\frak
K_{top}^{(2)}$, a modification of ``topological free orbit
dimension" in \cite{HaSh2} which is   inspired by the paper
\cite{HaSh}, of $n$-tuple of self-adjoint elements in a unital
C$^*$-algebra. We prove that $\frak K_{top}^{(2)}$ is a
C$^*$-algebra invariant. In fact we have the following result.

\vspace{0.2cm}
 \noindent{\bf Theorem 3.2: } Suppose that $\mathcal A$ is a unital C$^*$-algebra and
$\{x_1,\ldots, x_n\}$, $\{y_1,\ldots, y_p\}$ are two families of
self-adjoint generators of $\mathcal A$. Then
$$
\frak K_{top}^{(2)}(x_1,\ldots, x_n) = \frak
K_{top}^{(2)}(y_1,\ldots, y_p).
$$

\vspace{0.2cm}
 Moreover, we show in {\bf Theorem 3.1} that if $z_1,\ldots,z_m$ is
 a family of self-adjoint elements in a unital C$^*$-algebra
 $\mathcal A$, then
$$
\delta_{top}(z_1,\ldots,z_m)\le \max\{ \frak
K_{top}^{(2)}(z_1,\ldots,z_m),1\},
$$ where $
\delta_{top}(z_1,\ldots,z_m)$ is the Voiculescu's topological free
entropy dimension of $z_1,\ldots,z_m$ in $\mathcal A$. This
result, together with Theorem 3.2, provides us a possible way to
compute the Voiculescu's topological free entropy dimension of an
arbitrary family of self-adjoint generators of a unital
C$^*$-algebra $\mathcal A$  by studying the topological orbit
dimension of a specific family of self-adjoint generators in
$\mathcal A$.

We then study the topological orbit dimension in  tensor products
and orthogonal sums of unital C$^*$-algebras and obtain the
following results.

\vspace{0.2cm}


\noindent {\bf Theorem 3.4: }  Suppose that $\mathcal A$ is a unital
C$^*$-algebra and $n$ is a positive integer. Suppose that $\mathcal
B= \mathcal A\otimes \mathcal M_n(\Bbb C)$. If $x_1,\ldots, x_m$ is
a family of self-adjoint generators of $\mathcal A$ and $y_1,\ldots,
y_p$ is a family of self-adjoint generators of $\mathcal B$, then
$$
  \frak K_{top}^{(2)}(y_1,\ldots, y_p) \le \frak
  K_{top}^{(2)}(x_1,\ldots, x_m).
$$

\noindent {\bf Theorem 3.5: } Suppose that $\mathcal A$ and
$\mathcal B$ are  unital C$^*$-algebras with a family of
self-adjoint generators $x_1,\ldots, x_n$, and $y_1,\ldots, y_m$
respectively. Suppose $\mathcal D= \mathcal A\bigoplus \mathcal B$
is the orthogonal sum of $\mathcal A$ and $\mathcal B$ with a family
of self-adjoint generators $z_1,\ldots, z_p$. Then,
$$
\frak K_{top}^{(2)}(z_1,\ldots, z_p) \le \frak
K_{top}^{(2)}(x_1,\ldots, x_n)+ \frak K_{top}^{(2)}(y_1,\ldots,
y_m).
$$

\vspace{0.2cm}

Next we show that   topological orbit dimension is always majorized
by   orbit dimension capacity:

\vspace{0.2cm} \noindent{\bf Theorem 4.1: }  Suppose that $\mathcal
A$ is a unital C$^*$-algebra and $x_1,\ldots, x_n$ is a family of
self-adjoint elements in $\mathcal A$. Then
$$
\frak K_{top}^{(2)}(x_1,\ldots, x_n) \le \frak K\frak
K_{2}^{(2)}(x_1,\ldots, x_n),
$$ where $\frak K\frak
K_{2}^{(2)}(x_1,\ldots, x_n)$ is the orbit dimension capacity
defined in Definition 4.1.

 \vspace{0.2cm}

A direct consequence  of Theorem 4.1 is the computation of
topological free entropy dimension of every finite  family of
self-adjoint generators in a nuclear C$^*$-algebra.

 \vspace{0.2cm}
{\noindent \bf Corollary 4.1: } Suppose $\mathcal A$ is a   unital
nuclear C$^*$-algebra with a family of self-adjoint generators
$x_1,\ldots, x_n$. If $\mathcal A$ has the approximation property
in the sense of Definition 4.2, then
$$
\frak K_{top}^{(2)}(x_1,\ldots, x_n)=0 \qquad \text { and } \qquad
\delta_{top}(x_1,\ldots, x_n) \le 1.
$$

 \vspace{0.2cm}

The lower bound of   topological free entropy dimension of a family
of self-adjoint generators of a nuclear C$^*$-algebra depends on the
choice of nuclear algebras. For example,
$\delta_{top}(x_1,\ldots,x_n)$ $=0$ if $x_1,\ldots, x_n$ is a family
of self-adjoint generators of the unitization of C$^*$-algebra of
compact operators (see Theorem 5.6 in \cite {HaSh2}). On the other
hand, $\delta_{top}(x_1,\ldots,x_n)=1$ if $x_1,\ldots, x_n$ is a
family of self-adjoint generators of a UHF algebra (see Theorem 5.4
in \cite{HaSh2}).

More applications of Theorem 4.1 can be found in Corollary 4.2,
Corollary 4.3 and Corollary 4.4.

The last part of the paper is devoted to prove that the topological
free entropy dimension is additive in unital full free products of
unital C$^*$-algebras with the approximation property in the sense
of Definition 4.2, or equivalently in unital full free products of
unital Blackadar and Kirchberg's MF algebras  (See Theorem 5.1). As
a corollary of Theorem 5.1, we obtain the following result, which is
a generalization of an earlier result proved by Voiculescu in
\cite{Voi}.

\vspace{0.2cm} {\noindent {\bf Corollary 5.1: } Suppose that
$\mathcal A_i$ ($i=1,2,\ldots, m$) is a unital C$^*$-algebra
generated by a self-adjoint element $x_i$ in $\mathcal A_i$. Let
$\mathcal D$ be the unital full free product of $\mathcal A_1,
\ldots \mathcal A_n$ equipped with    unital embedding from each
$\mathcal A_i$ into $\mathcal D$. If we identify the element $x_i$
in $\mathcal A_i$ with its image in $\mathcal D$, then
$$
\delta_{top}(x_1,\ldots, x_n) = \sum_{i=1}^n
\delta_{top}(x_i)=n-\sum_{i=1}^n \frac 1 {n_i},
$$ where $n_i$ is the number of elements in the spectrum of
$x_i$ in $\mathcal A_i$ (We use the notation $1/\infty=0$).

The concept of MF algebras was introduced by Blackadar and Kirchberg
in \cite{BK}. This     class of C$^*$-algebras   plays an important
role in the classification of C$^*$-algebras and it is connected to
 Brown, Douglas and Fillmore's extension theory (see the
striking result of Haagerup and Thorbjensen on $Ext(C_r^*(F_2)$). In
the appendix, we show that the unital full free product of two
Blackadar and Kirchberg's separable unital MF  C$^*$-algebras is
again an MF algebra (See Theorem 5.4). Based on Haagerup and
Thorbj{\o}nsen's work on $Ext(C_r^*(F_2) )$, we are able to conclude
that $Ext(C_r^*(F_2)*_{\Bbb C}C_r^*(F_2)) $ is not a group. This
result provides us new example  of C$^*$-algebra whose extension
semigroup is not a group.

\vspace{0.2cm}

 The organization of the paper is as follows. In
section 2, we give the definitions of topological free entropy
dimension and topological orbit dimension of $n$-tuple of elements
in a unital C$^*$-algebra. Some properties of topological orbit
dimension are discussed in section 3. In section 4, we introduce the
concept of orbit dimensional capacity and discuss its application in
the computations of   topological orbit dimension in finitely
generated nuclear C$^*$ algebras and several other classes of unital
C$^*$-algebras. In section 5, we prove that  topological free
entropy dimension is additive in  unital full free products of some
unital C$^*$ algebras. In the appendix, we show that the unital full
free product of two   MF algebras is again an MF algebra and
$Ext(C_r^*(F_2)*_{\Bbb C}C_r^*(F_2)) $ is not a group.

\section{Definitions  and preliminary}

In this section, we are going to recall Voiculescu's definition of
  topological free entropy dimension of $n$-tuple of elements in
a unital C$^*$-algebra and give the definition of topological
orbit dimension of $n$-tuple of elements in a unital
C$^*$-algebra.
\subsection{A Covering    of  a set  in a metric space}

Suppose $(X,d)$ is a metric space and $K$ is a subset of $X$. A
family of balls in $X$  is called a covering of $K$ if the union of
these balls covers $K$ and the centers of these balls lie in $K$.

\subsection{Covering numbers in  complex matrix algebra
$(\mathcal{M}_{k}(\mathbb{C}))^n$}
 Let $\mathcal{M}_{k}(\mathbb{C})$ be the $k\times k$ full matrix
algebra with entries in $\mathbb{C}$,  and $\tau_{k}$ be the
  normalized trace on $\mathcal{M}_{k}(\mathbb{C})$, i.e.,
  $\tau_{k}=\frac{1}{k}Tr$, where $Tr$ is the usual trace on
  $\mathcal{M}_{k}(\mathbb{C})$.
 Let $\mathcal{U}(k)$ denote the group
of all unitary matrices in $\mathcal{M}_{k}(\mathbb{C})$. Let
$\mathcal{M}_{k}(\mathbb{C})^{n}$ denote the direct sum of $n$
copies of $\mathcal{M}_{k}(\mathbb{C})$.  Let $\mathcal
M_k^{s.a}(\Bbb C)$ be the subset of $\mathcal M_k(\Bbb C)$
consisting of all self-adjoint matrices of $\mathcal M_k(\Bbb C)$.
Let $(\mathcal M_k^{s.a}(\Bbb C))^n$ be the direct sum (or
orthogonal sum) of $n$ copies of $\mathcal M_k^{s.a}(\Bbb C)$. Let
$\|\cdot \|$ be the operator norm  on
  $\mathcal{M}_{k}(\mathbb{C})^{n}$ defined by
  \[
  \Vert(A_{1},\ldots,A_{n})\Vert =  \max\{\|A_1\|,\ldots, \|A_n\| \}    \]
  for all $(A_{1},\ldots,A_{n})$ in $\mathcal{M}_{k}(\mathbb{C})^{n}$.
  Let $\Vert\cdot\Vert_{2}$
  denote the trace norm induced by $\tau_{k}$ on
  $\mathcal{M}_{k}(\mathbb{C})^{n}$, i.e.,
  \[
  \Vert(A_{1},\ldots,A_{n})\Vert_{2} =\sqrt{\tau_{k}(A_{1}^{\ast}A_{1})+\ldots
  +\tau_{k}(A_{n}^{\ast}A_{n})}
  \]
  for all $(A_{1},\ldots,A_{n})$ in $\mathcal{M}_{k}(\mathbb{C})^{n}$.

For every $\omega>0$, we define the $\omega$-$\|\cdot\|$-ball
$Ball_{}(B_{1},\ldots ,B_{n};\omega,  \|\cdot\|)$ centered at
$(B_{1},\ldots,B_{n})$ in $\mathcal{M}_{k}(\mathbb{C})^{n}$ to be
the subset of $\mathcal{M}_{k}(\mathbb{C})^{n}$   consisting of all
$(A_{1},\ldots,A_{n})$ in $\mathcal{M}_{k}(\mathbb{C})^{n}$ such
that $$\Vert(A_{1},\ldots,A_{n})-(B_{1},\ldots,B_{n})\Vert
<\omega.$$

\begin{definition}
Suppose that $\Sigma$ is a subset of
$\mathcal{M}_{k}(\mathbb{C})^{n}$. We define the covering number
$\nu_\infty(\Sigma, \omega)$ to be the minimal number of
$\omega$-$\|\cdot\|$-balls that constitute a covering of $\Sigma$ in
$\mathcal{M}_{k}(\mathbb{C})^{n}$.
\end{definition}

For every $\omega>0$, we define the $\omega$-$\|\cdot\|_2$-ball
$Ball_{}(B_{1},\ldots ,B_{n};\omega,  \|\cdot\|_2)$ centered at
$(B_{1},\ldots,B_{n})$ in $\mathcal{M}_{k}(\mathbb{C})^{n}$ to be
the subset of $\mathcal{M}_{k}(\mathbb{C})^{n}$   consisting of all
$(A_{1},\ldots,A_{n})$ in $\mathcal{M}_{k}(\mathbb{C})^{n}$ such
that $$\Vert(A_{1},\ldots,A_{n})-(B_{1},\ldots,B_{n})\Vert_2
<\omega.$$

\begin{definition}
Suppose that $\Sigma$ is a subset of
$\mathcal{M}_{k}(\mathbb{C})^{n}$. We define the covering number
$\nu_2(\Sigma, \omega)$ to be the minimal number of
$\omega$-$\|\cdot\|_2$-balls that constitute a covering of $\Sigma$
in $\mathcal{M}_{k}(\mathbb{C})^{n}$.
\end{definition}

\subsection{Unitary orbits of balls in $\mathcal M_k(\Bbb C)^n$}



\quad For every $\omega>0$, we define the
$\omega$-orbit-$\|\cdot\|_2$-ball $\mathcal U(B_{1},\ldots
,B_{n};\omega, \|\cdot\|_2)$ centered at $(B_{1},\ldots,B_{n})$ in
$\mathcal{M}_{k}(\mathbb{C})^{n}$ to be the subset of
$\mathcal{M}_{k}(\mathbb{C})^{n}$   consisting of all
$(A_{1},\ldots,A_{n})$ in $\mathcal{M}_{k}(\mathbb{C})^{n}$ such
that there exists a unitary matrix $W$ in $\mathcal{U}(k)$
satisfying
$$\Vert(A_{1},\ldots,A_{n})-(WB_{1}W^*,\ldots,WB_{n}W^*)\Vert_2
<\omega.$$

\begin{definition}
Suppose that $\Sigma$ is a subset of
$\mathcal{M}_{k}(\mathbb{C})^{n}$. We define the covering number
$o_2(\Sigma, \omega)$ to be the minimal number of
$\omega$-orbit-$\|\cdot\|_2$-balls that constitute a covering of
$\Sigma$ in $\mathcal{M}_{k}(\mathbb{C})^{n}$.
\end{definition}

\subsection{Noncommutative polynomials}
In this article, we always assume that $\mathcal A$ is a unital
C$^*$-algebra. Let $x_1,\ldots, x_n, y_1,\ldots, y_m$ be
self-adjoint elements in $\mathcal A$. Let $\Bbb C\langle
X_1,\ldots, X_n, Y_1,\ldots,Y_m\rangle $ be the set of all
noncommutative polynomials in the indeterminates $X_1,\ldots, X_n,
Y_1,\ldots,Y_m$. Let $\{P_r\}_{r=1}^\infty$ be the collection of all
noncommutative polynomials in $\Bbb C\langle X_1,\ldots, X_n,
Y_1,\ldots,Y_m\rangle $ with rational complex coefficients. (Here
``rational complex coefficients" means that the real   and imaginary
parts of all coefficients of $P_r$ are rational numbers).

\begin{remark}
   We alsways assume that $1\in  \Bbb C\langle
X_1,\ldots, X_n, Y_1,\ldots,Y_m\rangle $.
\end{remark}

\subsection{Voiculescu's norm-microstates Space}
For all integers $r, k\ge 1$, real numbers $R, \epsilon>0$ and
noncommutative polynomials $P_1,\ldots, P_r$, we define
$$
\Gamma^{(top)}_R(x_1,\ldots, x_n, y_1,\ldots, y_m; k,\epsilon,
P_1,\ldots, P_r)
$$ to be the subset of $(\mathcal M_k^{s.a}(\Bbb C))^{n+m}$
consisting of all these $$ (A_1,\ldots, A_n, B_1,\ldots, B_m)\in
(\mathcal M_k^{s.a}(\Bbb C))^{n+m}
$$ satisfying
 $$  \max \{\|A_1\|, \ldots, \|A_n\|, \|B_1\|,\ldots, \|B_m\|\}\le R
 $$ and
$$
|\|P_j(A_1,\ldots, A_n,B_1,\ldots,B_m)\|-\| P_j(x_1,\ldots,
x_n,y_1,\ldots,y_m)\||\le \epsilon, \qquad \forall \ 1\le j\le r.
$$



Define the norm-microstates space of $x_1,\ldots,x_n$ in the
presence of $y_1,\ldots, y_m$, denoted by
$$
\Gamma^{(top)}_R(x_1,\ldots, x_n: y_1,\ldots, y_m; k,\epsilon,
P_1,\ldots, P_r),
$$ to be the projection of $
\Gamma^{(top)}_R(x_1,\ldots, x_n, y_1,\ldots, y_m; k,\epsilon,
P_1,\ldots, P_r) $ onto the space $(\mathcal M_k^{s.a}(\Bbb C))^n$
via the mapping
$$
(A_1,\ldots, A_n, B_1,\ldots, B_m) \rightarrow (A_1,\ldots, A_n).
$$

\subsection{Voiculescu's topological free entropy dimension (see \cite{Voi})}

Define
$$
\nu_\infty(\Gamma^{(top)}_R(x_1,\ldots, x_n: y_1,\ldots, y_m;
k,\epsilon, P_1,\ldots, P_r),\omega)
$$ to be the covering number of the set $
\Gamma^{(top)}_R(x_1,\ldots, x_n: y_1,\ldots, y_m; k,\epsilon,
P_1,\ldots, P_r) $ by $\omega$-$\| \cdot\|$-balls in the metric
space $(\mathcal M_k^{s.a}(\Bbb C))^n$ equipped with operator norm.

\begin{definition}Define
$$
  \begin{aligned}
     \delta_{top}(x_1,\ldots, & \ x_n: y_1,\ldots, y_m;  \omega)\\
     & =
     \sup_{R>0} \ \inf_{\epsilon>0, r\in \Bbb N}  \ \limsup_{k\rightarrow\infty} \frac {\log(\nu_\infty(\Gamma^{(top)}_R(x_1,
     \ldots, x_n: y_1,\ldots, y_m; k,\epsilon, P_1,\ldots,
P_r),\omega))}{-k^2\log\omega}\\ \end{aligned}
$$

\noindent {\bf The topological free entropy dimension } of
$x_1,\ldots, x_n$ in the presence of $y_1,\ldots, y_m$ is defined by
$$ \delta_{top}(x_1,\ldots, \ x_n: y_1,\ldots, y_m)=
 \limsup_{\omega\rightarrow 0^+}  \delta_{top}(x_1,\ldots,   x_n:y_1,\ldots, y_m;  \omega)
$$
\end{definition}
\begin{remark} Let $R>
\max\{\|x_1\|,\ldots,\|x_n\|,\|y_1\|,\ldots,\|y_m\|\}$ be a positive
number. By definition, we know
$$
  \begin{aligned}
     \delta_{top}(x_1,\ldots, & \ x_n: y_1,\ldots, y_m )\\
     & =
       \limsup_{\omega\rightarrow 0^+}  \inf_{\epsilon>0, r\in \Bbb N}  \ \limsup_{k\rightarrow\infty} \frac
       {\log(\nu_\infty(\Gamma^{(top)}_R(x_1,
     \ldots, x_n: y_1,\ldots, y_m; k,\epsilon, P_1,\ldots,
P_r),\omega))}{-k^2\log\omega}\\ \end{aligned}
$$
\end{remark}

\subsection{Topological orbit  dimension $\frak K_{top}^{(2)}$}
Define
$$
o_2(\Gamma^{(top)}_R(x_1,\ldots, x_n: y_1,\ldots, y_m; k,\epsilon,
P_1,\ldots, P_r),\omega)
$$ to be the covering number of the set $
\Gamma^{(top)}_R(x_1,\ldots, x_n: y_1,\ldots, y_m; k,\epsilon,
P_1,\ldots, P_r) $ by $\omega$-orbit-$\| \cdot\|_2$-balls in the
metric space $(\mathcal M_k^{s.a}(\Bbb C))^n$ equipped with the
  trace norm.

\begin{definition}Define
$$
  \begin{aligned}
     \frak K_{top}^{(2)}(x_1,\ldots, & \ x_n: y_1,\ldots, y_m;  \omega)\\
     & =
     \sup_{R>0} \ \inf_{\epsilon>0, r\in \Bbb N}  \ \limsup_{k\rightarrow\infty} \frac {\log(o_2(\Gamma^{(top)}_R(x_1,
     \ldots, x_n: y_1,\ldots, y_m; k,\epsilon, P_1,\ldots,
P_r),\omega))}{k^2}\\ \end{aligned}
$$
\begin{remark}
The value of $ \frak K_{top}^{(2)}(x_1,\ldots,  \ x_n: y_1,\ldots,
y_m; \omega)$ increases as $\omega$ decreases.
\end{remark}

\noindent {\bf The topological orbit dimension } of $x_1,\ldots,
x_n$ in the presence of $y_1,\ldots, y_m$ is defined by
$$ \frak K_{top}^{(2)}(x_1,\ldots, \ x_n: y_1,\ldots, y_m)=
 \limsup_{\omega\rightarrow 0^+}  \frak K_{top}^{(2)}(x_1,\ldots,   x_n:y_1,\ldots, y_m;  \omega)
$$
\end{definition}
\begin{remark} In the notation $ \frak K_{top}^{(2)}$, the
subscript ``$top$"  stands for  the norm-microstates space and
superscript ``$(2)$" stands for the using of
unitary-orbit-$\|\cdot\|_2$-balls when counting the covering numbers
of the norm-microstates spaces.
\end{remark}
\begin{remark} Let $R>
\max\{\|x_1\|,\ldots,\|x_n\|,\|y_1\|,\ldots,\|y_m\|\}$ be a positive
number. By definition, we know
$$
  \begin{aligned}
     \frak K_{top}^{(2)}(x_1,\ldots, & \ x_n: y_1,\ldots, y_m )\\
     & =
       \limsup_{\omega\rightarrow 0^+}  \inf_{\epsilon>0, r\in \Bbb N}  \ \limsup_{k\rightarrow\infty} \frac
       {\log(o_2(\Gamma^{(top)}_R(x_1,
     \ldots, x_n: y_1,\ldots, y_m; k,\epsilon, P_1,\ldots,
P_r),\omega))}{k^2}\\ \end{aligned}
$$
\end{remark}

\subsection{C$^*$-algebra ultraproducts}

Suppose $\{\mathcal M_{k_m}(\Bbb C)\}_{m=1}^\infty$ is a sequence of
complex matrix algebras where $k_m$ goes to infinity as $m$ goes to
infinity. Let $\gamma$ be a free ultrafilter in $\beta(\Bbb
N)\setminus \Bbb N$. We can introduce a unital C$^*$-algebra
$\prod_{m=1}^\infty \mathcal M_{k_m}(\Bbb C)$ as follows:
$$
\prod_{m=1}^\infty \mathcal M_{k_m}(\Bbb C) = \{(Y_m)_{m=1}^\infty \
| \ \forall \ m\ge 1, \ Y_m \in \mathcal M_{k_m}(\Bbb C) \ \text {
and } \ \sup_{m\ge 1} \| Y_m\|<\infty\}.
$$
We can also introduce a  norm   closed two sided ideal  $\mathcal
I_\infty$  as follows.
$$
  \begin{aligned}
    \mathcal I_\infty & = \{(Y_m)_{m=1}^\infty\in  \prod_{m=1}^\infty \mathcal M_{k_m}(\Bbb C) \
| \ \lim_{m\rightarrow \gamma } \|Y_m\| =0\}
  \end{aligned}
$$

\begin{definition}
The C$^*$-algebra ultraproduct of $\{\mathcal M_{k_m}(\Bbb
C)\}_{m=1}^\infty$ along the ultrfilter $\gamma$, denoted by
$\prod_{m=1}^\gamma \mathcal M_{k_m}(\Bbb C)$, is defined to be the
quotient
  algebra,
$\prod_{m=1}^\infty \mathcal M_{k_m}(\Bbb C)/\mathcal I_\infty$,
of $\prod_{m=1}^\infty \mathcal M_{k_m}(\Bbb C)$ by the ideal
$\mathcal I_\infty$. The image of $(Y_m)_{m=1}^\infty\in
\prod_{m=1}^\infty \mathcal M_{k_m}(\Bbb C)$ in
$\prod_{m=1}^\gamma \mathcal M_{k_m}(\Bbb C)$ is denoted by
$[(Y_m)_{m}]$.
\end{definition}

\section{Properties of  topological orbit dimension $\frak K_{top}^{(2)}$}
In this section, we are going to discuss  properties of the
topological orbit dimension  $\frak K_{top}^{(2)}$.

The following result explains the relationship between Voiculescu's
topological free entropy dimension and topological orbit dimension
of $n$-tuple of elements in a unital C$^*$-algebra.

\begin{lemma}
Suppose that $\mathcal A$ is a unital C$^*$-algebra and
$x_1,\ldots,x_n$ is a family of self-adjoint elements in $\mathcal
A$. If
$$
 \frak K_{top}^{(2)}(x_1,\ldots, x_n)<\infty,
$$ then
$$
\delta_{top}(x_1,\ldots,x_n)\le 1.
$$

\end{lemma}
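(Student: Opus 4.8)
The plan is to turn the hypothesis ``$o_2$ of the norm-microstates space grows no faster than $e^{Mk^2}$'' into an operator-norm covering bound of the shape $\nu_\infty(\Gamma,\omega)\le e^{o(k^2)}\,\omega^{-k^2(1+o_\omega(1))}$, which by the definition of $\delta_{top}$ forces $\delta_{top}\le1$. Fix $\omega\in(0,1)$, set $\delta=\omega^2$ and $R_0=\max_i\|x_i\|+1$. Since $\frak K_{top}^{(2)}(x_1,\ldots,x_n;\cdot)$ increases as its argument decreases, the hypothesis gives a finite $M$ with $\frak K_{top}^{(2)}(x_1,\ldots,x_n;\delta)\le M$. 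Taking $R=R_0$ in the defining supremum, I can then fix $\epsilon_1\in(0,1)$ and $r_1\in\Bbb N$, with $X_1,\ldots,X_n$ among $P_1,\ldots,P_{r_1}$, so that, writing $\Gamma_k:=\Gamma^{(top)}_{R_0}(x_1,\ldots,x_n;k,\epsilon_1,P_1,\ldots,P_{r_1})$, one has $\limsup_{k}k^{-2}\log o_2(\Gamma_k,\delta)\le M+1$. Because $X_i$ is a test polynomial and $\epsilon_1<1$, every $(A_1,\ldots,A_n)\in\Gamma_k$ has $\|A_i\|<R_0$ for all $i$; hence $\Gamma_k=\Gamma^{(top)}_R(x_1,\ldots,x_n;k,\epsilon_1,P_1,\ldots,P_{r_1})$ for every $R\ge R_0$, and $\Gamma^{(top)}_R(\ldots;k,\epsilon_1,P_1,\ldots,P_{r_1})\subseteq\Gamma_k$ for $R<R_0$, which neutralizes the outer $\sup_R$.

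The heart of the argument is an operator-norm covering estimate for a single orbit-$\|\cdot\|_2$-ball $\mathcal U(B_1,\ldots,B_n;\delta,\|\cdot\|_2)$ with all $\|B_i\|<R_0$, using three ingredients. (i) By the standard metric-entropy bound for the unitary group, $\mathcal U(k)$ has an operator-norm $\rho$-net of size at most $(C/\rho)^{k^2}$ with $C$ absolute; conjugating each $B_i$ by a $\rho$-close unitary moves it by at most $2R_0\rho$ in operator norm, so the orbit $\{(WB_1W^*,\ldots,WB_nW^*):W\in\mathcal U(k)\}$ is covered by $(C/\rho)^{k^2}$ operator-norm $2R_0\rho$-balls. (ii) A point of the orbit-ball is $(WB_iW^*)_i+(E_i)_i$ with $\sum_i\|E_i\|_2^2<\delta^2$; splitting each self-adjoint $E_i$ at spectral level $t$ gives $E_i=S_i+L_i$ with $\|L_i\|\le t$, $\|S_i\|\le 2R_0$, and, by counting eigenvalues exceeding $t$, $\sum_i\operatorname{rank}(S_i)\le k\delta^2/t^2$. (iii) The set of $n$-tuples of self-adjoint matrices $(S_i)$ with $\sum_i\operatorname{rank}(S_i)\le m:=\lceil k\delta^2/t^2\rceil$ and $\|S_i\|\le 2R_0$ has an operator-norm $\rho$-net of size at most $(C'R_0/\rho)^{2mk}$ (covering the relevant Grassmannians together with the small self-adjoint blocks). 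Choosing $t$ and both net-scales $\rho$ all of order $\omega$ — so that the three error sources add up to less than $\omega$, and $\delta=\omega^2$ makes $m\le 10\omega^2k$ once $k$ is large — assembles (i)--(iii) into a covering of the orbit-ball by at most $(C_1R_0/\omega)^{k^2}(C_2R_0/\omega)^{c_0\omega^2 k^2}$ operator-norm $\omega$-balls, for absolute $c_0,C_1,C_2$.

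Covering $\Gamma_k$ first by $o_2(\Gamma_k,\delta)$ orbit-$\|\cdot\|_2$-$\delta$-balls centered in $\Gamma_k$, then each of these as above, and finally (routinely) replacing balls whose centers may not lie in $\Gamma_k$ by balls of twice the radius that do, gives
$$
\log\nu_\infty(\Gamma_k,2\omega)\ \le\ \log o_2(\Gamma_k,\delta)\ +\ k^2\log\frac{C_1R_0}{\omega}\ +\ c_0\omega^2k^2\log\frac{C_2R_0}{\omega};
$$
the same bound holds for $\nu_\infty(\Gamma^{(top)}_R(\ldots;k,\epsilon_1,P_1,\ldots,P_{r_1}),2\omega)$ for every $R$, by the first paragraph. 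Dividing by $-k^2\log(2\omega)$, taking $\limsup_k$ (with $k^{-2}\log o_2(\Gamma_k,\delta)\le M+1$), then $\inf_{\epsilon,r}$ and $\sup_R$, we obtain
$$
\delta_{top}(x_1,\ldots,x_n;2\omega)\ \le\ \frac{M+1}{|\log 2\omega|}+\frac{\log(C_1R_0)+|\log\omega|}{|\log 2\omega|}+c_0\omega^2\cdot\frac{\log(C_2R_0)+|\log\omega|}{|\log 2\omega|}.
$$
Letting $\omega\to0^{+}$, the right-hand side tends to $0+1+0=1$, so $\delta_{top}(x_1,\ldots,x_n)=\limsup_{\omega\to0^{+}}\delta_{top}(x_1,\ldots,x_n;\omega)\le1$.

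I expect the only genuine difficulty to be ingredient (ii) combined with the sharpness of (i). The tempting shortcut is to bound an orbit-$\|\cdot\|_2$-$\delta$-ball directly by operator-norm balls via $\|A\|\le\sqrt{k}\,\|A\|_2$; but a trace-norm $\delta$-ball can really have operator-norm radius of order $\sqrt{k}\,\delta$, which injects a term of order $k^2\log k$ into the exponent and destroys the estimate, so the rank/spectral splitting is essential to confine the trace-norm fattening to an $o(k^2)$-dimensional ``extra'' direction once $\delta$ is a small fixed power of $\omega$. Likewise one must use the bound $(C/\rho)^{k^2}$ for $\mathcal U(k)$, whose real dimension is $k^2$, since the cruder $(C/\rho)^{2k^2}$ would only yield $\delta_{top}\le 2$. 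Everything else is careful bookkeeping of the nested $\sup_R/\inf_{\epsilon,r}/\limsup_k$ and of the additive error budget among the three coverings.
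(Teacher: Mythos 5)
Your argument is correct and follows essentially the same route as the paper: in both, the decisive step is Szarek's $(C/\rho)^{k^2}$ operator-norm net for $\mathcal U(k)$, which converts the finite $o_2$-bound coming from $\frak K_{top}^{(2)}(x_1,\ldots,x_n)<\infty$ into a covering-number bound whose dominant factor $(CR/\omega)^{k^2}$ contributes exactly $1$ to the dimension. The only difference is one of packaging: the paper passes from orbit-$\|\cdot\|_2$-balls to $\|\cdot\|_2$-balls and then invokes the known equivalence of the $\|\cdot\|_2$-ball and operator-norm-ball versions of $\delta_{top}$ (the remark in Section 6 of \cite{Voi}, Proposition 5.1 in \cite{HaSh2}), whereas you prove that conversion in place via the spectral truncation into a small-norm part plus a low-rank part covered by Grassmannian-type nets, with $\delta=\omega^2$ keeping the extra factor at $e^{o(k^2\log(1/\omega))}$ --- which is precisely the content of the cited result.
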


\begin{proof}  Let $\{P_r\}_{r=1}^\infty$ be the collection of all
noncommutative polynomials in $\Bbb C\langle X_1,\ldots, X_n\rangle
$ with rational complex coefficients.
 For any
$0<\omega<1/10$, $R>\max\{\|x_1\|,\ldots, \|x_n\|\}$, by Remark 2.3
we know that
$$
\inf_{r\in \Bbb N, \ \epsilon>0} \ \limsup_{k\rightarrow\infty}
\frac {\log(o_2(\Gamma_R^{(top)}(x_1,\ldots, x_n,k,\epsilon,
P_1,\ldots,P_r),\omega))}{-k^2\log\omega}
 \le \frak K_{top}^{(2)}(x_1,\ldots,
x_n) \cdot \frac 1 {-\log\omega}.
$$
By Szarek's results in \cite{Sza}, there is a family of unitary
matrices $\{U_\lambda\}_{\lambda\in\Lambda}$ in $\mathcal U(k)$ such
that (i) $\{Ball(U_\lambda; \frac{\omega}R,\|\cdot
\|)\}_{\lambda\in\Lambda}$ is a covering of $\mathcal U(k)$; and
(ii) the cardinality of $\Lambda$, $|\Lambda|\le \left
(\frac{CR}{\omega}\right)^{k^2}$ where $C$ is a constant independent
of $k, \omega$. Thus from relationship between covering number (see
Definition 2.2) and unitary orbit covering number (see Definition
2.3), we have
$$\begin{aligned}
&\inf_{r\in \Bbb N, \ \epsilon>0} \  \limsup_{k\rightarrow\infty}
\frac {\log(\nu_2(\Gamma_R^{(top)}(x_1,\ldots, x_n,k,\epsilon,
P_1,\ldots,P_r),3\omega))}{-k^2\log\omega}\\
 & \qquad \qquad \quad\le \inf_{r\in \Bbb N, \ \epsilon>0} \ \limsup_{k\rightarrow\infty}
 \frac {\log(o_2(\Gamma_R^{(top)}(x_1,\ldots, x_n,k,\epsilon, P_1,\ldots,P_r),\omega)\cdot \left(\frac {CR}{  \ \omega}  \right)^{k^2})}{-
 k^2\log\omega
 }\\
  & \qquad \qquad \quad\le 1+ \frac {\log C  +\log R}{-\log\omega}+\frak K_{top}^{(2)}(x_1,\ldots,
x_n) \cdot \frac 1 {-\log\omega}.
   \end{aligned}
$$
Now the result of the theorem follows directly from the definitions
of the topological free entropy dimension and the topological orbit
dimension, together with Remark 2.2, Remark 2.5 and the remark in
Section 6 of \cite{Voi} (or Proposition 5.1 in \cite{HaSh2}).

\end{proof}

A direct consequence of the preceding lemma is the following
theorem.
\begin{theorem}
Suppose that $\mathcal A$ is a unital C$^*$-algebra and
$x_1,\ldots,x_n$ is a family of self-adjoint elements in $\mathcal
A$. Then
$$
\delta_{top}(x_1,\ldots,x_n)\le \max\{\frak
K_{top}^{(2)}(x_1,\ldots, x_n),1\}.
$$
In particular, if
$$
\frak K_{top}^{(2)}(x_1,\ldots, x_n)=0,
$$ then
$$
\delta_{top}(x_1,\ldots,x_n)\le 1.
$$
\end{theorem}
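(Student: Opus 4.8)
The plan is to obtain Theorem 3.1 as an immediate dichotomy consequence of Lemma 3.1; there is essentially no new work to do beyond a case split on the value of $\frak K_{top}^{(2)}(x_1,\ldots,x_n)$, which a priori lies in $[0,+\infty]$. First I would dispose of the case $\frak K_{top}^{(2)}(x_1,\ldots,x_n)=+\infty$: here $\max\{\frak K_{top}^{(2)}(x_1,\ldots,x_n),1\}=+\infty$, so the asserted inequality $\delta_{top}(x_1,\ldots,x_n)\le \max\{\frak K_{top}^{(2)}(x_1,\ldots,x_n),1\}$ holds vacuously (one may additionally recall the standard bound $\delta_{top}(x_1,\ldots,x_n)\le n$, but this is not needed).

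Next I would handle the case $\frak K_{top}^{(2)}(x_1,\ldots,x_n)<+\infty$. This is exactly the hypothesis of Lemma 3.1, so that lemma applies verbatim and yields $\delta_{top}(x_1,\ldots,x_n)\le 1$, whence $\delta_{top}(x_1,\ldots,x_n)\le 1\le \max\{\frak K_{top}^{(2)}(x_1,\ldots,x_n),1\}$. Combining the two cases gives the displayed inequality in full generality. For the ``in particular'' assertion, observe that $\frak K_{top}^{(2)}(x_1,\ldots,x_n)=0$ falls under the second case (it is finite), so Lemma 3.1 gives $\delta_{top}(x_1,\ldots,x_n)\le 1$ directly; equivalently, one substitutes $\frak K_{top}^{(2)}(x_1,\ldots,x_n)=0$ into the inequality just proved and uses $\max\{0,1\}=1$.

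I do not anticipate any genuine obstacle. The only point requiring care is purely notational: the bound $\max\{\cdot,1\}$ evaluated at $+\infty$ is $+\infty$, which is what makes the statement vacuous in the first case, and the hypothesis $\frak K_{top}^{(2)}(x_1,\ldots,x_n)<\infty$ in Lemma 3.1 is precisely the dividing line between the two cases, so no interpolation or quantitative refinement of Lemma 3.1 is required.
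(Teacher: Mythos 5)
Your proposal is correct and matches the paper's treatment: the paper gives no separate proof of Theorem 3.1, presenting it as a direct consequence of Lemma 3.1, which is exactly the dichotomy you describe (the infinite case is vacuous, the finite case is Lemma 3.1 verbatim). The only cosmetic remark is that the paper later notes $\frak K_{top}^{(2)}$ takes values in $\{-\infty\}\cup[0,\infty]$ rather than $[0,\infty]$, but the value $-\infty$ falls under your finite case and changes nothing.
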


The following lemma will be needed in the proof of   Theorem 3.2.
\begin{lemma}
Let $x_{1},\ldots,x_{n},y_{1},\ldots,y_{p}$ be self-adjoint elements
in a unital C$^*$-algebra $\mathcal A$. If $y_{1},\ldots,y_{p}$ are
in the C$^*$ subalgebra generated by $x_{1},\ldots,x_{n}$ in
$\mathcal{A}$, then, for every $\omega>0 ,$
\[
\mathfrak{K}_{top}^{(2)}(x_{1},\ldots,x_{n};4\omega)\le\mathfrak{K}_{top}^{(2)}(x_{1},\ldots,x_{n}%
:y_{1},\ldots,y_{p};2\omega)\le \mathfrak{K}_{top}^{(2)}(x_{1},\ldots,x_{n};\omega)\mathfrak{.}%
\]

\end{lemma}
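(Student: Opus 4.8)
The plan is to establish the two inequalities separately, both by analyzing how a covering of the microstates space by orbit-$\|\cdot\|_2$-balls behaves under the correspondence between $(A_1,\ldots,A_n)$ and $(A_1,\ldots,A_n,B_1,\ldots,B_p)$, where $B_j$ is obtained by evaluating at $(A_1,\ldots,A_n)$ the noncommutative polynomial (or, more precisely, a polynomial approximation) that represents $y_j$ in the C$^*$-algebra generated by $x_1,\ldots,x_n$. The right-hand inequality $\mathfrak{K}_{top}^{(2)}(x_1,\ldots,x_n:y_1,\ldots,y_p;2\omega)\le\mathfrak{K}_{top}^{(2)}(x_1,\ldots,x_n;\omega)$ is the easier direction: I would fix $R$ large, and given an orbit-$\|\cdot\|_2$-$\omega$-covering $\{\mathcal U(\Xi^{(\lambda)};\omega,\|\cdot\|_2)\}$ of $\Gamma_R^{(top)}(x_1,\ldots,x_n;k,\epsilon,P_1,\ldots,P_r)$, show that projecting to the $x$-coordinates sends the microstates space $\Gamma_R^{(top)}(x_1,\ldots,x_n:y_1,\ldots,y_p;k,\epsilon',Q_1,\ldots,Q_s)$ (for a suitable, larger finite set of polynomials and smaller $\epsilon'$ depending on the chosen polynomial approximations of the $y_j$) into that space. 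Since the forgetful projection to the $x$-coordinates does not increase $\|\cdot\|_2$-distance and commutes with the simultaneous conjugation by $W$, the same centers $\Xi^{(\lambda)}$ give an orbit-$\omega$-covering of the projected microstates, hence $o_2(\Gamma^{(top)}(x_1,\ldots,x_n:y_1,\ldots,y_p),\omega)\le o_2(\Gamma^{(top)}(x_1,\ldots,x_n),\omega)$ for compatible parameters; taking $\limsup_k$, then $\inf_{\epsilon,r}$, then $\sup_R$, and finally $\limsup_{\omega\to 0^+}$ on both sides yields the claim.

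For the left-hand inequality $\mathfrak{K}_{top}^{(2)}(x_1,\ldots,x_n;4\omega)\le\mathfrak{K}_{top}^{(2)}(x_1,\ldots,x_n:y_1,\ldots,y_p;2\omega)$, I would go the other way. Given an orbit-$2\omega$-covering of $\Gamma_R^{(top)}(x_1,\ldots,x_n:y_1,\ldots,y_p;k,\epsilon,P_1,\ldots,P_r)$ with centers $(\Xi^{(\lambda)})$ in $(\mathcal M_k^{s.a})^n$, I claim these same centers form an orbit-$4\omega$-covering of $\Gamma_R^{(top)}(x_1,\ldots,x_n;k,\epsilon',Q_1,\ldots,Q_s)$ once $\epsilon'$ and $\{Q_i\}$ are chosen finely enough. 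The point is that if $(A_1,\ldots,A_n)$ is a microstate for the $x$'s alone with sufficiently good precision, then setting $B_j:=q_j(A_1,\ldots,A_n)$ for fixed polynomials $q_j$ with $\|q_j(x)-y_j\|$ small, the tuple $(A_1,\ldots,A_n,B_1,\ldots,B_p)$ satisfies the polynomial norm-conditions defining $\Gamma_R^{(top)}(x_1,\ldots,x_n,y_1,\ldots,y_p;k,\epsilon,P_1,\ldots,P_r)$—so $(A_1,\ldots,A_n)$ lies in the projected microstates space and is within orbit-$2\omega$ of some $\Xi^{(\lambda)}$ in the $\|\cdot\|_2$-norm on the $x$-coordinates. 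Absorbing the polynomial-approximation error (which is a norm error, hence dominates the $\|\cdot\|_2$-error on bounded matrices up to the uniform norm bound $R$) into an extra $2\omega$ gives the orbit-$4\omega$ covering. Then again pass to $\limsup_k$, $\inf_{\epsilon',s}$, $\sup_R$, and $\limsup_{\omega\to 0^+}$.

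The main obstacle, and the place requiring care, is the bookkeeping of quantifiers: one must verify that for a fixed finite family $Q_1,\ldots,Q_s$ and tolerance $\epsilon'$ appearing in the definition of the $x$-only microstates, there is a choice of polynomial approximants $q_j$ of $y_j$ and a finite family $P_1,\ldots,P_r$ with tolerance $\epsilon$ in the joint microstates such that the implication above holds for all large $k$; this is a standard continuity-of-polynomial-evaluation argument, but it has to be threaded correctly through the $\sup_R\inf_{\epsilon,r}\limsup_k$ structure so that the inequalities survive in the right order. A secondary subtlety is that in the definition of $\mathfrak{K}_{top}^{(2)}$ one uses $R$-bounded microstates, so the approximants $q_j(A_1,\ldots,A_n)$ must themselves be (nearly) $R$-bounded; this is handled by first enlarging $R$ slightly for the joint space and noting that $\|q_j(A)\|$ is close to $\|q_j(x)\|\approx\|y_j\|\le R$ whenever $A$ is a good enough microstate, or alternatively by cutting with a continuous functional calculus as in the standard treatment. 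Once these points are pinned down, the passage $4\omega,2\omega,\omega\mapsto$ the limits as $\omega\to 0^+$ collapses the factors and one reads off the displayed inequality, and in fact equality of $\mathfrak{K}_{top}^{(2)}(x_1,\ldots,x_n)$ with $\mathfrak{K}_{top}^{(2)}(x_1,\ldots,x_n:y_1,\ldots,y_p)$, which is exactly what is needed for Theorem 3.2.
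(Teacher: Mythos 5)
Your overall route is the same as the paper's, which is a straightforward adaptation of Prop.\ 1.6 of \cite{V3}: for suitable parameters one establishes the two inclusions
$\Gamma_R^{(top)}(x_1,\ldots,x_n;k,\epsilon_1,Q_1,\ldots,Q_{s_1})\subseteq\Gamma_R^{(top)}(x_1,\ldots,x_n:y_1,\ldots,y_p;k,\epsilon,P_1,\ldots,P_r)$ (by setting $B_j=\psi_j(A_1,\ldots,A_n)$ for polynomials $\psi_j$ approximating $y_j$, the bound $R>\max_i\|x_i\|+\max_j\|y_j\|$ taking care of $\|B_j\|\le R$) and
$\Gamma_R^{(top)}(x_1,\ldots,x_n:y_1,\ldots,y_p;k,\epsilon_2,P_1,\ldots,P_{r_1})\subseteq\Gamma_R^{(top)}(x_1,\ldots,x_n;k,\epsilon,Q_1,\ldots,Q_s)$ (trivial, since polynomials in the $X$'s alone occur among the $P$'s), and then compares orbit covering numbers before passing to $\limsup_k$, $\inf_{\epsilon,r}$ and $\sup_R$. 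Up to this point your proposal and the paper agree, including the treatment of the $R$-bound and of the quantifier order.

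The step you handle incorrectly is the covering-number bookkeeping, which is precisely where the constants $4\omega,2\omega,\omega$ in the statement come from. In this paper a covering of a set $K$ must have its ball centers in $K$ (Subsection 2.1). Hence an inclusion $\Sigma_1\subseteq\Sigma_2$ does \emph{not} give $o_2(\Sigma_1,\omega)\le o_2(\Sigma_2,\omega)$, as you assert for the right-hand inequality (``the same centers $\Xi^{(\lambda)}$ give an orbit-$\omega$-covering of the projected microstates''): the centers of a covering of $\Sigma_2$ need not lie in $\Sigma_1$. One must instead pick, inside each orbit ball that meets $\Sigma_1$, a new center belonging to $\Sigma_1$ and double the radius, which yields $o_2(\Sigma_1,2\omega)\le o_2(\Sigma_2,\omega)$; applied to the two inclusions above this gives exactly the chain $o_2(\Gamma_R^{(top)}(x),4\omega)\le o_2(\Gamma_R^{(top)}(x:y),2\omega)\le o_2(\Gamma_R^{(top)}(x),\omega)$ behind the displayed inequality. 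Your explanation of the passage from $2\omega$ to $4\omega$ as ``absorbing the polynomial-approximation error'' is therefore misplaced: that error is absorbed into the choice of $\epsilon_1$ and of the polynomial family, so the inclusion of microstate spaces is exact, and your claim that ``these same centers'' form the $4\omega$-covering of $\Gamma_R^{(top)}(x;k,\epsilon',Q_1,\ldots,Q_s)$ again ignores the center-in-set requirement. These are one-line repairs and the lemma does follow, but as written the radius accounting --- the only nontrivial content of the statement beyond the two inclusions --- is not justified.
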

\begin{proof}
\textbf{\ } It is a straightforward adaptation of the proof of Prop.
1.6 in \cite{V3}. Suppose that $\{P_r\}_{r=1}^\infty$, and
$\{Q_s\}_{s=1}^\infty$ respectively, are families of noncommutative
polynomials in $\Bbb C\langle X_1,\ldots, X_n, Y_1,\ldots,
Y_m\rangle$, and $\Bbb C\langle X_1,\ldots, X_n \rangle$
respectively, with rational coefficients.

Given $R> \max_{1\leq j\leq
n}\Vert x_{j}\Vert+ \max_{1\leq j\leq p}\Vert y_{j}\Vert $, $r,s\in \mathbb{N}$ and $\epsilon >0$, we can find $%
r_{1},s_1\in \mathbb{N}$ and $\epsilon _{1}>0, \epsilon_2>0$ such that, for all $k\in \mathbb{N%
}$,$$
\begin{aligned}
\Gamma _{R}^{(top)}(x_{1},\ldots ,x_{n}; k,\epsilon_1, Q_1,\ldots,
Q_{s_1})&\subseteq \ \Gamma_{R}^{(top)}(x_{1},\ldots
,x_{n}:y_{1},\ldots ,y_{p}; k,\epsilon, P_1,\ldots,
P_r)\\
 \Gamma_{R}^{(top)}(x_{1},\ldots
,x_{n}:y_{1},\ldots ,y_{p}; k,\epsilon_2, P_1,\ldots, P_{r_1}) &
\subseteq \ \Gamma _{R}^{(top)}(x_{1},\ldots ,x_{n}; k,\epsilon,
Q_1,\ldots, Q_s ).
\end{aligned}$$
 Hence
\[
\begin{aligned}
o_2&(\Gamma_R^{(top)}(x_1,\ldots,x_n; k,\epsilon_1, Q_1,\ldots,
Q_{s_1}),4\omega)  \le o_2( \Gamma_R^{(top)}(x_1,\ldots,x_n:
y_1,\ldots,y_p;k,\epsilon, P_1,\ldots,
P_r),2\omega)\\
o_2&( \Gamma_R^{(top)}(x_1,\ldots,x_n:
y_1,\ldots,y_p;k,\epsilon_2, P_1,\ldots, P_{r_1}),2\omega)    \le
o_2(\Gamma_R^{(top)}(x_1,\ldots,x_n;k,\epsilon, Q_1,\ldots, Q_s
),\omega),
\end{aligned}
\]%
for all $ \omega>0 $. Therefore, for all $ \omega>0 $,
\[
\begin{aligned}
\inf_{\epsilon_1>0, s_1\in\Bbb N} \ \limsup_{k\rightarrow\infty}
&\frac {\log(o_2 (\Gamma_R^{(top)}(x_1,\ldots,x_n; k,\epsilon_1,
Q_1,\ldots, Q_{s_1}),4\omega))}{k^2} \\& \le
\limsup_{k\rightarrow\infty} \frac {\log(  o_2(
\Gamma_R^{(top)}(x_1,\ldots,x_n: y_1,\ldots,y_p;k,\epsilon,
P_1,\ldots, P_r),2\omega))}{k^2};\end{aligned}
\] and
\[
\begin{aligned}
\inf_{\epsilon_2>0, r_1\in\Bbb N} \ \limsup_{k\rightarrow\infty}
&\frac {\log(o_2 ( \Gamma_R^{(top)}(x_1,\ldots,x_n:
y_1,\ldots,y_p;k,\epsilon_2, P_1,\ldots, P_{r_1}),2\omega))}{k^2}
\\& \le \limsup_{k\rightarrow\infty} \frac {\log(o_2(\Gamma_R^{(top)}(x_1,\ldots,x_n;k,\epsilon, Q_1,\ldots, Q_s
),\omega))}{k^2}.\end{aligned}
\]
It follows that, for all $ \omega>0 $,
\[
\begin{aligned}
\inf_{\epsilon_1>0, s_1\in\Bbb N} \ \limsup_{k\rightarrow\infty}
&\frac {\log(o_2 (\Gamma_R^{(top)}(x_1,\ldots,x_n; k,\epsilon_1,
Q_1,\ldots, Q_{s_1}),4\omega))}{k^2} \\& \le \inf_{\epsilon>0,
r\in\Bbb N}  \ \limsup_{k\rightarrow\infty} \frac {\log(  o_2(
\Gamma_R^{(top)}(x_1,\ldots,x_n: y_1,\ldots,y_p;k,\epsilon,
P_1,\ldots, P_r),2\omega))}{k^2};\end{aligned}
\] and
\[
\begin{aligned}
\inf_{\epsilon_2>0, r_1\in\Bbb N} \ \limsup_{k\rightarrow\infty}
&\frac {\log(o_2 ( \Gamma_R^{(top)}(x_1,\ldots,x_n:
y_1,\ldots,y_p;k,\epsilon_2, P_1,\ldots, P_{r_1}),2\omega))}{k^2}
\\& \le \inf_{\epsilon>0, s\in\Bbb N}  \ \limsup_{k\rightarrow\infty} \frac {\log(o_2(\Gamma_R^{(top)}(x_1,\ldots,x_n;k,\epsilon, Q_1,\ldots, Q_s
),\omega))}{k^2}.\end{aligned}
\]
The rest follows from the definitions.
\end{proof}

\subsection{ }  Our next result shows that the topological orbit dimension
$\frak K_{top}^{(2)}$ is in fact a C$^*$-algebra invariant. 

\begin{theorem}
Suppose that $\mathcal A$ is a unital C$^*$-algebra and
$\{x_1,\ldots, x_n\}$, $\{y_1,\ldots, y_p\}$ are two families of
self-adjoint generators of $\mathcal A$. Then
$$
\frak K_{top}^{(2)}(x_1,\ldots, x_n) = \frak
K_{top}^{(2)}(y_1,\ldots, y_p).
$$
\end{theorem}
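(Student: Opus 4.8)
The plan is to reduce everything to Lemma 3.2 (the inequality $\mathfrak{K}_{top}^{(2)}(x_1,\ldots,x_n;4\omega)\le\mathfrak{K}_{top}^{(2)}(x_1,\ldots,x_n:y_1,\ldots,y_p;2\omega)\le\mathfrak{K}_{top}^{(2)}(x_1,\ldots,x_n;\omega)$, valid whenever the $y_j$ lie in $C^*(x_1,\ldots,x_n)$). Since $\{x_1,\ldots,x_n\}$ and $\{y_1,\ldots,y_p\}$ both generate $\mathcal{A}$, each $y_j$ is in the $C^*$-subalgebra generated by $x_1,\ldots,x_n$, and each $x_i$ is in the $C^*$-subalgebra generated by $y_1,\ldots,y_p$. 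So Lemma 3.2 applies in both directions. First I would record the two chains of inequalities obtained by applying Lemma 3.2 with the roles of the two generating families interchanged.

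Next, I would combine these. From Lemma 3.2 applied to the family $\{x_i\}$ with $\{y_j\}$ adjoined, for every $\omega>0$,
$$
\mathfrak{K}_{top}^{(2)}(x_1,\ldots,x_n;4\omega)\le\mathfrak{K}_{top}^{(2)}(x_1,\ldots,x_n:y_1,\ldots,y_p;2\omega).
$$
The key observation is a ``symmetry'' of the mixed quantity: since the $x_i$ generate the same algebra, one also has, by the other half of Lemma 3.2 now applied with generating family $\{y_j\}$ and $\{x_i\}$ adjoined,
$$
\mathfrak{K}_{top}^{(2)}(x_1,\ldots,x_n:y_1,\ldots,y_p;2\omega)\le\mathfrak{K}_{top}^{(2)}(y_1,\ldots,y_p;\omega),
$$
where one uses that a $2\omega$-orbit-$\|\cdot\|_2$-cover of the microstates space of the $x_i$ in the presence of the $y_j$ can be transported to one of the microstates space of the $y_j$ alone, because the $x_i$ are polynomially approximable by the $y_j$ (this is exactly the content of the right-hand inequality in Lemma 3.2, read with the families swapped and the $x$'s playing the role of the adjoined generators). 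Chaining these gives
$$
\mathfrak{K}_{top}^{(2)}(x_1,\ldots,x_n;4\omega)\le\mathfrak{K}_{top}^{(2)}(y_1,\ldots,y_p;\omega)
$$
for all $\omega>0$. Taking $\limsup_{\omega\to 0^+}$ on both sides — and using Remark 2.4, that $\mathfrak{K}_{top}^{(2)}(\,\cdot\,;\omega)$ is monotone in $\omega$, so that rescaling $\omega\mapsto 4\omega$ does not affect the limit — yields $\mathfrak{K}_{top}^{(2)}(x_1,\ldots,x_n)\le\mathfrak{K}_{top}^{(2)}(y_1,\ldots,y_p)$. By exchanging the two families throughout the argument one gets the reverse inequality, hence equality.

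I expect the main obstacle to be bookkeeping rather than conceptual: one must be careful that Lemma 3.2 as stated controls $\mathfrak{K}_{top}^{(2)}$ of a family \emph{in the presence of} a subordinate family, and to get the clean symmetry above one needs to invoke it twice with different designations of which family is ``primary'' and which is ``adjoined,'' checking in each case that the subordinacy hypothesis (all adjoined generators lie in the $C^*$-algebra generated by the primary ones) holds — which it does precisely because both families generate all of $\mathcal{A}$. A secondary point to handle cleanly is the passage from the $\omega$-by-$\omega$ inequalities to the limit: the factors of $2$ and $4$ in the arguments are harmless because of the monotonicity in Remark 2.4, but this should be stated explicitly. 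Everything else is a direct appeal to the definitions of $\mathfrak{K}_{top}^{(2)}(x_1,\ldots,x_n)$ and $\mathfrak{K}_{top}^{(2)}(x_1,\ldots,x_n:y_1,\ldots,y_p)$ as $\limsup_{\omega\to 0^+}$ of the respective $\omega$-quantities.
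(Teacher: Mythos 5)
Your reduction to Lemma 3.2 alone does not close: the ``key observation'' that $\mathfrak{K}_{top}^{(2)}(x_1,\ldots,x_n:y_1,\ldots,y_p;2\omega)\le\mathfrak{K}_{top}^{(2)}(y_1,\ldots,y_p;\omega)$ is not an instance of Lemma 3.2 in either reading. Lemma 3.2 only compares quantities whose \emph{primary} (covered) family is the same: with the roles swapped it yields $\mathfrak{K}_{top}^{(2)}(y_1,\ldots,y_p;4\omega)\le\mathfrak{K}_{top}^{(2)}(y_1,\ldots,y_p:x_1,\ldots,x_n;2\omega)\le\mathfrak{K}_{top}^{(2)}(y_1,\ldots,y_p;\omega)$, which never mentions $\mathfrak{K}_{top}^{(2)}(x_1,\ldots,x_n:y_1,\ldots,y_p;\cdot)$. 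The two mixed quantities are genuinely different objects: $\mathfrak{K}_{top}^{(2)}(x:y;\cdot)$ counts orbit coverings of a subset of $(\mathcal{M}_k^{s.a.}(\mathbb{C}))^n$ (the projections onto the $x$-coordinates), while $\mathfrak{K}_{top}^{(2)}(y:x;\cdot)$ counts coverings of a subset of $(\mathcal{M}_k^{s.a.}(\mathbb{C}))^p$, and there is no a priori relation between them. The cross-family comparison you need is precisely the substantive content of Theorem 3.2, and it is established in the paper not by citing Lemma 3.2 but by a transport argument: choose polynomials $\psi_1,\ldots,\psi_p$ in the $x$'s (resp.\ in the $y$'s, for the other direction) approximating the other generators to within $\omega/4$ in norm, obtain a Lipschitz constant $D$ for these polynomials on operator-norm-$R$-bounded tuples with respect to $\|\cdot\|_2$, and then show that an $\frac{\omega}{4D}$-orbit-$\|\cdot\|_2$-cover of $\Gamma_R^{(top)}(x_1,\ldots,x_n;k,\epsilon,Q_1,\ldots,Q_s)$ with centers $(B_1^{\lambda},\ldots,B_n^{\lambda})$ produces an $\omega$-orbit cover of $\Gamma_R^{(top)}(y_1,\ldots,y_p:x_1,\ldots,x_n;k,\epsilon',P_1,\ldots,P_r)$ with centers $(\psi_1(B^{\lambda}),\ldots,\psi_p(B^{\lambda}))$. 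You gesture at this in a parenthesis, but attributing it to ``the right-hand inequality in Lemma 3.2, read with the families swapped'' is incorrect; the swapped right-hand inequality is the wrong statement, and the transport argument is not in the lemma.

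A secondary inaccuracy: even once the transport is carried out, you do not get your clean same-$\omega$ inequality. The covering radius on the other side is $\omega/(4D)$, where $D$ depends on $R$ and on the polynomials (hence on $\omega$), so what comes out is a bound of the form $\mathfrak{K}_{top}^{(2)}(y_1,\ldots,y_p:x_1,\ldots,x_n;\omega)\le\mathfrak{K}_{top}^{(2)}(x_1,\ldots,x_n;\tfrac{\omega}{4D})\le\mathfrak{K}_{top}^{(2)}(x_1,\ldots,x_n)$, the last step by the monotonicity in Remark 2.4. This is still enough: combining it with the left-hand inequality of Lemma 3.2 applied to the $y$'s (with the $x$'s adjoined) gives $\mathfrak{K}_{top}^{(2)}(y_1,\ldots,y_p)\le\mathfrak{K}_{top}^{(2)}(x_1,\ldots,x_n)$, and symmetry finishes, which is exactly the structure of the paper's proof. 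So your overall skeleton (Lemma 3.2 plus a change-of-generators comparison, applied in both directions) is the right one, but the change-of-generators step is the heart of the proof and must be argued via the polynomial approximation and Lipschitz estimate rather than deduced from Lemma 3.2.
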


\begin{proof}  \textbf{\ } Note $x_{1},\ldots,x_{n}$ are elements in
$\mathcal{A}$ that generate $\mathcal{A}$ as a C$^*$-algebra. For
every $0<\omega<1$,
there exists a family of noncommutative polynomials $\psi_{i}(x_{1}%
,\ldots,x_{n})$, $1\leq i\leq p$, such that
\[
\left(\sum_{i=1}^{p}\Vert
y_{i}-\psi_{i}(x_{1},\ldots,x_{n})\Vert^2\right)^{1/2}<
\frac{\omega}{4} .
\]
For such a family of polynomials $\psi_{1},\ldots,\psi_{p}$, and
every $R>\max\{\|x_1\|,\ldots, \|x_n\|,\|y_1\|,\ldots, \|y_p\|\}$
there always exists a constant $D\geq1$, depending only on $R,\psi_{1}%
,\ldots,\psi_{n}$, such that
\[
\left (
\sum_{i=1}^{p}\Vert\psi_{i}(A_{1},\ldots,A_{n})-\psi_{i}(B_{1},\ldots
,B_{n})\Vert_{2}^{2}\right )^{1/2}\leq D
\Vert(A_{1},\ldots,A_{n})-(B_{1},\ldots ,B_{n})\Vert_{2},
\]
for all $(A_{1},\ldots,A_{n}),(B_{1},\ldots,B_{n})$ in $\mathcal{M}%
_{k}(\mathbb{C})^{ {n}}$, all $k\in\mathbb{N}$, satisfying $\Vert
A_{j}\Vert, \Vert B_{j}\Vert\leq R,$ for $1\leq j\leq n.$

Suppose that $\{P_r\}_{r=1}^\infty$, and $\{Q_s\}_{s=1}^\infty$
respectively, is the family of noncommutative polynomials in $\Bbb
C\langle Y_1,\ldots, Y_p, X_1,\ldots, X_n \rangle$, and $\Bbb
C\langle X_1,\ldots, X_n \rangle$ respectively, with rational
coefficients.

For any $s\ge 1$, $\epsilon>0$, when $ r$ is sufficiently large,
$\epsilon'$ sufficiently small, every
$$(H_{1},\ldots,H_{p},A_{1},\ldots,A_{n})$$ in
$$\Gamma_{R}^{(top)}(y_{1},\ldots,y_{p},x_{1},\ldots,x_{n};k,\epsilon', P_1,\ldots, P_r)$$
satisfies
\[
\left( \sum_{i=1}^{p}\Vert
H_{i}-\psi_{i}(A_{1},\ldots,A_{n})\Vert^2\right)^{1/2} \leq
\frac{\omega}{4} ;
\]
and
$$(A_{1},\ldots,A_{n})\in \Gamma_{R}^{(top)}( x_{1},\ldots,x_{n};k,\epsilon,
Q_1,\ldots, Q_s).$$  On the other hand, by the definition of the
orbit covering number, we know there exists a set $\{
\mathcal{U}(B_{1}^{\lambda
},\ldots,B_{n}^{\lambda};\frac{\omega}{4D},\| \cdot \|_2)
\}_{\lambda\in\Lambda_{k}}$ of
$\frac{\omega}{4D}$-orbit-$\|\cdot\|_2$-balls that cover
$\Gamma_{R}^{(top)}( x_{1},\ldots,x_{n};k,\epsilon , Q_1,\ldots,
Q_s)$ with the cardinality of $ \Lambda_{k} $ satisfying \
$|\Lambda_{k}|=o_2(\Gamma_{R}^{(top)}(
x_{1},\ldots,x_{n};k,\epsilon,
Q_1,\ldots, Q_s),\frac{\omega}{4D}).$  Thus   for   such   $(A_{1}%
,\ldots,A_{n})$ in $\Gamma_{R}^{(top)}(
x_{1},\ldots,x_{n};k,\epsilon, Q_1,\ldots, Q_s)$, there exists some
$\lambda\in\Lambda_{k}$ and $W\in\mathcal{U}(k)$ such that
\[
\Vert(A_{1},\ldots,A_{n})-(WB_{1}^{\lambda}W^{\ast},\ldots,WB_{n}^{\lambda
}W^{\ast})\Vert_{2}\leq\frac{\omega}{4D}.
\]
It follows that
\[ \begin{aligned}
&\left(\sum_{i=1}^{p} \|
H_{i}-W\psi_{i}(B_{1}^{\lambda},\ldots,B_{n}^{\lambda
})W^{\ast}\Vert_{2}^{2}\right)^{1/2} =\left(\sum_{i=1}^{p}\Vert
H_{i}-\psi_{i}(WB_{1}^{\lambda
}W^{\ast},\ldots,WB_{n}^{\lambda}W^{\ast})\Vert_{2}^{2}\right)^{1/2}\\
&\qquad \le\left(\sum_{i=1}^{p}\|
H_{i}-\psi_i(A_{1},\ldots,A_{n})\|_2^2\right)^{1/2}+\left(\sum_{i=1}^{p}\|\psi_i(A_{1},\ldots,A_{n})-\psi_{i}
(WB_{1}^{\lambda}W^{\ast},\ldots,WB_{n}^{\lambda}W^{\ast})\Vert_{2}^{2}\right)^{1/2}\\
&\qquad \le \left (\sum_{i=1}^{p}\Vert
H_{i}-\psi_i(A_{1},\ldots,A_{n})\|^2\right)^{1/2}+
\frac{\omega}{4}\\& \qquad \leq  \frac{\omega}{2} ,\end{aligned}
\]
for some $\lambda\in\Lambda_{k}$ and $W\in\mathcal U(k),$ i.e.,
\[
(H_{1},\ldots,H_{p})\in\mathcal{U}(\psi_{1}(B_{1}^{\lambda},\ldots
,B_{n}^{\lambda}),\ldots,\psi_{p}(B_{1}^{\lambda},\ldots,B_{n}^{\lambda
});\frac \omega 2).
\]
Hence, for given $s\in \Bbb N$ and $\epsilon>0$, when $\epsilon'$ is
small enough and $r$ is large enough,
$$\begin{aligned}
o_2&(\Gamma_{R}^{(top)}(y_1,\ldots, y_p:
    x_{1},\ldots,x_{n};k,\epsilon' ,
P_1,\ldots, P_r),\omega)\\
&\qquad  \le |\Lambda_k|=o_2(\Gamma_{R}^{(top)}(
x_{1},\ldots,x_{n};k,\epsilon, Q_1,\ldots, Q_s),\frac{\omega}{4D}).
\end{aligned}
$$It follows that
$$
\begin{aligned}
 \inf_{r\in\Bbb N , \ \epsilon'>0}  \lim_{k\rightarrow \infty}& \frac{\log(o_2(\Gamma_{R}^{(top)}(y_1,\ldots, y_p:
    x_{1},\ldots,x_{n};k,\epsilon' ,
P_1,\ldots, P_r),\omega))}{k^2}\\
& \quad \le \lim_{k\rightarrow \infty}
\frac{\log(o_2(\Gamma_{R}^{(top)}( x_{1},\ldots,x_{n};k,\epsilon,
Q_1,\ldots, Q_s),\frac{\omega}{4D}))}{k^2}.
\end{aligned}
$$
Therefore by the definition of the topological orbit dimension and
Remark 2.5, we get
\[
\begin{aligned}
\frak K_{top}^{(2)}&(y_1,\ldots,y_p: x_1,\ldots,x_n;\omega)=
  \frak K_{top}^{(2)} (y_1,\ldots,y_p: x_1,\ldots,x_n;\omega,R)\\
  &=
   \inf_{\epsilon'>0, r\in \Bbb N} \lim_{k\rightarrow \infty}\frac {\log(o_2(\Gamma_{R}^{(top)}(y_1,\ldots, y_p:
    x_{1},\ldots,x_{n};k,\epsilon' ,
P_1,\ldots, P_r),\omega))}{k^2}
   \\
&\le\inf_{\epsilon>0, s\in \Bbb N} \limsup_{k\rightarrow \infty}
\frac {\log(o_2(\Gamma_{R}^{(top)}( x_{1},\ldots,x_{n};k,\epsilon,
Q_1,\ldots, Q_s),\frac
\omega{4D}))}{k^2 }\\
&\le \frak K_{top}^{(2)} (  x_1,\ldots,x_n ),
\end{aligned}
\]where the last inequality follows from the fact that $\frak K_{top}^{(2)} (  x_1,\ldots,x_n
;\omega)$ increases as $\omega$ decreases. Thus, by Lemma 3.2, we
get
$$
\frak K_{top}^{(2)}(y_1,\ldots, y_p)\le \frak K_{top}^{(2)} (
x_1,\ldots,x_n ).
$$ Similarly, we have $$
 \frak K_{top}^{(2)} ( x_1,\ldots,x_n ) \le \frak
K_{top}^{(2)}(y_1,\ldots, y_p),
$$  which completes the proof.
\end{proof}

A slight modification of the proof of Theorem 3.2 will show the
following result.
\begin{theorem}
Suppose that $\mathcal A$ is a unital C$^*$-algebra with a family of
self-adjoint generators $y_1,\ldots, y_n$. Suppose $\mathcal A_i$,
$j=1,2,\ldots$ is an increasing sequence of unital C$^*$-subalgebras
of $\mathcal A$ such that $\cup_{j=1}^\infty \mathcal A_j$ is norm
dense in $\mathcal A$. Suppose $x_1^{(j)},\ldots, x_{n_j}^{(j)}$ is
a  family of self-adjoint generators of $\mathcal A_j$ for
$j=1,2,\ldots$. Then
$$
\frak K_{top}^{(2)}(y_1,\ldots, y_p) \le \liminf_{j\rightarrow
\infty} \frak K_{top}^{(2)}(x_1^{(j)},\ldots, x_{n_j}^{(j)}).
$$
\end{theorem}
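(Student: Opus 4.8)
The plan is to reduce to the argument already carried out in the proof of Theorem 3.2, applied with the fixed generating tuple $(y_1,\ldots,y_p)$ and, in place of the auxiliary tuple $(x_1,\ldots,x_n)$ appearing there, the tuple $(x_1^{(j)},\ldots,x_{n_j}^{(j)})$ for $j$ large, and then to pass to the limit in $\omega$ and in $j$. Two preliminary observations drive everything. First, since $y_1,\ldots,y_p$ generate $\mathcal A$ and each $x_k^{(j)}$ lies in $\mathcal A_j\subseteq\mathcal A=C^*(y_1,\ldots,y_p)$, Lemma 3.2 applies with its role of $(x_1,\ldots,x_n)$ played by $(y_1,\ldots,y_p)$ and its role of $(y_1,\ldots,y_p)$ played by $(x_1^{(j)},\ldots,x_{n_j}^{(j)})$; this gives, for every $\omega>0$ and every $j$,
$$
\frak K_{top}^{(2)}(y_1,\ldots,y_p;4\omega)\ \le\ \frak K_{top}^{(2)}(y_1,\ldots,y_p:x_1^{(j)},\ldots,x_{n_j}^{(j)};2\omega).
$$
Second, fix $\omega\in(0,1)$; since $\bigcup_j\mathcal A_j$ is norm dense in $\mathcal A$ and the $\mathcal A_j$ are increasing, one can choose an index $j_0=j_0(\omega)$ and self-adjoint elements $a_1,\ldots,a_p\in\mathcal A_{j_0}$ with $\bigl(\sum_{i=1}^p\|y_i-a_i\|^2\bigr)^{1/2}<\omega/4$. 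The crucial point is that then, for \emph{every} $j\ge j_0$, we have $a_i\in\mathcal A_{j_0}\subseteq\mathcal A_j=C^*(x_1^{(j)},\ldots,x_{n_j}^{(j)})$, so $a_i$ is a norm limit of noncommutative $*$-polynomials in $x_1^{(j)},\ldots,x_{n_j}^{(j)}$; hence there are noncommutative polynomials $\psi_1^{(j)},\ldots,\psi_p^{(j)}$, with $\psi_i^{(j)}(x_1^{(j)},\ldots,x_{n_j}^{(j)})$ self-adjoint, such that $\bigl(\sum_{i=1}^p\|y_i-\psi_i^{(j)}(x_1^{(j)},\ldots,x_{n_j}^{(j)})\|^2\bigr)^{1/2}<\omega/2$.

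With these approximating polynomials in hand, the estimates in the proof of Theorem 3.2 can be run verbatim, with $(x_1,\ldots,x_n)$ replaced by $(x_1^{(j)},\ldots,x_{n_j}^{(j)})$ and $\psi_i$ by $\psi_i^{(j)}$; the norm bound $R$ and the Lipschitz constant $D$ now depend on $j$, but this is harmless because the resulting bound no longer mentions them. For every $j\ge j_0(\omega)$ one obtains
$$
\frak K_{top}^{(2)}(y_1,\ldots,y_p:x_1^{(j)},\ldots,x_{n_j}^{(j)};2\omega)\ \le\ \frak K_{top}^{(2)}(x_1^{(j)},\ldots,x_{n_j}^{(j)}).
$$
Combining this with the first displayed inequality gives $\frak K_{top}^{(2)}(y_1,\ldots,y_p;4\omega)\le\frak K_{top}^{(2)}(x_1^{(j)},\ldots,x_{n_j}^{(j)})$ for all $j\ge j_0(\omega)$, and therefore
$$
\frak K_{top}^{(2)}(y_1,\ldots,y_p;4\omega)\ \le\ \inf_{j\ge j_0(\omega)}\frak K_{top}^{(2)}(x_1^{(j)},\ldots,x_{n_j}^{(j)})\ \le\ \liminf_{j\to\infty}\frak K_{top}^{(2)}(x_1^{(j)},\ldots,x_{n_j}^{(j)}).
$$
Finally, letting $\omega\to0^+$ and using that $\omega\mapsto\frak K_{top}^{(2)}(y_1,\ldots,y_p;4\omega)$ increases as $\omega$ decreases (so that its limit superior as $\omega\to0^+$ equals $\frak K_{top}^{(2)}(y_1,\ldots,y_p)$, by Remark 2.4) while the right-hand side does not depend on $\omega$, we conclude $\frak K_{top}^{(2)}(y_1,\ldots,y_p)\le\liminf_{j\to\infty}\frak K_{top}^{(2)}(x_1^{(j)},\ldots,x_{n_j}^{(j)})$.

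The one step I expect to require genuine care — and the only place the hypotheses beyond those of Theorem 3.2 are actually used — is the \emph{uniformity in $j$} of the approximants: it is essential that a single $a_i\in\mathcal A_{j_0}$ approximating $y_i$ automatically belongs to $\mathcal A_j$ for all $j\ge j_0$, which is precisely where the nestedness of the $\mathcal A_j$ enters, so that the threshold $j_0$ depends only on $\omega$ and not on the particular large $j$ later fed into the Theorem 3.2 estimate. Everything else — the covering-number and unitary-orbit inequalities imported from the proof of Theorem 3.2, together with the elementary facts $\inf_{j\ge j_0}a_j\le\liminf_j a_j$ and $\limsup_{\omega\to0^+}\frak K_{top}^{(2)}(y_1,\ldots,y_p;4\omega)=\frak K_{top}^{(2)}(y_1,\ldots,y_p)$ — is routine.
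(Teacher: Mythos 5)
Your proof is correct and takes essentially the same route as the paper's: approximate the $y_i$ by noncommutative polynomials in the generators of some $\mathcal A_{j_0}$, rerun the covering estimate from the proof of Theorem 3.2, and conclude via Lemma 3.2 together with the monotonicity of $\frak K_{top}^{(2)}(\,\cdot\,;\omega)$ in $\omega$. The only addition is that you make explicit the uniformity in $j$ (the bound holds for all $j\ge j_0(\omega)$ because the $\mathcal A_j$ are nested), which is exactly the detail the paper leaves implicit when passing from a single $j$ per $\omega$ to the $\liminf$.
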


\begin{proof}
\textbf{\ } Note $\cup_{j=1}^\infty \mathcal A_i$ is norm dense in
$\mathcal A$ and, for each $j\ge 1$, $x_1^{(j)},\ldots,
x_{n_j}^{(j)}$ is a family of self-adjoint generators of $\mathcal
A_j$. For every $0<\omega<1$,
there exist a positive integer $j$ and  a family of noncommutative polynomials $\psi_{i}(x_{1}^{(j)}%
,\ldots,x_{n_j}^{(j)})$, $1\leq i\leq p$, such that
\[
\left (\sum_{i=1}^{p}\Vert
y_{i}-\psi_{i}(x_{1}^{(j)},\ldots,x_{n_j}^{(j)})\Vert^2\right)^{1/2}<
\frac{\omega}{4} .
\] The rest of the proof is identical to the one of Theorem 3.2.
\end{proof}

\subsection{ } Suppose $\mathcal A$ is a finitely generated
unital C$^*$-algebra and $n$ is a positive integer. In this
subsection, we are going to compute the topological orbit dimension
in the unital C$^*$-algebra $\mathcal A\otimes \mathcal M_n(\Bbb
C)$.

Assume that $\{e_{st}\}_{s,t=1}^n$ is a canonical system of matrix
units in $\mathcal M_n(\Bbb C)$ and $I_n$ is the identity matrix of
$\mathcal M_n(\Bbb C)$.

The following statement is an easy adaption of  Lemma 2.3 in
\cite{BR}.
\begin{lemma} [Lemma 2.3 in \cite{BR}]
  For any $\epsilon>0$, there is a   constant $\delta>0$ so that the following
  holds: For any $k\in \Bbb N$, if $\{E_{st}\}_{s,t=1}^n$ is a family of elements in
  $\mathcal M_k(\Bbb C)$ satisfying:
  $$
   \| E_{s_1t_1}E_{s_2t_2}-\delta_{ t_1s_2}E_{s_1t_2}\|\le \delta, \  \|
   E_{s_1t_1}-E_{t_1s_1}^*\|\le \delta, \ \|\sum_{i=1}^n
   E_{ii}-I_k\|\le \delta,  \ \forall 1\le s_1,s_2,t_1,t_2\le n
  $$ (where $\delta_{t_1s_2}$ is $1$ if $t_1=s_2$ and is $0$ if $t_1\ne
  s_2$),  then (i) $n|k$; and (ii) there is some unitary matrix $W$ in $\mathcal
  M_k(\Bbb C)$ such that
  $$
\sum_{s,t=1}^n \|W^*E_{st}W- I_{k/n}\otimes e_{st} \| \le  \epsilon.
 $$
\end{lemma}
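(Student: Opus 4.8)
The statement is a perturbation/stability result for approximate systems of matrix units: if $\{E_{st}\}$ in $\mathcal{M}_k(\mathbb{C})$ approximately satisfy the relations defining a system of $n\times n$ matrix units, then $n \mid k$ and the $E_{st}$ are close (uniformly in $k$) to a genuine copy of $I_{k/n}\otimes e_{st}$ after conjugation by a unitary. The natural line of attack is to reduce to the \emph{exact} case: an exact system of $n\times n$ matrix units $\{F_{st}\}$ in $\mathcal{M}_k(\mathbb{C})$ with $\sum F_{ii} = I_k$ forces $n\mid k$ and, by the uniqueness of representations of the matrix algebra $\mathcal{M}_n(\mathbb{C})$ up to unitary equivalence, gives a unitary $W$ with $W^* F_{st} W = I_{k/n}\otimes e_{st}$ exactly. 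So the real content is to pass from the approximate $\{E_{st}\}$ to a nearby exact $\{F_{st}\}$, with the distance controlled by $\delta$ and going to $0$ as $\delta\to 0$, \emph{independently of $k$}.

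\textbf{Plan of the argument.}

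First I would symmetrize: replace $E_{st}$ by $\tfrac12(E_{st}+E_{ts}^*)$-type corrections so that the self-adjointness relation $E_{st}^* = E_{ts}$ holds exactly, at the cost of enlarging $\delta$ by a universal constant; similarly one may assume $\|E_{st}\|\le 1+\delta$ since $E_{st}E_{ts}E_{st}\approx E_{st}$. Next, build an approximate projection: $P := \sum_{i=1}^n E_{ii}$ is approximately self-adjoint and approximately idempotent with $\|P - I_k\|\le$ (const)$\cdot\delta$, so in fact $P$ is close to $I_k$ and we may as well work with $E_{11}$ as an approximate projection, $\|E_{11}^2 - E_{11}\|$ small, $\|E_{11}^* - E_{11}\|$ small. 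By the standard functional-calculus trick (apply a continuous function that is $0$ near $0$ and $1$ near $1$ to the self-adjointization of $E_{11}$), produce a genuine projection $Q_1$ with $\|Q_1 - E_{11}\|\le$ (const)$\cdot\delta$. Then set $Q_s := E_{s1}Q_1 E_{1s}$; using the approximate relations one checks the $Q_s$ are approximate projections that are approximately mutually orthogonal and approximately sum to $I_k$, so another functional-calculus/small-perturbation step (perturbing a near-partition of unity by projections to an exact one — this is classical, e.g. via the fact that close projections are unitarily equivalent) yields exact mutually orthogonal projections $\widetilde Q_1,\ldots,\widetilde Q_n$ summing to $I_k$ with $\|\widetilde Q_s - Q_s\|$ small. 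Finally, define $F_{s1} := \widetilde Q_s\, E_{s1}\, \widetilde Q_1$ and $F_{1t}:=F_{t1}^*$ and $F_{st}:=F_{s1}F_{1t}$; one verifies $F_{s1}^*F_{s1}$ is close to $\widetilde Q_1$ inside $\widetilde Q_1\mathcal{M}_k\widetilde Q_1$, so a polar-decomposition adjustment of $F_{s1}$ (replace it by the partial isometry in its polar decomposition) makes $\{F_{st}\}$ an \emph{exact} system of matrix units with $\sum F_{ii}=I_k$ and $\|F_{st}-E_{st}\|\le$ (const)$\cdot\delta$. Since each $F_{s1}$ is then an isometry from $\widetilde Q_1\mathbb{C}^k$ onto $\widetilde Q_s\mathbb{C}^k$, all the $\widetilde Q_s$ have equal rank $k/n$, giving $n\mid k$; and the map sending $e_{st}\otimes(\widetilde Q_1\text{-basis vector})$ appropriately extends to a unitary $W$ with $W^* F_{st} W = I_{k/n}\otimes e_{st}$. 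Combining, $\|W^* E_{st} W - I_{k/n}\otimes e_{st}\| \le \|E_{st}-F_{st}\| \le$ (const)$\cdot\delta$, so choosing $\delta$ small enough relative to $\epsilon/n^2$ gives $\sum_{s,t}\|W^*E_{st}W - I_{k/n}\otimes e_{st}\|\le\epsilon$.

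\textbf{Main obstacle.}

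The crux — and the only place where one must be careful — is that \emph{every} perturbation constant must be independent of $k$. Each individual step (functional calculus on a self-adjoint element, perturbing near-projections to projections, polar decomposition of an element close to a partial isometry) is a statement about elements of \emph{norm} at most roughly $1$ in \emph{any} unital C$^*$-algebra, with bounds depending only on the relevant $\varepsilon$'s and on $n$, never on the matrix size; so the uniformity is automatic provided one phrases every estimate in operator norm and never invokes, say, a trace-norm or dimension-dependent inequality. The cleanest way to make this airtight — and presumably what the cited Lemma 2.3 of \cite{BR} does — is an ultraproduct/compactness argument: if the conclusion failed, one would get sequences $k_m\to\infty$ and $\{E_{st}^{(m)}\}$ with $\delta_m\to 0$ but no good $W$; passing to the C$^*$-algebra ultraproduct $\prod^\gamma \mathcal{M}_{k_m}(\mathbb{C})$ (as defined in Definition 2.4 of the excerpt), the images $[(E_{st}^{(m)})_m]$ form an \emph{exact} system of $n\times n$ matrix units, hence generate a copy of $\mathcal{M}_n(\mathbb{C})$, and a routine lifting argument produces matrix units $\{F_{st}^{(m)}\}$ in $\mathcal{M}_{k_m}(\mathbb{C})$ for $m$ near $\gamma$ with $\|F_{st}^{(m)}-E_{st}^{(m)}\|\to 0$; the exact case then supplies the $W$, contradicting the assumption. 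I would adopt this ultraproduct route, since it relegates all the $\varepsilon$-$\delta$ bookkeeping to the soft statement ``a lift of an exact matrix unit system can be chosen close to any approximate one,'' and the divisibility $n\mid k_m$ for large $m$ drops out of the equal ranks of the lifted projections.
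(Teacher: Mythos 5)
Your argument is correct: the paper offers no proof of this lemma (it simply cites Lemma 2.3 of \cite{BR}), and your perturbation outline --- repair $E_{11}$ by functional calculus into a genuine projection, orthogonalize the resulting $Q_s$ into exact projections summing to $I_k$, fix the partial isometries $E_{s1}$ by polar decomposition, and then use that an exact unital system of $n\times n$ matrix units in $\mathcal M_k(\mathbb{C})$ forces $n\mid k$ and is unitarily equivalent to $\{I_{k/n}\otimes e_{st}\}$ --- is exactly the standard stability-of-matrix-units argument behind the cited result, with every estimate an operator-norm estimate whose constants depend only on $n$ and $\delta$, hence uniform in $k$ (including your implicit norm bound on the $E_{st}$, which does follow from $E_{st}E_{st}^*E_{st}\approx E_{st}$). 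Your ultraproduct variant is also sound and in fact mirrors the technique the paper itself uses when applying this lemma in the next lemma's proof; the only bookkeeping to add there is that Definition 2.4 is stated for $k_m\to\infty$, so the case of bounded $k_m$ should be disposed of separately (pass to a constant subsequence and use norm compactness), and that obtaining the conclusion only for $m$ in a single set of the ultrafilter already contradicts the assumed sequence of counterexamples.
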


\begin{lemma}
Suppose $\mathcal A$ is a unital C$^*$-algebra generated by a family
of self-adjoint elements $x_1,\ldots, x_m$. Suppose that
$\{P_r\}_{r=1}^\infty$, and $\{Q_s\}_{s=1}^\infty$ respectively, is
the family of noncommutative polynomials in $\Bbb C\langle
X_1,\ldots, X_m, \{Y_{st}\}_{s,t=1}^n \rangle$, and $\Bbb C\langle
X_1,\ldots, X_m \rangle$ respectively, with rational coefficients.

Let $R> \max\{\|x_1\|, \ldots, \|x_m\|, 1\}$. For any $\omega>0$,
$r_0>0$ and $\epsilon_0>0$, there are some $r>0$ and $\epsilon>0$
such that the following holds:
 if
 $$
(A_1,\ldots, A_m, \{E_{st}\}_{s,t=1}^n) \in
\Gamma_{R}^{(top)}(x_1\otimes I_n,\ldots, x_m\otimes
I_n,\{I_{\mathcal A}\otimes e_{st}\}_{s,t=1}^n, k,\epsilon,
P_1,\ldots, P_{r})\ne \emptyset,
 $$ then (i) $n| k$; (ii) there are a unitary matrix $W$
 in $\mathcal M_k(\Bbb C)$ and
$$
  (B_1, \ldots, B_m) \in \Gamma_{R}^{(top)}(x_1,\ldots, x_m, \frac k{n},\epsilon_0,
Q_1,\ldots, Q_{r_0}).
 $$
  such that
 $$
\sum_{s,t=1}^n \|W^*E_{st}W- I_{k/n}\otimes e_{st} \| +\sum_{i=1}^m
 \|  W^*A_iW  -B_i\otimes I_{n}\|\le
\omega  .
 $$

\end{lemma}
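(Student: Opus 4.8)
The plan is a quantitative approximation argument built on the preceding lemma, together with the elementary fact that a matrix which nearly commutes with a system of matrix units is nearly ``block--scalar''. Two features make it work: $x_i\otimes I_n$ commutes with $I_{\mathcal A}\otimes e_{st}$ and the $I_{\mathcal A}\otimes e_{st}$ form an honest system of matrix units summing to $1$, so all the relevant relation polynomials vanish at the target tuple; and the operator norm is unitarily invariant, so conjugating a microstate by a unitary does not change the norms of polynomials in it.

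First I would fix which test polynomials to use. Since $\{P_r\}_{r=1}^{\infty}$ enumerates all noncommutative polynomials in $X_1,\ldots,X_m,\{Y_{st}\}_{s,t=1}^{n}$ with rational coefficients, for $r$ large enough the list $P_1,\ldots,P_r$ contains (a) the matrix--unit relation polynomials $Y_{s_1t_1}Y_{s_2t_2}-\delta_{t_1 s_2}Y_{s_1t_2}$ and $\sum_{i}Y_{ii}-1$, along with the relations expressing $Y_{st}=Y_{ts}^{\ast}$; (b) the commutators $X_iY_{st}-Y_{st}X_i$ for all $1\le i\le m$ and $1\le s,t\le n$; and (c) the polynomials $Q_1,\ldots,Q_{r_0}$, viewed inside $\mathbb C\langle X_1,\ldots,X_m,\{Y_{st}\}\rangle$. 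Since every polynomial in (a) and (b) vanishes at $(x_1\otimes I_n,\ldots,x_m\otimes I_n,\{I_{\mathcal A}\otimes e_{st}\})$, any $(A_1,\ldots,A_m,\{E_{st}\})$ in the corresponding norm--microstates space with tolerance $\epsilon$ has its $E_{st}$ satisfying the hypotheses of the preceding lemma up to $\epsilon$, and $\|A_iE_{st}-E_{st}A_i\|\le\epsilon$ for all $i,s,t$.

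Next I would choose constants in the order forced by the quantifiers. Given $\omega,\epsilon_0>0$ and $r_0\in\mathbb N$, let $L=\max_{1\le j\le r_0}L_j$, where $L_j$ is a Lipschitz constant for $Q_j$ on the operator--norm ball of radius $R$ in $(\mathcal M_N(\mathbb C))^m$, which may be taken independent of $N$. Choose $\eta\in(0,\omega)$ so small that the error estimates below all fall under $\omega$ and $\epsilon_0$; apply the preceding lemma with accuracy parameter $\eta$ to obtain $\delta=\delta(\eta)>0$; fix $r$ so $P_1,\ldots,P_r$ contains the polynomials of (a)--(c); and set $\epsilon=\min\{\delta,\eta\}$. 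Now suppose the microstates space is nonempty and $(A_1,\ldots,A_m,\{E_{st}\})$ is a point of it. By the preceding lemma, $n\mid k$ (this is (i)), and there is $W\in\mathcal U(k)$ with $\sum_{s,t}\|W^{\ast}E_{st}W-I_{k/n}\otimes e_{st}\|\le\eta$. Put $C_i=W^{\ast}A_iW$. From $\|A_iE_{st}-E_{st}A_i\|\le\epsilon$ and $\|W^{\ast}E_{st}W-I_{k/n}\otimes e_{st}\|\le\eta$ one gets $\|C_i(I_{k/n}\otimes e_{st})-(I_{k/n}\otimes e_{st})C_i\|\le\epsilon+2R\eta$ for all $s,t$. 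Writing $C_i=\sum_{a,b=1}^{n}C_i^{ab}\otimes e_{ab}$ with $C_i^{ab}\in\mathcal M_{k/n}(\mathbb C)$, this approximate commutation forces each off--diagonal block $C_i^{ab}$ ($a\ne b$) to be small and the diagonal blocks $C_i^{aa}$ to be mutually close; so with $B_i:=\frac1n\sum_{a=1}^{n}C_i^{aa}$ one obtains $\|C_i-B_i\otimes I_n\|\le c\,n^2(\epsilon+2R\eta)$ for an absolute constant $c$. Since $A_i=A_i^{\ast}$ we have $C_i=C_i^{\ast}$, hence each $C_i^{aa}$ and therefore $B_i$ is self--adjoint, and $\|B_i\|\le\|C_i\|=\|A_i\|\le R$.

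Finally I would verify $(B_1,\ldots,B_m)\in\Gamma_R^{(top)}(x_1,\ldots,x_m;k/n,\epsilon_0,Q_1,\ldots,Q_{r_0})$, which with the previous estimates gives (ii). We already have $\|B_i\|\le R$. For $1\le j\le r_0$: unitary invariance gives $\|Q_j(A_1,\ldots,A_m)\|=\|Q_j(C_1,\ldots,C_m)\|$; since $Q_j$ occurs among $P_1,\ldots,P_r$, the microstates condition gives $|\,\|Q_j(A_1,\ldots,A_m)\|-\|Q_j(x_1\otimes I_n,\ldots,x_m\otimes I_n)\|\,|\le\epsilon$; and $\|Q_j(x_1\otimes I_n,\ldots,x_m\otimes I_n)\|=\|Q_j(x_1,\ldots,x_m)\otimes I_n\|=\|Q_j(x_1,\ldots,x_m)\|$. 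Moreover, $Q_j$ being $L_j$--Lipschitz on the $R$--ball and $\|B_i\otimes I_n\|=\|B_i\|\le R$, $|\,\|Q_j(C_1,\ldots,C_m)\|-\|Q_j(B_1,\ldots,B_m)\|\,|\le L_j\max_i\|C_i-B_i\otimes I_n\|\le L\,c\,n^2(\epsilon+2R\eta)$. Chaining these and using the choice of $\eta$, $|\,\|Q_j(B_1,\ldots,B_m)\|-\|Q_j(x_1,\ldots,x_m)\|\,|\le\epsilon_0$; likewise $\sum_{s,t}\|W^{\ast}E_{st}W-I_{k/n}\otimes e_{st}\|+\sum_{i}\|W^{\ast}A_iW-B_i\otimes I_n\|\le\eta+m\,c\,n^2(\epsilon+2R\eta)\le\omega$ after one more harmless shrinking of $\eta$. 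The main obstacle is the bookkeeping: pinning down the quantitative ``near--commutant of a matrix--unit system is near block--scalar'' estimate with explicit constants and fixing $\eta$, $r$, $\epsilon$ in the right order; there is also a mild point of care in invoking the preceding lemma, which presupposes that the $E_{st}$ approximately satisfy the full matrix--unit relations — adjoint relation included — in whatever convention the norm--microstates space uses for the $\{Y_{st}\}$.
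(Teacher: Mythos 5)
Your argument is correct, but it is not the route the paper takes: the paper proves this lemma by contradiction, extracting for each $r$ a bad microstate in $\mathcal M_{k_r}(\Bbb C)$, passing to the C$^*$-ultraproduct $\prod_{r}^{\gamma}\mathcal M_{k_r}(\Bbb C)$, where the approximate relations become exact, so that the image of $\mathcal A\otimes \mathcal M_n(\Bbb C)$ contains an honest system of matrix units (handled by Lemma 3.3, the Blackadar--Kumjian--R{\o}rdam stability lemma) whose relative commutant forces $[(A_i^{(r)})_r]=[(W_r(B_i^{(r)}\otimes I_n)W_r^*)_r]$, contradicting the assumed failure. You instead run a direct quantitative argument: Lemma 3.3 gives $n\mid k$ and the unitary $W$, and then the elementary estimate that a matrix nearly commuting with the exact matrix units $I_{k/n}\otimes e_{st}$ has small off-diagonal blocks and nearly equal diagonal blocks, so that $B_i:=\frac1n\sum_a C_i^{aa}$ does the job; the membership of $(B_1,\ldots,B_m)$ in $\Gamma_R^{(top)}(x_1,\ldots,x_m;k/n,\epsilon_0,Q_1,\ldots,Q_{r_0})$ then follows from unitary invariance, the inclusion of the $Q_j$ among the $P_r$, and a dimension-free Lipschitz bound. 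Both proofs hinge on Lemma 3.3; what yours buys is an explicit chain of estimates (in particular $\|B_i\|\le R$ and self-adjointness come for free from averaging compressions of $C_i=W^*A_iW$, a point the ultraproduct proof leaves implicit), at the cost of the block-matrix bookkeeping, while the paper's ultraproduct argument is shorter to write because the ultrafilter absorbs all quantitative choices of $r$ and $\epsilon$. The one point you should make explicit (you flag it, and the paper is equally loose here) is that $\Gamma^{(top)}_R$ is by definition a subset of $(\mathcal M_k^{s.a.}(\Bbb C))^{m+n^2}$, so the non-self-adjoint $e_{st}$ must be encoded by self-adjoint variables (e.g. $e_{st}+e_{ts}$ and $i(e_{st}-e_{ts})$), and the approximate matrix-unit relations fed into Lemma 3.3, adjoint relation included, must be recovered from the vanishing of the corresponding relation polynomials at the target tuple; with that convention fixed, your proof is complete.
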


\begin{proof}
We will prove the result by using contradiction. Suppose, to the
contrary, that the result of the lemma does not hold. There are
$\omega>0$, $r_0\in \Bbb N$, and $\epsilon_0>0$ such that, for any
$r\in \Bbb N$,  there are $k_r\in \Bbb N$ and
 $$
(A_1^{(r)},\ldots, A_m^{(r)}, \{E_{st}^{(r)}\}_{s,t=1}^n) \in
\Gamma_{R}^{(top)}(x_1\otimes I_n,\ldots, x_m\otimes
I_n,\{I_{\mathcal A}\otimes e_{st}\}_{s,t=1}^n, k_r,1/r, P_1,\ldots,
P_{r})\ne \emptyset,
 $$ satisfying either $n\nmid k_r$, or if $W$ is a unitary matrix   in $\mathcal M_{k_r}(\Bbb
 C)$ and
 \begin{equation}
  (B_1, \ldots, B_m) \in \Gamma_{R}^{(top)}(x_1,\ldots, x_m, \frac k{n},\epsilon_0,
Q_1,\ldots, Q_{r_0}). \end{equation} then
 \begin{equation}
\sum_{s,t=1}^n \|W^*E_{st}^{(r)}W- I_{k/n}\otimes e_{st} \|
+\sum_{i=1}^m
 \|  W^*A_i^{(r)}W  -B_i\otimes I_{n}\|>
\omega  .\end{equation}

Let $\gamma$ be a free  ultra-filter in $\beta(\Bbb N)\setminus
\Bbb N$. Let $\prod_{r=1}^\gamma \mathcal M_{k_r}(\Bbb C)$ be the
C$^*$ algebra ultra-product of matrices algebras $(\mathcal
M_{k_r}(\Bbb C))_{r=1}^\infty$ along the ultra-filter $\gamma$,
i.e. $\prod_{r=1}^\gamma \mathcal M_{k_r}(\Bbb C)$ is the quotient
algebra of the C$^*$-algebra $\prod_r \mathcal M_{k_r}(\Bbb C)$ by
$\mathcal I_\infty$, where $\mathcal
I_\infty=\{(Y_r)_{r=1}^\infty \in \prod_r \mathcal M_{k_r}(\Bbb
C) \ |   \lim_{r\rightarrow \gamma} \|Y_r\|=0\} $.

Let $\psi$ be the $*$-isomorphism from the C$^*$-algebra $\mathcal
A\otimes \mathcal M_n(\Bbb C)$ into the C$^*$-algebra
$\prod_{r=1}^\gamma \mathcal M_{k_r}(\Bbb C)$ induced by the mapping
$$
   x_i\otimes I_n \rightarrow [(A_i^{(r)})_r]\in \prod_{r=1}^\gamma \mathcal M_{k_r}(\Bbb C),
    \ I_{\mathcal A}\otimes e_{st} \rightarrow [(E_{st}^{(r)})_r]\in \prod_{r=1}^\gamma \mathcal M_{k_r}(\Bbb C)
    \qquad \forall \ 1\le i\le m, 1\le s,t\le n.
$$
Thus $\{\psi(I_{\mathcal A}\otimes e_{st})\}_{s,t=1}^n$ is also a
system of matrix units of a C$^*$-subalgebra ($*$-isomorphic to
$\mathcal M_n(\Bbb C)$) in $\prod_{r=1}^\gamma \mathcal M_{k_r}(\Bbb
C)$. By the preceding lemma, without loss of generality, we can
assume that $n|k_r$ and there is a sequence of unitary matrices
$\{W_r\}_{r=1}^\infty$ where $W_r$ is in $\mathcal M_{k_r}(\Bbb C)$
such that
\begin{equation}
 [(E_{st}^{(r)})_r] = [(W_r (I_{k_r/n}\otimes e_{st})W_r^*)_r], \qquad
 \forall \ 1\le s,t\le n.
\end{equation}
Note that
$$
[(A_i^{(r)})_r][(E_{st}^{(r)})_r] =[(E_{st}^{(r)})_r][(A_i^{(r)})_r]
, \qquad \forall \ 1\le i\le m, 1\le s,t\le n.
$$
Thus by (3.3), there are $B_1^{(r)}, \ldots ,B_m^{(r)}$ in $\mathcal
M_{k_r/n}(\Bbb C)$ for each $r\ge 1$ such that
$$
[(A_i^{(r)})_r]=[(W_r (B_i^{(r)}\otimes I_n) W_r^*)_r], \qquad
\forall \ 1\le i\le m,$$
 which contradicts with our assumptions (3.1), (3.2) and (3.3). This
 completes the proof of the lemma.

\end{proof}

Now we are ready to prove the main result in this subsection.
\begin{theorem}
Suppose that $\mathcal A$ is a unital C$^*$-algebra and $n$ is a
positive integer. Suppose that $\mathcal B= \mathcal A\otimes
\mathcal M_n(\Bbb C)$. If $x_1,\ldots, x_m$ is a family of
self-adjoint generators of $\mathcal A$ and $y_1,\ldots, y_p$ is a
family of self-adjoint generators of $\mathcal B$, then
$$
  \frak K_{top}^{(2)}(y_1,\ldots, y_p) \le \frak
  K_{top}^{(2)}(x_1,\ldots, x_m).
$$
\end{theorem}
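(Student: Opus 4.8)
The plan is to use the structure theorem for microstates of $\mathcal{B} = \mathcal{A}\otimes\mathcal{M}_n(\mathbb{C})$ established in Lemma 3.6, together with the invariance of $\mathfrak{K}_{top}^{(2)}$ under change of generators (Theorem 3.2). First I would replace the given generators $y_1,\ldots,y_p$ of $\mathcal{B}$ by a more convenient generating set: since $\mathcal{A}$ is generated by $x_1,\ldots,x_m$, the algebra $\mathcal{B}$ is generated by the self-adjoint family $\{x_i\otimes I_n\}_{i=1}^m$ together with the self-adjoint matrix units $\{I_{\mathcal A}\otimes(e_{st}+e_{ts})\}_{s\le t}$ and $\{I_{\mathcal A}\otimes i(e_{st}-e_{ts})\}_{s<t}$ (or one can simply work with the full system $\{I_{\mathcal A}\otimes e_{st}\}$ and absorb the non-self-adjoint bookkeeping into the polynomials, exactly as in Lemma 3.6). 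By Theorem 3.2 it suffices to estimate $\mathfrak{K}_{top}^{(2)}$ of this particular generating family, so the problem reduces to comparing the norm-microstates space of $(\{x_i\otimes I_n\},\{I_{\mathcal A}\otimes e_{st}\})$ in $\mathcal{B}$ with that of $(x_1,\ldots,x_m)$ in $\mathcal{A}$.

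Next I would fix $\omega>0$ and $R>\max\{\|x_1\|,\ldots,\|x_m\|,1\}$, and apply Lemma 3.6: for suitable $r,\epsilon$ (depending on a choice of $r_0,\epsilon_0$ for the $\mathcal{A}$-side and on $\omega$), every element $(A_1,\ldots,A_m,\{E_{st}\})$ of the microstates space $\Gamma_R^{(top)}(\{x_i\otimes I_n\},\{I_{\mathcal A}\otimes e_{st}\};k,\epsilon,P_1,\ldots,P_r)$ is, up to a unitary conjugation $W$ and an error of size $\omega/2$ in operator norm (hence in $\|\cdot\|_2$), of the form $(B_1\otimes I_n,\ldots,B_m\otimes I_n,\{I_{k/n}\otimes e_{st}\})$ with $(B_1,\ldots,B_m)\in\Gamma_R^{(top)}(x_1,\ldots,x_m;k/n,\epsilon_0,Q_1,\ldots,Q_{r_0})$. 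Now cover the latter microstates space by $o_2(\Gamma_R^{(top)}(x_1,\ldots,x_m;k/n,\epsilon_0,Q_1,\ldots,Q_{r_0}),\omega/2)$ many $(\omega/2)$-orbit-$\|\cdot\|_2$-balls centered at tuples $(B_1^\lambda,\ldots,B_m^\lambda)$. The key point is that the amplification map $B\mapsto B\otimes I_n$ is, with respect to the normalized trace norms, an isometry (the normalized trace on $\mathcal{M}_k$ restricts, under the identification $\mathcal{M}_k\cong\mathcal{M}_{k/n}\otimes\mathcal{M}_n$, to the normalized trace on $\mathcal{M}_{k/n}$ tensored with $\tau_n$), and that conjugating by a unitary of the form $V\otimes I_n$ on the amplified side corresponds to conjugating by $V$ downstairs, while the matrix-unit coordinates $\{I_{k/n}\otimes e_{st}\}$ are left exactly invariant. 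Therefore each orbit ball downstairs, after amplification and adjoining the fixed matrix-unit block, sits inside a single $\omega$-orbit-$\|\cdot\|_2$-ball upstairs (absorbing the $\omega/2$ error from Lemma 3.6 and using the triangle inequality, together with the fact that a general unitary conjugation $W$ upstairs only helps, since we take a union over all of $\mathcal{U}(k)$). This yields
$$
o_2\!\left(\Gamma_R^{(top)}(\{x_i\otimes I_n\},\{I_{\mathcal A}\otimes e_{st}\};k,\epsilon,P_1,\ldots,P_r),\omega\right)\le o_2\!\left(\Gamma_R^{(top)}(x_1,\ldots,x_m;k/n,\epsilon_0,Q_1,\ldots,Q_{r_0}),\tfrac{\omega}{2}\right).
$$

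Finally I would take logarithms and divide by $k^2$. Writing $k = n\ell$, the denominator becomes $n^2\ell^2$, which only makes the left-hand ratio smaller, so in particular $\frac{1}{k^2}\log(\cdots)\le\frac{1}{\ell^2}\log o_2(\Gamma_R^{(top)}(x_1,\ldots,x_m;\ell,\epsilon_0,Q_1,\ldots,Q_{r_0}),\omega/2)$; note also that by Lemma 3.6 the left side is $-\infty$ (the microstates space is empty) whenever $n\nmid k$, so passing to $\limsup_{k\to\infty}$ is legitimate. Taking $\limsup_{k\to\infty}$, then $\inf$ over $\epsilon_0>0,r_0\in\mathbb{N}$ on the right and $\inf$ over $\epsilon>0,r\in\mathbb{N}$ on the left, then $\sup_{R}$, and finally $\limsup_{\omega\to0^+}$ (using that $\mathfrak{K}_{top}^{(2)}(x_1,\ldots,x_m;\cdot)$ is monotone, so $\omega/2$ versus $\omega$ washes out in the limit), gives $\mathfrak{K}_{top}^{(2)}(\{x_i\otimes I_n\},\{I_{\mathcal A}\otimes e_{st}\})\le\mathfrak{K}_{top}^{(2)}(x_1,\ldots,x_m)$, and Theorem 3.2 converts the left-hand side into $\mathfrak{K}_{top}^{(2)}(y_1,\ldots,y_p)$. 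The main obstacle is bookkeeping the quantifiers on $r,\epsilon,r_0,\epsilon_0$ correctly so that Lemma 3.6 applies uniformly in $k$; the geometric content — that amplification is a trace-norm isometry intertwining unitary conjugations — is routine, and the non-self-adjointness of the $e_{st}$ is handled exactly as in the statement of Lemma 3.6 (the microstates are required to approximate the relations, which force the $E_{st}$ to be approximate matrix units).
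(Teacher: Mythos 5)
Your argument is essentially the paper's own proof: it reduces, via Theorem 3.2, to the generating family $\{x_i\otimes I_n\}\cup\{I_{\mathcal A}\otimes e_{st}\}$, then uses the microstate structure lemma for $\mathcal A\otimes\mathcal M_n(\Bbb C)$ (which is Lemma 3.4 in the paper, not Lemma 3.6, though the content you describe is the right one) to transfer orbit coverings from $\Gamma_R^{(top)}(x_1,\ldots,x_m;k/n,\ldots)$ to the amplified microstates space and obtain the covering-number inequality, exactly as the paper does. Your handling of the $k=n\ell$ normalization and the empty case $n\nmid k$ is correct and in fact slightly more careful than the paper's write-up.
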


\begin{proof} Suppose that
$\{P_r\}_{r=1}^\infty$, and $\{Q_s\}_{s=1}^\infty$ respectively, is
a family of noncommutative polynomials in $\Bbb C\langle X_1,\ldots,
X_m, \{Y_{st}\}_{s,t=1}^n \rangle$, and $\Bbb C\langle X_1,\ldots,
X_m \rangle$ respectively, with rational coefficients.

Let $R> \max\{\|x_1\|, \ldots, \|x_m\|, 1\}$. For any $\omega>0$,
$r_0>0$ and $\epsilon_0>0$, by the preceding lemma, there is a $r>0$
such that, $\forall \ k \in \Bbb N,$
$$\begin{aligned}
o_2(\Gamma_R^{(top)}&(x_1\otimes I_n,\ldots, x_m\otimes I_{n},
\{I_{\mathcal A}\otimes e_{st}\}_{s,t=1}^n; k,1/r, P_1,\ldots, P_r,
2\omega) \\ & \le o_2(\Gamma_R^{(top)}(x_1,\ldots, x_n; k ,1/r_0,
Q_1,\ldots, Q_{r_0}, \omega) .
\end{aligned}
$$
Thus,
$$
   \begin{aligned}
     \inf_{r\in \Bbb N}   \ & \limsup_{k\rightarrow\infty}  \frac {\log(o_2(\Gamma_R^{(top)}(x_1\otimes I_n,\ldots, x_m\otimes I_{n}, \{I_{\mathcal A}\otimes e_{st}\}_{s,t=1}^n; k,1/r, P_1,\ldots, P_r,
2\omega)) }{k^2} \\
& \qquad \le \limsup_{k\rightarrow\infty}  \frac
{\log(o_2(\Gamma_R^{(top)}(x_1,\ldots, x_n; k ,1/r_0, Q_1,\ldots,
Q_{r_0},  \omega)) }{k^2}.
   \end{aligned}
$$ So
$$
   \begin{aligned}
     \inf_{r\in \Bbb N}  \ & \limsup_{k\rightarrow\infty}  \frac {\log(o_2(\Gamma_R^{(top)}(x_1\otimes I_n,\ldots, x_m\otimes I_{n}, \{I_{\mathcal A}\otimes e_{st}\}_{s,t=1}^n; k,1/r, P_1,\ldots, P_r,
2\omega)) }{k^2} \\
& \qquad \le   \inf_{r_0\in \Bbb N} \ \limsup_{k\rightarrow\infty}
\frac {\log(o_2(\Gamma_R^{(top)}(x_1,\ldots, x_n; k,1/r_0,
Q_1,\ldots, Q_{r_0},  \omega)) }{k^2 }.
   \end{aligned}
$$
It follows easily that
$$
  \frak K_{top}^{(2)}(x_1\otimes I_n,\ldots, x_m\otimes I_{n}, \{I_{\mathcal A}\otimes e_{st}\}_{s,t=1}^n ) \le \frak
  K_{top}^{(2)}(x_1,\ldots, x_m).
$$
By Theorem 3.2, we have
$$
  \frak K_{top}^{(2)}(y_1,\ldots, y_p) \le \frak
  K_{top}^{(2)}(x_1,\ldots, x_m),
$$ where $y_1,\ldots, y_p$ is a
family of self-adjoint generators of $\mathcal B$
\end{proof}

The following corollary follows directly from the preceding
theorem.

\begin{corollary}
Suppose that $\mathcal A$ is a unital C$^*$-algebra with a family
of self-adjoint generators $x_1,\ldots, x_m$. Suppose that $n$ is
a positive integer and $\mathcal B=\mathcal A\otimes \mathcal
M_n(\Bbb C)$. If
$$
 \frak
  K_{top}^{(2)}(x_1,\ldots, x_m)=0,
$$ then
$$
  \frak K_{top}^{(2)}(y_1,\ldots, y_p)=0\qquad and \qquad
  \delta_{top}(y_1,\ldots, y_p)\le 1, $$ where $y_1,\ldots, y_p$ is  any family of
  self-adjoint generators of $\mathcal B$.
\end{corollary}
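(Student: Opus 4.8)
The plan is to obtain this as an immediate packaging of Theorem 3.4 and Theorem 3.1, so the work is essentially already done. First I would apply Theorem 3.4 directly: since $x_1,\ldots,x_m$ is a family of self-adjoint generators of $\mathcal A$ and $y_1,\ldots,y_p$ is an arbitrary family of self-adjoint generators of $\mathcal B=\mathcal A\otimes\mathcal M_n(\Bbb C)$, the hypotheses of Theorem 3.4 are met verbatim, and we get
$$
\frak K_{top}^{(2)}(y_1,\ldots,y_p)\le \frak K_{top}^{(2)}(x_1,\ldots,x_m)=0 .
$$

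Next I would record the (routine) observation that $\frak K_{top}^{(2)}$ is always nonnegative whenever the relevant norm-microstates spaces are eventually nonempty: each orbit covering number $o_2(\Gamma_R^{(top)}(\cdots),\omega)$ of a nonempty set is at least $1$, so its logarithm is $\ge 0$, and hence every $\limsup$, $\inf$, and $\sup$ appearing in Definition 2.8 is $\ge 0$. Together with the displayed inequality this forces $\frak K_{top}^{(2)}(y_1,\ldots,y_p)=0$. (In the degenerate case where the microstates spaces of $\mathcal B$ are eventually empty one has $\frak K_{top}^{(2)}(y_1,\ldots,y_p)=\delta_{top}(y_1,\ldots,y_p)=-\infty$ by the usual convention and the conclusion is trivial; but since $\frak K_{top}^{(2)}(x_1,\ldots,x_m)=0$ the algebra $\mathcal A$, hence $\mathcal B$, is MF, so this does not occur.) Finally, applying the ``in particular'' clause of Theorem 3.1 with the tuple $y_1,\ldots,y_p$ and using $\frak K_{top}^{(2)}(y_1,\ldots,y_p)=0$ yields $\delta_{top}(y_1,\ldots,y_p)\le 1$, which is the second assertion.

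There is no real obstacle here: all the substance is already contained in Theorem 3.4 (whose proof rests on the matrix-unit stability Lemma 3.7 and the $C^*$-ultraproduct argument of Lemma 3.8, followed by an appeal to Theorem 3.2) and in Theorem 3.1 (which follows from Lemma 3.1 via Szarek's covering estimate). The only point worth flagging explicitly is that the statement is for an \emph{arbitrary} family of self-adjoint generators $y_1,\ldots,y_p$ of $\mathcal B$, but this is precisely the generality in which Theorem 3.4 is already phrased, so no further invariance argument is needed beyond what is built into that theorem.
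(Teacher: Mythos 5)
Your proposal is correct and is essentially the paper's own argument: the paper states that the corollary ``follows directly from the preceding theorem,'' i.e.\ Theorem 3.4 gives $\frak K_{top}^{(2)}(y_1,\ldots,y_p)\le \frak K_{top}^{(2)}(x_1,\ldots,x_m)=0$, and Theorem 3.1 then yields $\delta_{top}(y_1,\ldots,y_p)\le 1$. Your extra remark that $\frak K_{top}^{(2)}$ takes only the values $-\infty$ or $\ge 0$ (so the inequality forces equality with $0$ once the microstates spaces are nonempty) is a harmless elaboration the paper leaves implicit, consistent with its own observation in the proof of Theorem 3.5.
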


\begin{example}
Suppose that $x_1,\ldots, x_m$ is a  family of self-adjoint
generators of a full matrix algebra $\mathcal M_n(\Bbb C)$. Then
$$
 \frak
  K_{top}^{(2)}(x_1,\ldots, x_m)=0.
$$
\end{example}
\subsection{ }

In this subsection, we assume that $\mathcal A$ and $\mathcal B$ are
two unital C$^*$-algebras and $\mathcal A \bigoplus \mathcal B$ is
the orthogonal sum of $\mathcal A$ and $\mathcal B$.  We assume
$x_1,\ldots, x_n$, or $y_1,\ldots, y_m$, is a family of self-adjoint
generators of $\mathcal A$, or $\mathcal B$ respectively.  Suppose
that $\{P_r\}_{r=1}^\infty$, and $\{Q_s\}_{s=1}^\infty$
respectively, is the family of noncommutative polynomials in $\Bbb
C\langle X_1,\ldots, X_n  \rangle$, and $\Bbb C\langle Y_1,\ldots,
Y_m \rangle$ respectively, with rational coefficients. Suppose that
$\{S_r\}_{r=1}^\infty$  is the family of noncommutative polynomials
in $\Bbb C\langle X_1,\ldots, X_n,
 Y_1,\ldots, X_m
\rangle$   with rational coefficients.

Let $R>\max\{\|x_1\|, \ldots,\|x_n\|, \|y_1\|,\ldots,\|y_m\|\}$ be a
positive number. By the definition of topological orbit dimension,
we have the following.
\begin{lemma}
Let
$$
\alpha>\frak K_{top}^{(2)}(x_1,\ldots,x_n) \qquad \text { and }
\qquad \beta>\frak K_{top}^{(2)}(y_1,\ldots,y_m).
$$ (i)  For each $ \omega> 0$, there is $r(\omega)$
satisfying
$$
   \begin{aligned}
       \limsup_{k_1\rightarrow \infty} & \frac {\log(o_2(\Gamma_R^{(top)}(x_1,\ldots,x_n;k_1,\frac 1 {r(\omega)},
      P_1,\ldots,P_{r(\omega)}),\omega))}{ k_1^2 } <\alpha;\\
        \limsup_{k_2\rightarrow \infty} & \frac {\log(o_2(\Gamma_R^{(top)}(y_1,\ldots,y_m;k_2,\frac 1 {r(\omega)},
      Q_1,\ldots,Q_{r(\omega)}),\omega))}{ k_2^2 } <\beta.
   \end{aligned}
$$
(ii) Therefore, for each $ \omega> 0$ and $r(\omega)\in \Bbb N$,
there is some $K(r(\omega))\in \Bbb N$ satisfying
$$
   \begin{aligned}
       & {\log(o_2(\Gamma_R^{(top)}(x_1,\ldots,x_n;k_1,\frac 1 {r(\omega)},
      P_1,\ldots,P_{r(\omega)}),\omega))} <\alpha{ k_1^2 }, \quad \forall \ k_1\ge K( r(\omega));\\
         &   {\log(o_2(\Gamma_R^{(top)}(y_1,\ldots,y_m;k_2,\frac 1 {r(\omega)},
      Q_1,\ldots,Q_{r(\omega)}),\omega))} < \beta{k_2^2 }, \quad \forall \ k_2\ge
      K( r(\omega)).
   \end{aligned}
$$
\end{lemma}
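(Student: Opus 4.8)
The plan is to obtain this statement by unwinding the definition of the topological orbit dimension, using three elementary facts: that for fixed $k$ the set $\Gamma_R^{(top)}(x_1,\ldots,x_n;k,\epsilon,P_1,\ldots,P_s)$ shrinks (hence its orbit covering number $o_2(\,\cdot\,,\omega)$ does not increase) when $\epsilon$ is made smaller or $s$ is made larger; that $\frak K_{top}^{(2)}(x_1,\ldots,x_n;\omega)$ is nondecreasing as $\omega\downarrow 0$ (Remark 2.3); and that, because $R>\max_j\|x_j\|$ and $R>\max_j\|y_j\|$, the quantities $\frak K_{top}^{(2)}(x_1,\ldots,x_n;\omega)$ and $\frak K_{top}^{(2)}(y_1,\ldots,y_m;\omega)$ may be computed with this single value of $R$ (Remark 2.5), i.e.\ they coincide with $\frak K_{top}^{(2)}(x_1,\ldots,x_n;\omega,R)$ and $\frak K_{top}^{(2)}(y_1,\ldots,y_m;\omega,R)$ respectively.

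To establish (i), fix $\omega>0$. By Remarks 2.3 and 2.5,
\[
\begin{aligned}
\inf_{\epsilon>0,\ s\in\Bbb N}\ \limsup_{k\to\infty}\ \frac{\log\bigl(o_2(\Gamma_R^{(top)}(x_1,\ldots,x_n;k,\epsilon,P_1,\ldots,P_s),\omega)\bigr)}{k^2}
&=\frak K_{top}^{(2)}(x_1,\ldots,x_n;\omega)\\
&\le\frak K_{top}^{(2)}(x_1,\ldots,x_n)<\alpha,
\end{aligned}
\]
so there are $\epsilon_x>0$ and $s_x\in\Bbb N$ for which the corresponding $\limsup$ is already strictly less than $\alpha$, and likewise $\epsilon_y>0$ and $s_y\in\Bbb N$ giving a value $<\beta$ for $y_1,\ldots,y_m$. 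I would then set $r(\omega)=\max\{s_x,\,s_y,\,\lceil 1/\epsilon_x\rceil,\,\lceil 1/\epsilon_y\rceil\}$; since $r(\omega)\ge s_x$ and $1/r(\omega)\le\epsilon_x$, the shrinking property gives $o_2(\Gamma_R^{(top)}(x_1,\ldots,x_n;k,1/r(\omega),P_1,\ldots,P_{r(\omega)}),\omega)\le o_2(\Gamma_R^{(top)}(x_1,\ldots,x_n;k,\epsilon_x,P_1,\ldots,P_{s_x}),\omega)$ for every $k$, whence the first displayed inequality of (i) follows, and the second in the same way.

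Part (ii) is then just the meaning of $\limsup$: a real sequence $(a_k)$ with $\limsup_k a_k<\alpha$ satisfies $a_k<\alpha$ for all $k$ past some index; applying this to the two sequences appearing in (i) produces indices $K_x$ and $K_y$, and one takes $K(r(\omega))=\max\{K_x,K_y\}$ and multiplies through by $k_1^2$, resp.\ $k_2^2$. There is no genuine difficulty here; the one place to be slightly careful is the simultaneous choice of $r(\omega)$, which must at once dominate both polynomial counts $s_x,s_y$ and be large enough that $1/r(\omega)$ undercuts both tolerances $\epsilon_x,\epsilon_y$, so that the monotonicity argument can be invoked for the two families at the same time.
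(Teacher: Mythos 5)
Your argument is correct, and it is exactly the routine unwinding of the definition that the paper itself relies on: the paper offers no separate proof of this lemma, stating only that it follows "by the definition of topological orbit dimension," and your choice of $r(\omega)$ via monotonicity in $\epsilon$ and in the polynomial count, followed by the $\limsup$ argument for (ii), is the intended justification. No issues.
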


\begin{lemma}
Suppose that $\mathcal A$ and $\mathcal B$ are two unital C$^*$
algebras and $x_1 , \ldots, x_n$, or $y_1, \ldots,  y_m$ is a family
of self-adjoint elements that generates $\mathcal A$, or $ \mathcal
B$ respectively.

Let $R>\max\{\|x_1\|, \ldots,\|x_n\|,  \|y_1\|,\ldots,\|y_m\|\}$ be
a positive number. For any $\omega>0$, $r_0\in \Bbb N$, there is
some $t>0$ so that the following holds: $\forall \ r>t$, $\forall \
k\ge 1$, if
$$
  (X_1,\ldots, X_n, Y_1,\ldots, Y_m) \in \Gamma_R^{(top)}(x_1\oplus 0 ,  \ldots,x_n \oplus  0 ,  0 \oplus y_1, \ldots,
    0 \oplus y_m;k,\frac 1 r,
      S_1,\ldots,S_r),
$$ then there are
$$
   \begin{aligned}
       (A_1,\ldots, A_n) &\in \Gamma_R^{(top)}(x_1  ,\ldots,x_n ;k_1,\frac 1 {r_0},
      P_1,\ldots,P_{r_0}),\\
        (B_1,\ldots, B_m) &\in \Gamma_R^{(top)}(y_1  ,\ldots,y_m ;k_2,\frac 1 {r_0},
      Q_1,\ldots,Q_{r_0})
   \end{aligned}
$$ and $U\in \mathcal U(k)$ so that (i) $k_1+k_2=k$; and (ii)
$$
  \left \| (X_1,\ldots, X_n, Y_1,\ldots, Y_m) -U^* ( A_1\oplus  0 ,\ldots, A_n\oplus 0 ,  0 \oplus  B_1, \ldots,  0 \oplus B_m     )   U        \right
  \|< \omega.
$$

\end{lemma}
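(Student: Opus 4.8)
The plan is to argue by contradiction through a C$^{*}$-algebra ultraproduct, in the spirit of the proof of Lemma 3.6. Suppose the assertion fails: there exist $\omega>0$ and $r_{0}\in\mathbb N$ so that for every $t$ one can find $r>t$, an integer $k_{r}$, and a tuple
$$
(X_1^{(r)},\ldots,X_n^{(r)},Y_1^{(r)},\ldots,Y_m^{(r)})\in\Gamma_R^{(top)}(x_1\oplus 0,\ldots,x_n\oplus 0,0\oplus y_1,\ldots,0\oplus y_m;k_{r},\tfrac1r,S_1,\ldots,S_r)
$$
admitting no decomposition of the stated form. After passing to a subsequence assume $r\to\infty$, fix a free ultrafilter $\gamma$ on $\mathbb N$, and form $\mathcal N=\prod_{r}^{\gamma}\mathcal M_{k_{r}}(\mathbb C)$. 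Put $a_i=[(X_i^{(r)})_r]$ and $b_j=[(Y_j^{(r)})_r]$. Since $\{S_r\}$ enumerates all rational-coefficient polynomials and the tolerances $\tfrac1r$ tend to $0$, along $\gamma$ one gets $\|P(a_1,\ldots,a_n,b_1,\ldots,b_m)\|=\|P(x_1\oplus 0,\ldots,0\oplus y_m)\|$ for every noncommutative polynomial $P$, so there is a unital $*$-isomorphism $\psi$ from $\mathcal C:=C^{*}(x_1\oplus 0,\ldots,x_n\oplus 0,0\oplus y_1,\ldots,0\oplus y_m)\subseteq\mathcal A\oplus\mathcal B$ onto $C^{*}(a_1,\ldots,a_n,b_1,\ldots,b_m)\subseteq\mathcal N$ with $\psi(x_i\oplus 0)=a_i$ and $\psi(0\oplus y_j)=b_j$.

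The decisive step is to produce a projection $e\in\mathcal N$, given by genuine projections $e_{r}\in\mathcal M_{k_{r}}(\mathbb C)$, with $ea_i=a_ie=a_i$ for all $i$ and $eb_j=b_je=0$ for all $j$. Because $(x_i\oplus 0)(0\oplus y_j)=0$, the microstate conditions force $\|X_i^{(r)}Y_j^{(r)}\|\to 0$ along $\gamma$; hence, writing $\Delta_{r}=\sum_i(X_i^{(r)})^{2}$ and $\Theta_{r}=\sum_j(Y_j^{(r)})^{2}$, we have $\|\Delta_{r}\Theta_{r}\|\to 0$ along $\gamma$. Take $e_{r}=\chi_{(\eta_{r},\infty)}(\Delta_{r})$ with $\eta_{r}=\sqrt{\|\Delta_{r}\Theta_{r}\|}+\lambda_{\min}(\Delta_{r})\to 0$ along $\gamma$. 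From $e_{r}\le\eta_{r}^{-1}\Delta_{r}$ and the standard inequality $\|\Delta_{r}^{1/2}\Theta_{r}^{1/2}\|^{2}\le\|\Delta_{r}\Theta_{r}\|$ one gets $\|e_{r}\Theta_{r}e_{r}\|\le\eta_{r}^{-1}\|\Delta_{r}\Theta_{r}\|\to 0$, whence $\|e_{r}Y_j^{(r)}\|\to 0$; and from $\|\Delta_{r}(1-e_{r})\|\le\eta_{r}$ one gets $\|(1-e_{r})X_i^{(r)}\|\to 0$, i.e.\ $\|e_{r}X_i^{(r)}-X_i^{(r)}\|\to 0$ (all along $\gamma$). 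A bump-function argument (approximating functions supported near $0$ by the polynomials in the list) shows that the relations also force $\operatorname{spec}(\Delta_{r})$ to meet $[0,\eta_{r}]$, and, provided the $x_i$ do not all vanish, $\|\Delta_{r}\|$ to stay bounded below; hence $1\le\operatorname{rank}(e_{r})\le k_{r}-1$ along $\gamma$, and we set $k_1^{(r)}=\operatorname{rank}(e_{r})$, $k_2^{(r)}=k_{r}-k_1^{(r)}$. (The degenerate cases in which all the $x_i$, or all the $y_j$, vanish are disposed of directly by taking $e_{r}$ of rank one at a common approximate null vector.)

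Now $e$ commutes with and acts as a unit on $C^{*}(a_1,\ldots,a_n)\cong C^{*}(x_1,\ldots,x_n)$ (the C$^{*}$-subalgebra generated \emph{without} adjoining a unit), so $C^{*}(a_1,\ldots,a_n,e)$ is isometrically $*$-isomorphic to the unitization of the latter; since $x_1,\ldots,x_n$ generate $\mathcal A$ as a \emph{unital} C$^{*}$-algebra, this unitization is $\mathcal A$ itself, yielding an isometric isomorphism $\phi_{\mathcal A}:\mathcal A\to C^{*}(a_1,\ldots,a_n,e)\subseteq e\mathcal Ne$ with $\phi_{\mathcal A}(x_i)=a_i$ and $\phi_{\mathcal A}(1_{\mathcal A})=e$; symmetrically one obtains $\phi_{\mathcal B}:\mathcal B\to C^{*}(b_1,\ldots,b_m,1-e)\subseteq(1-e)\mathcal N(1-e)$. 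Identifying $e_{r}\mathcal M_{k_{r}}(\mathbb C)e_{r}\cong\mathcal M_{k_1^{(r)}}(\mathbb C)$ and $(1-e_{r})\mathcal M_{k_{r}}(\mathbb C)(1-e_{r})\cong\mathcal M_{k_2^{(r)}}(\mathbb C)$, put $A_i^{(r)}=e_{r}X_i^{(r)}e_{r}$ and $B_j^{(r)}=(1-e_{r})Y_j^{(r)}(1-e_{r})$; then $[(A_i^{(r)})_r]=\phi_{\mathcal A}(x_i)$ and $[(B_j^{(r)})_r]=\phi_{\mathcal B}(y_j)$. Because $\phi_{\mathcal A},\phi_{\mathcal B}$ are isometric, $\|P_l(A^{(r)})\|\to\|P_l(x)\|$ and $\|Q_l(B^{(r)})\|\to\|Q_l(y)\|$ along $\gamma$ for each $l$; together with $\|A_i^{(r)}\|,\|B_j^{(r)}\|\le R$ this gives, for $\gamma$-most $r$, $(A_1^{(r)},\ldots,A_n^{(r)})\in\Gamma_R^{(top)}(x_1,\ldots,x_n;k_1^{(r)},\tfrac1{r_{0}},P_1,\ldots,P_{r_{0}})$ and $(B_1^{(r)},\ldots,B_m^{(r)})\in\Gamma_R^{(top)}(y_1,\ldots,y_m;k_2^{(r)},\tfrac1{r_{0}},Q_1,\ldots,Q_{r_{0}})$.

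Finally choose $V_{r}\in\mathcal U(k_{r})$ with $V_{r}e_{r}V_{r}^{*}=\operatorname{diag}(I_{k_1^{(r)}},0)$. Using $\|X_i^{(r)}-e_{r}X_i^{(r)}e_{r}\|\to 0$ and $\|Y_j^{(r)}-(1-e_{r})Y_j^{(r)}(1-e_{r})\|\to 0$, the matrices $V_{r}X_i^{(r)}V_{r}^{*}$ and $V_{r}Y_j^{(r)}V_{r}^{*}$ are, up to errors tending to $0$ along $\gamma$, of the forms $\widetilde A_i^{(r)}\oplus 0$ and $0\oplus\widetilde B_j^{(r)}$, where $(\widetilde A^{(r)})$ and $(\widetilde B^{(r)})$ are unitary conjugates of $(A^{(r)})$ and $(B^{(r)})$ and hence still lie in the respective microstate spaces, which are unitarily invariant. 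Putting $U_{r}=V_{r}^{*}$, we conclude that for $\gamma$-most $r$, hence for at least one $r$, the tuple $(X^{(r)},Y^{(r)})$ does admit a decomposition of the required form with error $<\omega$, contradicting the choice of these tuples and finishing the proof. The step I expect to be the main obstacle is exactly the construction of $e$: one needs the separating projection to control the microstates in \emph{operator} norm (so a trace-ultraproduct/SAW$^{*}$ argument is not by itself enough), while keeping $k_1^{(r)},k_2^{(r)}\ge1$ and making the corners land in the microstate spaces cut out by the coarser data $(\tfrac1{r_{0}},P_1,\ldots,P_{r_{0}})$, which is precisely why the estimates must be routed through the reconstructed isometric isomorphisms $\phi_{\mathcal A},\phi_{\mathcal B}$ rather than through a naive compression, and why the unitization subtlety (when $1_{\mathcal A}$ does not lie in the non-unital C$^{*}$-algebra generated by the $x_i$) has to be addressed.
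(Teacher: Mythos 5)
Your overall strategy (contradiction via a C$^*$-ultraproduct of matrix algebras, then cutting the microstates by a projection that acts as a unit on the $X$-side and annihilates the $Y$-side) is reasonable and close in spirit to the paper's other ultraproduct arguments, and your norm estimates $\|e_rY_j^{(r)}\|\to 0$, $\|(1-e_r)X_i^{(r)}\|\to 0$ for the spectral projection $e_r=\chi_{(\eta_r,\infty)}(\Delta_r)$ are correct. (The paper itself gives no details for this lemma; its proof is a one-line reference to Lemma 4.2 of \cite{HaSh2}.) However, your decisive step has a genuine gap: compressing by this support-type projection does not in general land in the microstate spaces, because the polynomials $P_l,Q_l$ have constant terms and the unit of the corner is $e_r$, which need not correspond to $1_{\mathcal A}\oplus 0$. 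Concretely, take $\mathcal A=\mathbb C^2$ with $x_1=(1,0)$ and $\mathcal B=\mathbb C$ with $y_1=1$, and the exact microstates $X=I_p\oplus 0_q\oplus 0_s$, $Y=0_p\oplus 0_q\oplus I_s$ with $p,q,s\ge 1$; these lie in $\Gamma_R^{(top)}(x_1\oplus 0,\,0\oplus y_1;k,\frac1r,S_1,\ldots,S_r)$ for every $r$. Here $\|\Delta_r\Theta_r\|=0$ and $\lambda_{\min}(\Delta_r)=0$, so $\eta_r=0$ and $e_r=I_p\oplus 0\oplus 0$; the $\mathcal A$-corner is $A=I_p$, for which $\|e_r-A\|=0$ while $\|1_{\mathcal A}-x_1\|=1$, and the $\mathcal B$-corner is $B=0_q\oplus I_s$, for which $\|1_{k_2}-B\|=1$ while $\|1_{\mathcal B}-y_1\|=0$. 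So neither corner satisfies the required norm conditions (for any $r_0$ large enough that $1-X_1$ and $1-Y_1$ appear in the lists), even though the lemma's conclusion does hold for these tuples: the correct decomposition must assign the middle $q$-block to the $\mathcal A$-side, whereas your $e_r$ always dumps the common approximate kernel onto the $\mathcal B$-side.

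The same example shows that the isomorphism through which you route the estimates does not exist: in the ultraproduct one gets $e=a_1\in C^*(a_1)$, and there is no $*$-homomorphism $\phi_{\mathcal A}$ with $\phi_{\mathcal A}(x_i)=a_i$ and $\phi_{\mathcal A}(1_{\mathcal A})=e$, since it would annihilate $1_{\mathcal A}-x_1\ne 0$. Relatedly, the assertion that the unitization of the non-unital C$^*$-algebra generated by $x_1,\ldots,x_n$ ``is $\mathcal A$ itself'' fails whenever that algebra already contains $1_{\mathcal A}$, or has a unit different from $1_{\mathcal A}$ (as above). What is really needed is a projection playing the role of $1_{\mathcal A}\oplus 0$; since this element need not belong to the C$^*$-algebra generated by $x_1\oplus 0,\ldots,0\oplus y_m$ (e.g.\ $\mathcal A=\mathcal B=C[0,1]$ with $x=y=t$), it cannot be manufactured from the supports of the $X_i^{(r)}$ alone, and deciding how the leftover space is split between the two blocks — using the norm data that involve the unit — is precisely the content of the argument the paper imports from Lemma 4.2 of \cite{HaSh2}. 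As written, your proof does not close this step.
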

\begin{proof}The proof of this lemma is a slight modification of the one of Lemma 4.2 in
\cite{HaSh2}.

\end{proof}
\begin{theorem}
Suppose that $\mathcal A$ and $\mathcal B$ are unital C$^*$-algebras
with a family of self-adjoint generators $x_1,\ldots, x_n$, or
$y_1,\ldots, y_m$ respectively. Suppose $\mathcal D= \mathcal
A\bigoplus \mathcal B$ is the orthogonal sum of $\mathcal A$ and
$\mathcal B$ with a family of self-adjoint generators $z_1,\ldots,
z_p$. Then,
$$
\frak K_{top}^{(2)}(z_1,\ldots, z_p) \le \frak
K_{top}^{(2)}(x_1,\ldots, x_n)+ \frak K_{top}^{(2)}(y_1,\ldots,
y_m).
$$

\end{theorem}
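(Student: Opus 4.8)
The plan is to reduce to a standard generating family via Theorem 3.2, apply the decomposition Lemma 3.7 to exhibit every microstate of $\mathcal D$ as an approximate block sum of a microstate of $\mathcal A$ and one of $\mathcal B$, and then bound the orbit covering number of the $\mathcal D$-microstate space by counting over the two blocks. By Theorem 3.2, $\frak K_{top}^{(2)}$ is independent of the chosen self-adjoint generators, so it suffices to prove the inequality for the particular generators $x_1\oplus0,\ldots,x_n\oplus0,\,0\oplus y_1,\ldots,0\oplus y_m$ of $\mathcal D=\mathcal A\oplus\mathcal B$. Fix $R$ larger than all the norms $\|x_i\|,\|y_j\|$ (legitimate by Remark 2.5) and pick $\alpha>\frak K_{top}^{(2)}(x_1,\ldots,x_n)$ and $\beta>\frak K_{top}^{(2)}(y_1,\ldots,y_m)$; since covering numbers of nonempty sets are at least $1$, I may take $\alpha,\beta\ge0$. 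For a given $\omega>0$, Lemma 3.6 supplies an integer $r_0=r(\omega)$ and a threshold $K$ with $o_2(\Gamma_R^{(top)}(x_1,\ldots,x_n;k_1,\tfrac1{r_0},P_1,\ldots,P_{r_0}),\omega)\le e^{\alpha k_1^2}$ for all $k_1\ge K$, together with the analogue with $\beta,k_2,Q_1,\ldots,Q_{r_0}$ for the $y$'s. Feeding this $\omega$ and this $r_0$ into Lemma 3.7 produces a threshold $t$ beyond which its block-decomposition conclusion holds.

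I would then fix $r>t$ and $k\ge2K$ and estimate $o_2$ of $\Gamma_R^{(top)}(x_1\oplus0,\ldots,0\oplus y_m;k,\tfrac1r,S_1,\ldots,S_r)$. By Lemma 3.7 every point of this set is within operator norm $\omega$ — hence within $\|\cdot\|_2$-distance $c\omega$, $c=\sqrt{n+m}$ — of $U^*\big((A_i\oplus0)_i,(0\oplus B_j)_j\big)U$ for some $U\in\mathcal U(k)$, some microstate $(A_i)_i$ of the $x$'s at dimension $k_1$ and some microstate $(B_j)_j$ of the $y$'s at dimension $k_2$ (both at level $r_0$), with $k_1+k_2=k$. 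Covering the two smaller microstate spaces by orbit-$\omega$-balls, let $(\tilde A_i)_i,(\tilde B_j)_j$ be centers capturing $(A_i)_i,(B_j)_j$, witnessed by $W_1\in\mathcal U(k_1),W_2\in\mathcal U(k_2)$. Since $\tau_k(C\oplus0)=\tfrac{k_1}{k}\tau_{k_1}(C)$ and likewise for the other block, the weights $\tfrac{k_1}{k},\tfrac{k_2}{k}$ sum to $1$, so the block sum lies within $\|\cdot\|_2$-distance $\omega$ of the $(W_1\oplus W_2)$-conjugate of $\big((\tilde A_i\oplus0)_i,(0\oplus\tilde B_j)_j\big)$; conjugating by $U$ and applying the triangle inequality places the original microstate in the $(c+1)\omega$-orbit-$\|\cdot\|_2$-ball centered there. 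After the routine adjustment of replacing each such center by a genuine microstate of $\mathcal D$ lying in its ball and enlarging the radius to $2(c+1)\omega$ (a $k$-independent change), I obtain
$$
o_2\!\big(\Gamma_R^{(top)}(x_1\oplus0,\ldots,0\oplus y_m;k,\tfrac1r,S_1,\ldots,S_r),\,2(c+1)\omega\big)\ \le\ \sum_{k_1+k_2=k}o_2\!\big(\Gamma_R^{(top)}(x;k_1),\omega\big)\,o_2\!\big(\Gamma_R^{(top)}(y;k_2),\omega\big).
$$
In this sum, the at most $k+1$ terms with $k_1,k_2\ge K$ are each $\le e^{\alpha k_1^2+\beta k_2^2}\le e^{(\alpha+\beta)k^2}$ (using $\alpha,\beta\ge0$ and $k_1,k_2\le k$), while each of the at most $2K$ remaining terms has one factor bounded by the covering number of a norm-$R$ ball in a matrix space of bounded dimension — a constant $M_\omega$ independent of $k$ — and the other factor $\le e^{(\alpha+\beta)k^2}$; hence the left side is $\le(k+1+2KM_\omega)\,e^{(\alpha+\beta)k^2}$.

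Taking logarithms, dividing by $k^2$ and letting $k\to\infty$ gives $\limsup_k(\cdots)\le\alpha+\beta$ for every $r>t$, hence $\frak K_{top}^{(2)}(x_1\oplus0,\ldots,0\oplus y_m;2(c+1)\omega)\le\alpha+\beta$; letting $\omega\to0^+$ (monotonicity of $\frak K_{top}^{(2)}(\cdot;\omega)$ in $\omega$ makes the constant factor $2(c+1)$ irrelevant) and then $\alpha\downarrow\frak K_{top}^{(2)}(x_1,\ldots,x_n)$, $\beta\downarrow\frak K_{top}^{(2)}(y_1,\ldots,y_m)$ yields the inequality for the block generators, and Theorem 3.2 rewrites the left side as $\frak K_{top}^{(2)}(z_1,\ldots,z_p)$. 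The substantive content — that a microstate of the direct sum is nearly block diagonal with microstate blocks — is already isolated in Lemma 3.7, so I expect the main obstacle to be the bookkeeping of the middle paragraph: verifying that the $O(k)$ splittings $k_1+k_2=k$, the boundary terms where one block is small, and the mismatch of normalized trace norms across blocks of unequal size all contribute only polynomially in $k$, so that nothing survives in $\log(\cdot)/k^2$ beyond $\alpha+\beta$.
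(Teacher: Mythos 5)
Your proposal is correct and follows essentially the same route as the paper: reduce to the block generators $x_1\oplus 0,\ldots,0\oplus y_m$ via Theorem 3.2, invoke the covering bounds (the paper's Lemma 3.5, your ``Lemma 3.6'') and the block-decomposition lemma (the paper's Lemma 3.6, your ``Lemma 3.7''), bound $o_2$ of the direct-sum microstate space by the sum over splittings $k_1+k_2=k$ of products of the two orbit covering numbers, and absorb the boundary terms and the $O(k)$ prefactor, which vanish after taking $\log(\cdot)/k^2$. Your handling of the trace-norm weights $\tfrac{k_1}{k},\tfrac{k_2}{k}$, the $\sqrt{n+m}$ constant, and the re-centering of balls is in fact slightly more careful than the paper's, but the argument is the same.
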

\begin{proof}
Recall that $\{S_r\}_{r=1}^\infty$, $\{P_r\}_{r=1}^\infty$, and
$\{Q_s\}_{s=1}^\infty$ respectively, are the families of
noncommutative polynomials in $\Bbb C\langle X_1,\ldots, X_n,
 Y_1,\ldots, X_m
\rangle$, $\Bbb C\langle X_1,\ldots, X_n \rangle$, and $\Bbb
C\langle Y_1,\ldots, Y_m \rangle$ respectively,     with rational
coefficients.

Let
$$
\alpha>\frak K_{top}^{(2)}(x_1,\ldots,x_n) \qquad \text { and }
\qquad \beta>\frak K_{top}^{(2)}(y_1,\ldots,y_m),
$$  and $R>\max\{\|x_1\|,\ldots, \|x_n\| y_1\|,\ldots,\| y_m\|\}$ be a
positive number. By definition, the values of  topological orbit
dimension $\frak K_{top}^{(2)}$ can only   be $-\infty$ or  $\ge 0$.
Without loss of generality, we can assume that $\alpha>0$ and
$\beta>0$. By Lemma 3.5, for any $ \omega>0$, there are
$r(\omega)\in \Bbb N$ and $K(r(\omega))\in \Bbb N$ satisfying
   \begin{align}
       & { o_2(\Gamma_R^{(top)}(x_1,\ldots,x_n;k_1,\frac 1 {r(\omega)},
      P_1,\ldots,P_{r(\omega)}),\omega) } < e ^{\alpha{ k_1^2}}, \quad \forall \ k_1\ge K( r(\omega));\\
         &   { o_2(\Gamma_R^{(top)}(y_1,\ldots,y_m;k_2,\frac 1 {r(\omega)},
      Q_1,\ldots,Q_{r(\omega)}),\omega)} <e^{\beta{k_2^2}}, \quad \forall \ k_2\ge
      K( r(\omega)).
   \end{align}
On the other hand, for each $ \omega > 0$ and $r(\omega)\in \Bbb N$,
it follows from Lemma 3.6 that there is some $t\in \Bbb N$ so that
$\forall \ r>t$, $\forall \ k\ge 1$, if
$$
  (X_1,\ldots, X_n, Y_1,\ldots, Y_m) \in \Gamma_R^{(top)}(x_1\oplus 0 ,  \ldots,x_n \oplus  0 ,  0 \oplus y_1, \ldots,
    0 \oplus y_m;k,\frac 1 r,
      S_1,\ldots,S_r),
$$ then there are
$$
   \begin{aligned}
       (A_1,\ldots, A_n) &\in \Gamma_R^{(top)}(x_1  ,\ldots,x_n ;k_1,\frac 1 {r_\omega},
      P_1,\ldots,P_{r_\omega}),\\
        (B_1,\ldots, B_m) &\in \Gamma_R^{(top)}(y_1  ,\ldots,y_m ;k_2,\frac 1 {r_\omega},
      Q_1,\ldots,Q_{r_\omega})
   \end{aligned}
$$ and $U\in \mathcal U(k)$ so that (i) $k_1+k_2=k$; and (ii)
$$
  \left \| (X_1,\ldots, X_n, Y_1,\ldots, Y_m) -U^* ( A_1\oplus  0 ,\ldots, A_n\oplus 0 ,  0 \oplus  B_1, \ldots,  0 \oplus B_m     )   U        \right
  \|< \omega.
$$
It follows that
   \begin{align}
     o_2(\Gamma_R^{(top)}&(x_1\oplus 0 ,  \ldots,x_n \oplus  0 ,  0 \oplus y_1, \ldots,
    0 \oplus y_m;k,\frac 1 r,
      S_1,\ldots,S_r),3\omega)\notag\\
      &\le \sum_{k_1+k_2=k} \left ( o_2(\Gamma_R^{(top)}(x_1  ,\ldots,x_n ;k_1,\frac 1 {r_\omega},
      P_1,\ldots,P_{r_\omega}),\omega) \right .\notag\\
      & \quad \qquad \qquad \qquad \left . \cdot\  o_2(\Gamma_R^{(top)}(y_1  ,\ldots,y_m ;k_2,\frac 1 {r_\omega},
      Q_1,\ldots,Q_{r_\omega}),\omega) \right )\notag\\
      & = \left (
      \sum_{k_1=1}^{K(r(\omega))}+\sum_{k_1=K(r(\omega))+1}^{k-K(r(\omega))-1}+\sum_{k_1=k-K(r(\omega))}^k\right) \left ( o_2(\Gamma_R^{(top)}(x_1  ,\ldots,x_n ;k_1,\frac 1 {r_\omega},
      P_1,\ldots,P_{r_\omega}),\omega) \right .\notag\\
      & \quad \qquad \qquad \qquad \qquad \qquad \qquad   \left . \cdot\ o_2(\Gamma_R^{(top)}(y_1  ,\ldots,y_m ;k_2,\frac 1 {r_\omega},
      Q_1,\ldots,Q_{r_\omega}),\omega) \right )
   \end{align}
Let
$$
   \begin{aligned}
      M_\omega & = \max_{1\le k_1\le K(r(\omega))} o_2(\Gamma_R^{(top)}(x_1  ,\ldots,x_n ;k_1,\frac 1 {r_\omega},
      P_1,\ldots,P_{r_\omega}),\omega)+1,\\
      N_\omega & = \max_{1\le k_2\le K(r(\omega))} o_2( \Gamma_R^{(top)}(y_1  ,\ldots,y_m ;k_2,\frac 1 {r_\omega},
      Q_1,\ldots,Q_{r_\omega}),\omega)+1
   \end{aligned}
$$
By (3.4) and (3.5), we know that
$$
\begin{aligned}
  (3.6) & \le K(r(\omega))M_{\omega} e^{\beta k_2^2}+ K(r(\omega))N_{\omega} e^{\alpha
  k_1^2} + (k-2K(r(\omega))) \cdot (e^{\alpha k_1^2+ \beta k_2^2}+1)\\
  & \le K(r(\omega))M_{\omega} e^{\beta k^2}+ K(r(\omega))N_{\omega} e^{\alpha
  k^2} + 2k \cdot e^{(\alpha +\beta)k^2 }\\ & \le 3k \cdot e^{(\alpha +\beta)k^2
  },
\end{aligned}
$$ when $k$ is large enough. Now it is   not hard   to show that
$$
\frak K_{top}^{(2)}(x_1\oplus 0 ,  \ldots,x_n \oplus  0 ,  0 \oplus
y_1, \ldots,
    0 \oplus y_m)\le \alpha+ \beta.
$$
Thus, by Theorem 3.2, we have
$$
\frak K_{top}^{(2)}(z_1,\ldots, z_p) \le \frak
K_{top}^{(2)}(x_1,\ldots, x_m)+ \frak K_{top}^{(2)}(y_1,\ldots,
y_m),
$$ where $z_1,\ldots, z_p$ is any family of self-adjoint generators of
$\mathcal A\bigoplus \mathcal B$.
\end{proof}

The following corollary follows directly from the preceding
theorem.
\begin{corollary}
Suppose that $\mathcal A$ and $\mathcal B$ are unital C$^*$-algebras
with a family of self-adjoint generators $x_1,\ldots, x_n$, and
$y_1,\ldots, y_m$ respectively. Suppose $\mathcal D= \mathcal
A\bigoplus \mathcal B$ is the orthogonal sum of $\mathcal A$ and
$\mathcal B$ with a family of self-adjoint generators $z_1,\ldots,
z_p$. if
$$
  \frak
K_{top}^{(2)}(x_1,\ldots, x_m)=\frak K_{top}^{(2)}(y_1,\ldots,
y_m)=0,
$$ then
$$
 \frak
K_{top}^{(2)}(z_1,\ldots, z_p)=0 \qquad and \qquad
\delta_{top}(z_1,\ldots, z_p)\le 1.
$$

\end{corollary}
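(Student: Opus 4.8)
The plan is to deduce both assertions directly from the two results just proved, namely Theorem 3.5 and Theorem 3.1. First I would observe that since $\mathcal D=\mathcal A\bigoplus\mathcal B$ and $z_1,\ldots,z_p$ is a family of self-adjoint generators of $\mathcal D$, Theorem 3.5 applies verbatim and gives
$\frak K_{top}^{(2)}(z_1,\ldots,z_p)\le \frak K_{top}^{(2)}(x_1,\ldots,x_n)+\frak K_{top}^{(2)}(y_1,\ldots,y_m)$. Plugging in the hypothesis $\frak K_{top}^{(2)}(x_1,\ldots,x_n)=\frak K_{top}^{(2)}(y_1,\ldots,y_m)=0$ yields $\frak K_{top}^{(2)}(z_1,\ldots,z_p)\le 0$.

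Next I would establish the reverse inequality $\frak K_{top}^{(2)}(z_1,\ldots,z_p)\ge 0$, so that in fact equality $\frak K_{top}^{(2)}(z_1,\ldots,z_p)=0$ holds. As recorded in the proof of Theorem 3.5, the value of $\frak K_{top}^{(2)}$ is always either $-\infty$ or nonnegative: an orbit-covering number of a nonempty set is at least $1$, so its logarithm is nonnegative, while the only source of a negative value is an eventually empty norm-microstates space. The hypothesis $\frak K_{top}^{(2)}(x_1,\ldots,x_n)=0\ne-\infty$ (and likewise for $y_1,\ldots,y_m$) forces the norm-microstates spaces of the generators of $\mathcal A$ and of $\mathcal B$ to be nonempty for infinitely many $k$; the block-diagonal construction underlying Lemma 3.6 — sending matrices $A_i,B_i$ approximating $x_i,y_i$ to $A_i\oplus 0$ and $0\oplus B_i$ — then produces elements of the norm-microstates space of $(z_1,\ldots,z_p)$, so that space is nonempty for infinitely many $k$ and hence $\frak K_{top}^{(2)}(z_1,\ldots,z_p)\ne-\infty$. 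Combining with the previous paragraph gives $\frak K_{top}^{(2)}(z_1,\ldots,z_p)=0$.

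Finally, I would apply Theorem 3.1 to the self-adjoint family $z_1,\ldots,z_p$ in $\mathcal D$, obtaining $\delta_{top}(z_1,\ldots,z_p)\le \max\{\frak K_{top}^{(2)}(z_1,\ldots,z_p),1\}=\max\{0,1\}=1$, which is the second assertion. The argument then concludes.

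I do not anticipate any genuine obstacle: the corollary is a straightforward combination of Theorem 3.5 (additivity of $\frak K_{top}^{(2)}$ under orthogonal sums) with Theorem 3.1 (the comparison between $\delta_{top}$ and $\frak K_{top}^{(2)}$). The one point worth a sentence of care is the nonemptiness of the norm-microstates space of the generators of $\mathcal D$, which is needed to upgrade ``orbit dimension $\le 0$'' to ``orbit dimension $=0$''; this is supplied by the same block-diagonal construction that already drives Theorem 3.5, so no new ideas are required.
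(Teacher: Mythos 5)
Your argument is correct and takes essentially the paper's own route: the paper deduces this corollary directly from Theorem 3.5 (additivity of $\frak K_{top}^{(2)}$ over the orthogonal sum) combined with Theorem 3.1. Your extra paragraph ruling out the value $-\infty$ — using that $\frak K_{top}^{(2)}(x_1,\ldots,x_n)=0\ne-\infty$ forces nonempty microstates spaces, that the block-diagonal construction then gives nonempty microstates spaces for the generators $x_1\oplus 0,\ldots,0\oplus y_m$ of $\mathcal D$, and (implicitly, via Theorem 3.2) transferring this to $z_1,\ldots,z_p$ — carefully fills in the step the paper leaves implicit when it asserts the equality $\frak K_{top}^{(2)}(z_1,\ldots,z_p)=0$ rather than merely the inequality $\le 0$.
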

By Example 3.1 and Corollary 3.2, we have the following result.
\begin{example}
Suppose $x_1,\ldots, x_n$ is a family of self-adjoint generators
of a finite dimensional C$^*$-algebra $\mathcal B$. Then
$$
  \frak
K_{top}^{(2)}(x_1,\ldots, x_n)=0.$$
\end{example}
\begin{remark}
The  result in the preceding example will be extended to the
general case of a nuclear C$^*$-algebra in Corollary 4.1.
\end{remark}

\section{Orbit dimension capacity}
  In this section, we are going to define
the concept of ``orbit dimension capacity" of $n$-tuple of
elements in a unital C$^*$-algebra, which is an analogue of
``free dimension capacity" in \cite{Voi}.

\subsection{Modified free orbit dimension in finite von Neumann
algebras}

Let $\mathcal{M}$ be a von Neumann algebra with a  tracial state
$\tau$, and $x_{1},\ldots,x_{n}$ be self-adjoint elements in
$\mathcal{M}$. For any positive $R$ and $\epsilon$, and any $m,k$ in
$\mathbb{N}$, let
$\Gamma_{R}(x_{1},\ldots,x_{n};m,k,\epsilon;\tau)$ be the subset of $\mathcal{M}%
_{k}^{s.a}(\mathbb{C})^{n}$ consisting of all $(A_{1},\ldots,A_{n})$
in $\mathcal{M}_{k}^{s.a}(\mathbb{C})^{n}$ such that  $\max_{1\le
j\le n}\| A_j\|\le R$, and
\[
|\tau_{k}(A_{i_{1}} \cdots A_{i_{q}} )-\tau(x_{i_{1}%
} \cdots x_{i_{q}} )|<\epsilon,
\]
for all $1\leq i_{1},\ldots,i_{q}\leq n$,   and   $1\leq q\leq m$.

For any $\omega>0$, let $o_2(\Gamma_R(x_1,\ldots,
x_n;m,k,\epsilon;\tau),\omega)$ be the minimal number of
$\omega$-orbit-$\|\cdot\|_2$-balls in
$\mathcal{M}_{k}(\mathbb{C})^{n}$ that constitute  a covering of
 $\Gamma_R(x_1,\ldots, x_n;m,k,\epsilon;\tau)$.

Now we define, successively,
\[
\begin{aligned}
\frak K^{(2)}_2  (x_1,,\ldots, x_n; \omega;\tau) &= \sup_{R>0} \
\inf_{m\in \Bbb N, \epsilon>0}\limsup_{k\rightarrow \infty} \frac
{\log(o_2(\Gamma_R(x_1,\ldots,
x_n;m,k,\epsilon;\tau),\omega))}{ k^2 } \\
\frak K^{(2)}_2 (x_1,,\ldots, x_n;\tau ) &=
\limsup_{\omega\rightarrow 0^+}\frak K^{(2)}_2 (x_1,,\ldots, x_n;
\omega;\tau),
\end{aligned}
\]
where  $\mathfrak{K}^{(2)}_2 (x_{1} ,\ldots,x_{n};\tau)$   is called
the {\em modified free orbit-dimension} of \ $x_{1},\ldots,x_{n}$
with respect to the tracial state $\tau$.
\begin{remark}
If  the von Neumann algebra $\mathcal M$ with a tracial state $\tau$
is replaced by a unital C$^*$-algebra $\mathcal A$ with a tracial
state $\tau$, then $\mathfrak{K}^{(2)}_2 (x_{1} ,\ldots,x_{n};\tau)$
is still well-defined.
\end{remark}

From the previous definition, it follows directly our next result.
\begin{lemma}
Suppose $x_1,\ldots, x_n$ is a family of self-adjoint elements in a
von Neumann algebra with a tracial state $\tau$. Let $  \mathfrak{K}
_2 (x_{1} ,\ldots,x_{n};\tau)$ be the upper orbit dimension of
$x_{1} ,\ldots,x_{n}$ defined in  Definition 1 of \cite {HaSh}. We
have,   if
$$
\mathfrak{K} _2 (x_{1} ,\ldots,x_{n};\tau)=0,$$ then
$$
\mathfrak{K}^{(2)}_2 (x_{1} ,\ldots,x_{n};\tau)=0.$$
\end{lemma}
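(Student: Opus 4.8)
The plan is a direct comparison of the two definitions, so I would begin by writing out Definition 1 of \cite{HaSh} alongside the definition of $\mathfrak{K}^{(2)}_2$ given just above. The upper orbit dimension $\mathfrak{K}_2(x_1,\ldots,x_n;\tau)$ is assembled from exactly the same microstate sets $\Gamma_R(x_1,\ldots,x_n;m,k,\epsilon;\tau)\subseteq(\mathcal M_k^{s.a}(\Bbb C))^n$ and the same orbit covering numbers $o_2(\,\cdot\,,\omega)$ that enter $\mathfrak{K}^{(2)}_2$; the only formal differences are that the normalizing denominator is $-k^2\log\omega$ there (it is an honest ``dimension'') instead of $k^2$, and that the passage $\omega\to0^+$ in $\mathfrak{K}_2$ is presented as an outermost supremum over $\omega>0$. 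In other words, after taking the supremum over $R>\max_j\|x_j\|$ one may write $\mathfrak{K}_2(x_1,\ldots,x_n;\tau)=\sup_{0<\omega<1} g(\omega)$, where
$$
g(\omega)\ :=\ \sup_{R>0}\ \inf_{m\in\Bbb N,\ \epsilon>0}\ \limsup_{k\to\infty}\ \frac{\log\bigl(o_2(\Gamma_R(x_1,\ldots,x_n;m,k,\epsilon;\tau),\omega)\bigr)}{-k^2\log\omega}\ \ge\ 0
$$
and, comparing denominators (for $0<\omega<1$ the factor $-\log\omega$ is a positive constant that passes through $\sup$, $\inf$ and $\limsup$), $\mathfrak{K}^{(2)}_2(x_1,\ldots,x_n;\omega;\tau)=(-\log\omega)\,g(\omega)$ for every $\omega\in(0,1)$.

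Granting the hypothesis $\mathfrak{K}_2(x_1,\ldots,x_n;\tau)=0$, the supremum of the non-negative numbers $g(\omega)$ vanishes, hence $g(\omega)=0$ for every $\omega\in(0,1)$; therefore $\mathfrak{K}^{(2)}_2(x_1,\ldots,x_n;\omega;\tau)=(-\log\omega)\cdot0=0$ for every such $\omega$, and $\mathfrak{K}^{(2)}_2(x_1,\ldots,x_n;\tau)=\limsup_{\omega\to0^+}\mathfrak{K}^{(2)}_2(x_1,\ldots,x_n;\omega;\tau)=0$. If one is uncomfortable pushing the factor $-\log\omega$ through the $\sup$, $\inf$ and $\limsup$, the same conclusion is reached by hand: fixing $R$ and $\omega$ and choosing, for each $j\in\Bbb N$, some $m_j\in\Bbb N$ and $\epsilon_j>0$ with $\limsup_k \log(o_2(\Gamma_R(\ldots;m_j,k,\epsilon_j),\omega))/(-k^2\log\omega)<1/j$, one gets $\log(o_2(\Gamma_R(\ldots;m_j,k,\epsilon_j),\omega))\le(-\log\omega)k^2/j$ for all large $k$, so the inner $k$-limsup with denominator $k^2$ is at most $(-\log\omega)/j$; letting $j\to\infty$ kills the infimum over $m,\epsilon$, and $\sup_R$ followed by $\limsup_{\omega\to0^+}$ finishes it. (Should the reader's preferred phrasing of Definition 1 of \cite{HaSh} already use the denominator $k^2$, then $\mathfrak{K}_2$ and $\mathfrak{K}^{(2)}_2$ literally coincide and there is nothing to do.)

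I do not expect any genuine obstacle here, as the argument is pure definition-chasing; the one point that requires attention is the order of quantifiers, namely that the $\omega$-passage in $\mathfrak{K}_2$ is an outermost supremum (equivalently a monotone limit, since $o_2(\Sigma,\omega)$ grows as $\omega$ shrinks), so that $\mathfrak{K}_2=0$ forces $g(\omega)=0$ for \emph{every} $\omega\in(0,1)$ and not merely asymptotically as $\omega\to0^+$. Everything else (non-negativity of covering numbers, the harmless rescaling by the constant $-\log\omega$, and its interchange with $\inf_{m,\epsilon}$) is routine.
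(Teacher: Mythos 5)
Your argument is correct and is exactly the intended one: the paper offers no separate proof of this lemma, stating only that it ``follows directly'' from the definitions, and your definition-chasing — observing that $\mathfrak{K}_2$ and $\mathfrak{K}^{(2)}_2$ are built from the same microstate sets and the same orbit covering numbers $o_2$, differing only in the denominator $-k^2\log\omega$ versus $k^2$ and in the outermost $\sup_\omega$ versus $\limsup_{\omega\to 0^+}$, so that $\mathfrak{K}^{(2)}_2(x_1,\ldots,x_n;\omega;\tau)=(-\log\omega)\cdot 0=0$ for every $\omega\in(0,1)$ under the hypothesis — is precisely what that remark leaves to the reader. The quantifier point you flag (that $\mathfrak{K}_2=0$ forces the $\omega$-wise quantity to vanish for every $\omega$, not just asymptotically) is indeed the only thing worth checking, and you handle it correctly.
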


\subsection{Definition of orbit dimension capacity} We are are ready
to give the definition of ``orbit dimension capacity".
\begin{definition}
 Suppose that $\mathcal A$ is a unital C$^*$-algebra and $TS(\mathcal
 A)$ is the set of all tracial states of $\mathcal A$. Suppose that $x_1,\ldots, x_n$ is a family of self-adjoint elements in $\mathcal A$. Define
 $$
\frak K\frak K_2^{(2)}(x_1,\ldots, x_n) = \sup_{\tau\in TS(\mathcal
A)}
  \mathfrak{K}^{(2)}_2 (x_{1} ,\ldots,x_{n};\tau)
 $$ to be the orbit dimension capacity of $x_1,\ldots, x_n$.
\end{definition}

\subsection{Topological orbit dimension is   majorized by orbit
dimension capacity} We have the following relationship between
topological orbit dimension and orbit dimension capacity.
\begin{theorem}
Suppose that $\mathcal A$ is a unital C$^*$-algebra and $x_1,\ldots,
x_n$ is a family of self-adjoint elements in $\mathcal A$. Then
$$
\frak K_{top}^{(2)}(x_1,\ldots, x_n) \le \frak K\frak
K_{2}^{(2)}(x_1,\ldots, x_n).
$$
\end{theorem}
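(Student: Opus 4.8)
The plan is to show directly that $\frak K_{top}^{(2)}(x_1,\ldots,x_n)\le\alpha$ whenever $\alpha>\frak K\frak K_2^{(2)}(x_1,\ldots,x_n)$. Fix such an $\alpha$, a number $\omega>0$ and $R>\max_j\|x_j\|$, and let $P_1,P_2,\ldots$ enumerate all noncommutative polynomials in $X_1,\ldots,X_n$ with rational coefficients. The crucial point is the following \emph{type} statement: for every $m\in\Bbb N$ and $\delta>0$ there is $r_0\in\Bbb N$ so that, for all $r\ge r_0$ and all $k$, every $(A_1,\ldots,A_n)\in\Gamma_R^{(top)}(x_1,\ldots,x_n;k,1/r,P_1,\ldots,P_r)$ belongs to $\Gamma_R(x_1,\ldots,x_n;m,k,\delta;\tau)$ for some $\tau\in TS(\mathcal A)$. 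In words: once $r$ is large, the mixed moments of any norm-microstate are $\delta$-close (up to degree $m$) to the moments of $x_1,\ldots,x_n$ against an honest tracial state of $\mathcal A$.

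To prove this I argue by contradiction through a C$^*$-ultraproduct. If it fails, there are $m,\delta$ and, for each $j$, a tuple $(A_1^{(j)},\ldots,A_n^{(j)})\in\Gamma_R^{(top)}(x_1,\ldots,x_n;k_j,1/j,P_1,\ldots,P_j)$ whose moments up to degree $m$ stay at distance $\ge\delta$ from those of every $\tau\in TS(\mathcal A)$. Form $\prod_{j}^{\gamma}\mathcal M_{k_j}(\Bbb C)$ along a free ultrafilter $\gamma$. For each $\ell$, $\big|\,\|P_\ell(A_1^{(j)},\ldots,A_n^{(j)})\|-\|P_\ell(x_1,\ldots,x_n)\|\,\big|\le 1/j$ for $j\ge\ell$, so $\lim_{j\to\gamma}\|P(A_1^{(j)},\ldots,A_n^{(j)})\|=\|P(x_1,\ldots,x_n)\|$ for every polynomial $P$ (first for rational ones, then by norm-density of the coefficients, all of them, using the uniform bound $R$). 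Consequently $x_i\mapsto[(A_i^{(j)})_j]$ extends to an isometric unital $*$-homomorphism of the C$^*$-algebra generated by $x_1,\ldots,x_n$ into $\prod_j^\gamma\mathcal M_{k_j}(\Bbb C)$, and composing it with the limit trace $\tau_\gamma([(Y_j)_j])=\lim_{j\to\gamma}\tau_{k_j}(Y_j)$ produces a tracial state $\tau$ of $\mathcal A$ (in the case of interest $x_1,\ldots,x_n$ generate $\mathcal A$; in general one keeps track of this subalgebra and its traces) with $\tau(x_{i_1}\cdots x_{i_q})=\lim_{j\to\gamma}\tau_{k_j}(A_{i_1}^{(j)}\cdots A_{i_q}^{(j)})$ for all $i_1,\ldots,i_q$. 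Thus the moments of $(A^{(j)})$ become $\delta$-close to those of $\tau$ along $\gamma$, contradicting the choice of the $(A^{(j)})$.

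Now use the hypothesis. Since $\frak K^{(2)}_2(x_1,\ldots,x_n;\omega/2;\tau)\le\frak K^{(2)}_2(x_1,\ldots,x_n;\tau)\le\frak K\frak K_2^{(2)}(x_1,\ldots,x_n)<\alpha$ for every $\tau\in TS(\mathcal A)$ (the covering numbers $o_2(\cdot\,,\omega)$ increase as $\omega$ decreases, so the monotonicity of Remark 2.4 applies), and since $\frak K^{(2)}_2(\cdots;\omega/2;\tau)=\sup_{R'}\inf_{m,\epsilon}\limsup_k(\cdots)$, for our fixed $R$ we may choose $m_\tau\in\Bbb N$ and $\epsilon_\tau>0$ with
\[
\limsup_{k\to\infty}\ \frac{\log\big(o_2(\Gamma_R(x_1,\ldots,x_n;m_\tau,k,\epsilon_\tau;\tau),\omega/2)\big)}{k^2}\ <\ \alpha .
\]
The set $V_\tau=\{\sigma\in TS(\mathcal A):|\sigma(x_{i_1}\cdots x_{i_q})-\tau(x_{i_1}\cdots x_{i_q})|<\epsilon_\tau/2\text{ for }q\le m_\tau,\ 1\le i_l\le n\}$ is a weak-$*$ open neighborhood of $\tau$, so by weak-$*$ compactness of $TS(\mathcal A)$ there are $\tau_1,\ldots,\tau_N$ with $TS(\mathcal A)=\bigcup_{i=1}^N V_{\tau_i}$. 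Put $m=\max_i m_{\tau_i}$, $\delta=\min_i\epsilon_{\tau_i}/2$, and take $r_0$ from the type statement for this $m,\delta$. If $\sigma\in V_{\tau_i}$ then $\Gamma_R(x_1,\ldots,x_n;m,k,\delta;\sigma)\subseteq\Gamma_R(x_1,\ldots,x_n;m_{\tau_i},k,\epsilon_{\tau_i};\tau_i)$; hence, for all $r\ge r_0$ and all $k$,
\[
\Gamma_R^{(top)}(x_1,\ldots,x_n;k,1/r,P_1,\ldots,P_r)\ \subseteq\ \bigcup_{i=1}^N\Gamma_R(x_1,\ldots,x_n;m_{\tau_i},k,\epsilon_{\tau_i};\tau_i).
\]

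Finally, cover each $\Gamma_R(x_1,\ldots,x_n;m_{\tau_i},k,\epsilon_{\tau_i};\tau_i)$ by $o_2(\cdot\,,\omega/2)$ many $(\omega/2)$-orbit-$\|\cdot\|_2$-balls and recenter every such ball that meets the norm-microstate space at a point of it (which at most doubles its radius, to $\omega$); this yields
\[
o_2\big(\Gamma_R^{(top)}(x_1,\ldots,x_n;k,1/r,P_1,\ldots,P_r),\omega\big)\ \le\ \sum_{i=1}^N o_2\big(\Gamma_R(x_1,\ldots,x_n;m_{\tau_i},k,\epsilon_{\tau_i};\tau_i),\omega/2\big)\ \le\ N e^{\alpha k^2}
\]
for all large $k$. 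Taking $\frac1{k^2}\log(\cdot)$ and $\limsup_{k\to\infty}$ gives a value $\le\alpha$; since this is realized at the admissible pair $(\epsilon,r)=(1/r_0,r_0)$, we get $\inf_{\epsilon>0,r\in\Bbb N}\limsup_k\frac{\log(o_2(\Gamma_R^{(top)}(x_1,\ldots,x_n;k,\epsilon,P_1,\ldots,P_r),\omega))}{k^2}\le\alpha$, and then $\sup_R$ followed by $\limsup_{\omega\to0^+}$ gives $\frak K_{top}^{(2)}(x_1,\ldots,x_n)\le\alpha$. The decisive step is the ultraproduct argument of the second paragraph: it is exactly where the \emph{norm} conditions in Voiculescu's topological microstates (imposed on a growing list of all rational polynomials) are used — to upgrade asymptotic matricial data into an actual $*$-representation carrying a trace. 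The rest — the compactness extraction of $\tau_1,\ldots,\tau_N$ and the subadditivity of orbit-covering numbers over a finite union (with the harmless factor $2$ in $\omega$) — is routine.
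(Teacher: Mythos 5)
Your proposal is correct, but it proves the inequality in the contrapositive direction with genuinely different machinery than the paper. The paper adapts Voiculescu's capacity argument: it assumes $\frak K_{top}^{(2)}(x_1,\ldots,x_n)>\alpha$, passes to the universal algebra $\mathcal A(n)$ (the free product of copies of $C[-R,R]$), views each norm-microstate $\eta$ as giving a trace $\tau_\eta=\tfrac1k Tr\circ\psi_\eta$ on $\mathcal A(n)$, pigeonholes the microstate space over partitions of the compact metrizable trace space $TS(\mathcal A(n))$ into pieces of diameter $1/s$, keeps a piece carrying at least a $1/L(\epsilon)$ fraction of the orbit-covering number, and extracts a limit trace $\tau$ (factoring through $\mathcal A$) for which $\Gamma'(q_{s(t)})\subseteq\Gamma_R(x_1,\ldots,x_n;k,m,\epsilon;\tau)$, so that a single $\tau$ witnesses $\frak K_2^{(2)}(x_1,\ldots,x_n;\tau)>\alpha$. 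You instead bound $\frak K_{top}^{(2)}$ from above: a C$^*$-ultraproduct lemma showing that for large $r$ every norm-microstate is a trace-microstate $\Gamma_R(\cdot;m,k,\delta;\tau)$ for \emph{some} $\tau\in TS(\mathcal A)$, a finite weak-$*$ cover of $TS(\mathcal A)$ by moment-neighborhoods $V_{\tau_i}$ (Banach--Alaoglu), and subadditivity of orbit-covering numbers over the resulting finite union; your recentering/radius-doubling step is exactly what is needed to respect the convention that covering balls are centered in the covered set, and the unitary invariance of $\|\cdot\|_2$ makes it work for orbit-balls. What each route buys: the paper's argument needs only a subsequence of dimensions $k_q$ and produces one explicit trace dominating the growth, which is slightly more informative than a bound by the supremum; yours avoids $\mathcal A(n)$ and the metrization of its trace space, yields the clean quantitative estimate $o_2(\Gamma_R^{(top)},\omega)\le N e^{\alpha k^2}$, and handles $TS(\mathcal A)=\varnothing$ automatically (the ultraproduct lemma then forces the norm-microstate spaces to be eventually empty). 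Two small caveats, shared with the paper rather than particular to you: the trace constructed really lives on $C^*(x_1,\ldots,x_n)$, so both proofs implicitly assume the $x_i$ generate $\mathcal A$ or that such traces extend (the paper's ``$\tau\in K$'' step has the same issue); and your closing ``$\sup_R$'' is justified by the monotonicity in $R$ / Remark 2.5 rather than by the argument as literally written, which fixed one $R>\max_j\|x_j\|$.
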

 \begin{proof}
The proof is a slight modification of the one in section 3 of
\cite{Voi}. For the sake of the completeness, we also include
 Voiculescu's arguments here.

If $\frak K_{top}^{(2)}(x_1,\ldots, x_n)=-\infty$, there is nothing
to prove. We might assume that $$\frak K_{top}^{(2)}(x_1,\ldots,
x_n)>\alpha>-\infty.  $$ We will show that
$$ \frak K\frak
K_{2}^{(2)}(x_1,\ldots, x_n)=\sup_{\tau\in TS(\mathcal A)} \frak
K_{2}^{(2)}(x_1,\ldots, x_n;\tau)>\alpha.
$$

Let    $\{P_r\}_{r=1}^\infty$be a family of noncommutative
polynomials in  $\Bbb C\langle X_1,\ldots, X_n \rangle$     with
rational coefficients. Let $R>\max\{\|x_1\|,\ldots,\|x_n\|\}$. From
the assumption that $\frak K_{top}^{(2)}(x_1,\ldots, x_n)>\alpha$,
it follows that there exist a positive number $\omega_0>0$ and   a
sequence of positive integers $\{k_{q}\}_{q=1}^\infty$ with
$k_{1}<k_{2}<\cdots$,  so that for some $\alpha'>\alpha$,
$$
 \lim_{q\rightarrow\infty}\frac
{\log(o_{2}(\Gamma^{(top)}_R(x_1,\ldots,x_n;k_{ q}, \frac 1 q,
P_1,\ldots, P_q),\omega_0))}{ k_{ q}^2 }>\alpha'.
$$

 Let
$\mathcal A(n)$ be the universal unital C$^*$-algebra generated by
self-adjoint elements $a_1,\ldots,a_n$ of norm $R$, that is the
unital full free product of $n$ copies of $C[-R,R]$. A microstate
$$\eta=(A_1,\ldots, A_n)\in \Gamma^{(top)}_R(x_1,\ldots,x_n;k_{
q}, \frac1 q,P_1,\ldots, P_q) =\Gamma( q)$$ defines a unital
$*$-homomorphism $\psi_\eta \ : \ \mathcal A(n) \rightarrow \mathcal
M_{k_{ q}}(\Bbb C)$   so that $\psi_\eta(a_i)=A_i$ ($1\le i\le n$)
and a tracial state $\tau_\eta\in TS(\mathcal A(n))$ with
$$
  \tau_\eta = \frac {Tr_{k_{ q}}\circ\psi_{\eta}}{k_{ q}}.
$$
Similarly there is a $*$-homomorphism $\psi \ : \ \mathcal A(n)
\rightarrow \mathcal A$ so that $\psi(a_i)= x_i$, for $1\le i\le n$.

It is not hard to see that the weak topology on $\Omega= TS(\mathcal
A(n))$ is induced by the metric
$$
d(\tau_1,\tau_2) = \sum_{s=1}^\infty \ \sum_{(i_1,\ldots, i_s) \in
(\{1,\ldots, n\})^s} \ (2Rn)^{-s}|(\tau_1-\tau_2)(a_{i_1}\cdots
a_{i_s})|.
$$ Therefore, $\Omega$ is a compact metric space and
$$
 K_{ q}=\{\tau_\eta\in\Omega \ | \ \eta\in \Gamma( q)\}
$$ is a compact subset  of $\Omega$ because $\eta \rightarrow
\tau_\eta$ is continuous and $\Gamma( q)$ is compact. Let further
$K\subseteq \Omega$ denote the compact subset $(TS(\mathcal
A))\circ\psi.$

Given $\epsilon>0$, from the fact that $\Omega$ is compact it
follows that there is some $L(\epsilon)>0$ so that for each $  q\ge
1$,
$$
  K_{ q}=   K_{ q}^1\cup   K_{ q}^2\cup \cdots \cup
  K_{ q}^{L(\epsilon)}
$$
where each compact set $  K_{ q}^{j}$ has diameter $<\epsilon$. Let
$$
\Gamma( q,j) = \{\eta\in\Gamma( q) \ | \ \tau_\eta\in K_{ q}^{j}\}.
$$ We have
$$
   \Gamma( q) = \Gamma( q,1)\cup \cdots \cup
   \Gamma( q,L(\epsilon)).
$$ Let further $\Gamma'( q)$ denote some $\Gamma( q,j)$ such that
$$
  o_2(\Gamma'( q), \omega_0) \ge \frac
  {o_2(\Gamma( q),\omega_0)
  }{L(\epsilon)}.
$$
Thus we have
$$
    \lim_{q\rightarrow\infty} \frac{\log o_2(\Gamma'( q),
   \omega_0)}{ k_{ q}^2 }> \alpha'.
$$

Given $\epsilon$ successively the values $1, 1/2, 1/3, \ldots,
1/s,\ldots$, we can find a subsequence  $\{  q_{ s}\}_{s
=1}^\infty$ such that the chosen set $K_{ q_{ s}}^{j_s}\subseteq
K_{ q_{ s}}$ has diameter $<\frac 1 { s}$ and  the corresponding
set $\Gamma'( q_{ s})$ satisfying
$$
   \lim_{s\rightarrow\infty} \frac{\log o_2(\Gamma'( q_{ s}),
   \omega_{0})}{ k_{ q_{ s}}^2 }> \alpha'.
$$

Without loss of generality,  we can assume that
  $\tau$ is the
weak limit of some sequence $(\tau_{\eta(q_s)})_{s=1}^\infty$. Then
$\tau\in K$. In fact,
$$
    \begin{aligned}
       |\tau(Q(a_1,\ldots, a_n))| & =
       \lim_{s\rightarrow\infty}
       |\tau_{\eta( q_{ s})}(Q(a_1,\ldots, a_n))|\\
       &\le  \limsup_{s\rightarrow\infty}
       ||\psi_{\eta( q_{ s})}(Q(a_1,\ldots, a_n))||\\
       &\le  \lim_{s\rightarrow\infty} (\frac 1 {s}+ \|Q(x_1,\ldots,
       x_n)\|)\\
       &= \|Q(x_1,\ldots,
       x_n)\| \\
       &= \|\psi(Q(a_1,\ldots,
       a_n))\|.
    \end{aligned}
$$ Now it follows from the density of the polynomials $Q$ in
$\mathcal A(n)$ that $\tau\in K$.

We can further assume that there is a subsequence
$\{q_{s(t)}\}_{t=1}^\infty$ of $ \{q_{s }\}_{s=1}^\infty $ so that
the chosen set $K_{q_{s(t)}}^{j_{s(t)}}\subseteq K_{ q_{ s(t)}}$
is $\subseteq B(\tau, 1/t)$, the ball of radius $1/t$ and center
$\tau$.
Therefore,  for any $m\in \Bbb N$ and $\epsilon>0$, we have
$$
   \Gamma'( q_{ s(t)})\subseteq \Gamma_R(x_1,\ldots,x_n; k_{q_{ s(t)}},
   m ,\epsilon ; \tau)
$$ when $t$ is large enough. Thus
$$
\frak K_{2}^{(2)}(x_1,\ldots,x_n;\tau)\ge \frak
K_{2}^{(2)}(x_1,\ldots,x_n;\omega_0; \tau)\ge
\lim_{t\rightarrow\infty} \frac{\log o_2(\Gamma'( q_{ s(t)}),
   \omega_{0})}{ k_{ q_{ s(t)}}^2 }>\alpha'
$$ and hence
$$
 \frak K\frak
K_{2}^{(2)}(x_1,\ldots, x_n)=\sup_{\tau\in TS(\mathcal A)} \frak
K_{2}^{(2)}(x_1,\ldots, x_n;\tau)\ge \frak K_{top}^{(2)}(x_1,\ldots,
x_n).
$$
 \end{proof}


\subsection{Nuclear C$^*$-algebras }

Recall the definition of approximation property of a unital
C$^*$-algebra in \cite{HaSh2} as follows.

\begin{definition}
Suppose $\mathcal A$ is a unital C$^*$ algebra and $x_1,\ldots, x_n$
is a family of self-adjoint elements of $\mathcal A$ that generates
$\mathcal A$ as a C$^*$-algebra. Let    $\{P_r\}_{r=1}^\infty$be a
family of noncommutative polynomials in  $\Bbb C\langle X_1,\ldots,
X_n \rangle$     with rational coefficients. If for any
$R>\max\{\|x_1\|,\ldots,\|x_n\| \}$, $r>0$, $\epsilon>0$, there is a
sequence of positive integers $k_1<k_2<\cdots $ such that
$$
\Gamma^{(top)}_R(x_1,\ldots, x_n ; k_s,\epsilon, P_1,\ldots, P_r)
\ne \varnothing, \qquad \forall \   s\ge 1
$$ then $\mathcal A$ is called having approximation property.

\end{definition}

Now we can compute the topological free entropy dimension of a
family of self-adjoint generators in a unital nuclear C$^*$-algebra.
\begin{corollary}
Suppose $\mathcal A$ is a   unital nuclear C$^*$-algebra with a
family of self-adjoint generators $x_1,\ldots, x_n$. If $\mathcal
A$ has the approximation property in the sense of Definition 4.2,
then
$$
\frak K_{top}^{(2)}(x_1,\ldots, x_n)=0 \qquad \text { and } \qquad
\delta_{top}(x_1,\ldots, x_n) \le 1.
$$
\end{corollary}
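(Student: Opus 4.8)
The plan is to read Corollary 4.1 off Theorem 4.1, using the fact that every tracial state of a nuclear C$^*$-algebra gives rise, through the GNS construction, to a hyperfinite von Neumann algebra. In outline: I would first establish $\frak K\frak K_2^{(2)}(x_1,\ldots,x_n)=0$; Theorem 4.1 then forces $\frak K_{top}^{(2)}(x_1,\ldots,x_n)\le 0$; the opposite inequality is extracted from the approximation property; and finally Theorem 3.1 delivers $\delta_{top}(x_1,\ldots,x_n)\le 1$.

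To compute the capacity, I would fix an arbitrary $\tau\in TS(\mathcal A)$, form the GNS representation $(\pi_\tau,H_\tau)$, and set $M_\tau=\pi_\tau(\mathcal A)''$, a finite von Neumann algebra carrying the normal trace that extends $\tau$. Because $\mathcal A$ is nuclear, $\mathcal A^{**}$ is injective, hence so is its normal $*$-homomorphic image $M_\tau$; by Connes' theorem $M_\tau$ is then hyperfinite. Regarding $x_1,\ldots,x_n$ as elements of $M_\tau$ and invoking the vanishing of the upper free orbit-dimension for generators of a hyperfinite von Neumann algebra from \cite{HaSh}, I get $\frak K_2(x_1,\ldots,x_n;\tau)=0$; Lemma 4.1 then gives $\frak K^{(2)}_2(x_1,\ldots,x_n;\tau)=0$. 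Since the microstate sets $\Gamma_R(x_1,\ldots,x_n;m,k,\epsilon;\tau)$ depend only on the mixed moments $\tau(x_{i_1}\cdots x_{i_q})$, this value is the same whether the $x_j$ are viewed in $\mathcal A$ or in $M_\tau$; taking the supremum over $\tau$ yields $\frak K\frak K_2^{(2)}(x_1,\ldots,x_n)=0$, and Theorem 4.1 gives $\frak K_{top}^{(2)}(x_1,\ldots,x_n)\le 0$.

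For the matching lower bound I would fix $0<\omega<1$, $R>\max_{1\le j\le n}\|x_j\|$, $\epsilon>0$ and $r\in\Bbb N$, and use the approximation property (Definition 4.2) to find $k_1<k_2<\cdots$ along which $\Gamma^{(top)}_R(x_1,\ldots,x_n;k_s,\epsilon,P_1,\ldots,P_r)\ne\varnothing$. Then the relevant orbit covering numbers are $\ge 1$ along this subsequence, so $\limsup_{k\to\infty}\frac{\log(o_2(\Gamma^{(top)}_R(x_1,\ldots,x_n;k,\epsilon,P_1,\ldots,P_r),\omega))}{k^2}\ge 0$; since $\epsilon,r,\omega,R$ are arbitrary, $\frak K_{top}^{(2)}(x_1,\ldots,x_n)\ge 0$, and hence it equals $0$. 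Theorem 3.1 then gives $\delta_{top}(x_1,\ldots,x_n)\le\max\{\frak K_{top}^{(2)}(x_1,\ldots,x_n),1\}=1$.

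The only part that is not pure bookkeeping is the second paragraph: the implication ``$\mathcal A$ nuclear $\Rightarrow$ $M_\tau$ injective $\Rightarrow$ $M_\tau$ hyperfinite'' (Choi--Effros/Kirchberg together with Connes) combined with the \cite{HaSh} computation that hyperfinite von Neumann algebras have vanishing upper free orbit-dimension; everything else is a direct assembly of Theorem 4.1, Lemma 4.1, Theorem 3.1 and Definition 4.2. A minor point I would flag is that $TS(\mathcal A)\ne\varnothing$ under the stated hypothesis, since the approximation property embeds $\mathcal A$ unitally into a matricial C$^*$-ultraproduct $\prod_{m=1}^\gamma\mathcal M_{k_m}(\Bbb C)$, on which $[(Y_m)_m]\mapsto\lim_{m\rightarrow\gamma}\tau_{k_m}(Y_m)$ is a tracial state restricting to one on $\mathcal A$; thus the supremum defining $\frak K\frak K_2^{(2)}$ is not vacuous.
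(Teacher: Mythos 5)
Your argument follows the paper's own route: nuclearity makes each GNS von Neumann algebra $\pi_\tau(\mathcal A)''$ injective (hence hyperfinite), so the upper free orbit dimension result of \cite{HaSh} together with Lemma 4.1 gives $\frak K_2^{(2)}(x_1,\ldots,x_n;\tau)=0$ for every $\tau\in TS(\mathcal A)$, and then Theorem 4.1 and Theorem 3.1 finish the proof exactly as in the paper. Your two extra observations --- that the approximation property yields nonempty microstate spaces and hence the lower bound $\frak K_{top}^{(2)}(x_1,\ldots,x_n)\ge 0$, and that $TS(\mathcal A)\ne\varnothing$ via the matricial ultraproduct trace --- are correct refinements of points the paper leaves implicit, not a different method.
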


\begin{proof}
It is known that every representation of a nuclear C$^*$-algebra
yields an injective von Neumann algebra. From Lemma 4.1 and Theorem
2 in \cite{HaSh}, it follows that
$$
\frak K_{2}^{(2)}(x_1,\ldots, x_n;\tau)=0, \qquad \forall \ \tau\in
TS(\mathcal A),
$$ where $TS(\mathcal A)$ is the set of all tracial states of
$\mathcal A$. Then, by Theorem 4.1 we know that
$$
\frak K_{top}^{(2)}(x_1,\ldots, x_n)=0.$$ By Theorem 3.1,
$$\delta_{top}(x_1,\ldots, x_n) \le 1.
$$
\end{proof}

\begin{remark}
By \cite{BK} (see also Theorem 5.2 in Appendix), we know that a
nuclear C$^*$-algebra $\mathcal A$ has the approximation property in
the sense of Definition 4.2 if and only if $\mathcal A$ is an NF
algebra with a finite family of generators. Thus Corollary 4.1 can
be restated as the following: {\em If $\mathcal A$ is an NF algebra
with a family of self-adjoint generators $x_1,\ldots, x_n$. Then
$$
\frak K_{top}^{(2)}(x_1,\ldots, x_n)=0 \qquad \text { and } \qquad
\delta_{top}(x_1,\ldots, x_n) \le 1.
$$ }
\end{remark}

\subsection{Tensor products }

In this subsection, we are going to prove the following result.

\begin{corollary}
  Suppose that
  $\mathcal B_1$ and $\mathcal B_2$ are two unital C$^*$-algebras
  and $\mathcal A_1$, $\mathcal A_2$ with
  $1\in \mathcal A_1\subseteq \mathcal B_1, 1\in \mathcal A_2\subseteq \mathcal
  B_2$
   are
   infinite dimensional, unital, simple C$^*$-subalgebras with a
  unique tracial state. Suppose that $\mathcal B = \mathcal
  B_1\otimes_\nu \mathcal B_2$ is the C$^*$-tensor product of
  $\mathcal B_1$ and $\mathcal B_2$ with respect to a cross norm
  $\| \cdot \|_\nu$. If $\mathcal B$ has the approximation property
  in the sense of Definition 4.2, then
  $$
\frak K_{top}^{(2)}(x_1,\ldots, x_n)=0\quad \text { and } \quad
\delta_{top}(x_1,\ldots, x_n) = 1,
  $$ where $x_1,\ldots, x_n$ is any family of self-adjoint
  generators of $\mathcal B$.
\end{corollary}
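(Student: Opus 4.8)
The plan is to prove two estimates: $\frak K_{top}^{(2)}(x_1,\ldots,x_n)=0$ — which, by Theorem 3.1, already gives $\delta_{top}(x_1,\ldots,x_n)\le 1$ — and the reverse inequality $\delta_{top}(x_1,\ldots,x_n)\ge 1$; the first is an application of Theorem 4.1, the second is the real work. Fix $R>\max_j\|x_j\|$. Two consequences of the approximation property will be used: the spaces $\Gamma^{(top)}_R(x_1,\ldots,x_n;k,\epsilon,P_1,\ldots,P_r)$ are nonempty along some sequence $k\to\infty$ (so $\frak K_{top}^{(2)}(x_1,\ldots,x_n)\ge 0$ and the dimensions below are not $-\infty$), and, by the compactness argument in the proof of Theorem 4.1, $TS(\mathcal B)\ne\varnothing$. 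The structural input is this: for every $\tau\in TS(\mathcal B)$, the restriction of $\tau$ to the unital copy $\mathcal A_1\otimes 1\subseteq\mathcal B$ of $\mathcal A_1$ must be the unique tracial state $\tau_1$ of $\mathcal A_1$, which is faithful since $\mathcal A_1$ is simple; as any normal trace on $\pi_\tau(\mathcal A_1\otimes 1)''$ is determined on the $\sigma$-weakly dense subalgebra $\pi_\tau(\mathcal A_1\otimes 1)\cong\mathcal A_1$ by $\tau_1$, a short argument (the support of that trace is central, and the corner it supports is a finite factor containing an infinite dimensional algebra) shows $\pi_\tau(\mathcal A_1\otimes 1)''$ has a type ${\rm II}_1$ factor direct summand, hence contains a diffuse subalgebra. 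The same holds for $\mathcal A_2$, and since $\pi_\tau(\mathcal A_1\otimes 1)''$ and $\pi_\tau(1\otimes\mathcal A_2)''$ commute, $M_\tau:=\pi_\tau(\mathcal B)''$ contains two commuting diffuse von Neumann subalgebras.

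\textbf{The bound $\delta_{top}\le 1$.} By Theorem 4.1, $\frak K_{top}^{(2)}(x_1,\ldots,x_n)\le\frak K\frak K_2^{(2)}(x_1,\ldots,x_n)=\sup_{\tau\in TS(\mathcal B)}\frak K_2^{(2)}(x_1,\ldots,x_n;\tau)$, so it suffices to show $\frak K_2^{(2)}(x_1,\ldots,x_n;\tau)=0$ for each $\tau\in TS(\mathcal B)$. Since $M_\tau$ contains two commuting diffuse subalgebras (equivalently, a diffuse hyperfinite subalgebra whose relative commutant contains a diffuse subalgebra), the upper orbit dimension of \cite{HaSh} vanishes: $\frak K_2(x_1,\ldots,x_n;\tau)=0$ — this is the same vanishing that \cite{HaSh2} used for $C^*_{red}(F_2)\otimes_{min} C^*_{red}(F_2)$. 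By Lemma 4.1, $\frak K_2^{(2)}(x_1,\ldots,x_n;\tau)=0$, hence $\frak K_{top}^{(2)}(x_1,\ldots,x_n)\le 0$; combined with $\frak K_{top}^{(2)}(x_1,\ldots,x_n)\ge 0$ this gives $\frak K_{top}^{(2)}(x_1,\ldots,x_n)=0$, and Theorem 3.1 yields $\delta_{top}(x_1,\ldots,x_n)\le 1$.

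\textbf{The bound $\delta_{top}\ge 1$.} Here I would follow the lower-bound method of \cite{HaSh2}. Using the approximation property and the compactness argument of Theorem 4.1, extract a sequence $(A_1^{(s)},\ldots,A_n^{(s)})\in\Gamma^{(top)}_R(x_1,\ldots,x_n;k_s,\epsilon_s,P_1,\ldots,P_{r_s})$ with $k_s\to\infty$, $\epsilon_s\downarrow 0$, $r_s\uparrow\infty$, that are simultaneously approximate trace-microstates for a fixed $\tau\in TS(\mathcal B)$. Since $\tau|_{\mathcal A_1\otimes 1}=\tau_1$ and $\pi_{\tau_1}(\mathcal A_1)''$ contains a type ${\rm II}_1$ factor, $\mathcal A_1$ contains, for every $d$, a unitary $u$ with $|\tau_1(u^\ell)|$ as small as we wish for $1\le|\ell|\le d$ (apply the Kaplansky density theorem to a Haar unitary of that factor). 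The self-adjoint element $b=\tfrac12(u+u^*)\in\mathcal A_1\subseteq\mathcal B$ is a norm limit of self-adjoint rational polynomials $q_m(x_1,\ldots,x_n)$, so for a suitable diagonal choice $m=m(s)\to\infty$ the self-adjoint matrices $q_{m(s)}(A_1^{(s)},\ldots,A_n^{(s)})$ have normalized-trace moments, up to degree $d(s)\to\infty$, within $o(1)$ of the moments of the arcsine law; a moment/bump-polynomial estimate then forces the largest eigenvalue multiplicity of $q_{m(s)}(A^{(s)})$ to be $o(k_s)$. Since $q_{m(s)}(A^{(s)})$ is a polynomial in the $A_i^{(s)}$, the commutant of the tuple $(A_1^{(s)},\ldots,A_n^{(s)})$ in $\mathcal M_{k_s}(\mathbb C)$ then has complex dimension $o(k_s^2)$. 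Consequently the unitary orbit $\mathcal O_s=\{(UA_1^{(s)}U^*,\ldots,UA_n^{(s)}U^*):U\in\mathcal U(k_s)\}$, which lies inside $\Gamma^{(top)}_R(x_1,\ldots,x_n;k_s,\epsilon,P_1,\ldots,P_r)$ for every fixed $\epsilon,r$ once $s$ is large, has covering number $\nu_\infty(\mathcal O_s,\omega)\ge(c/\omega)^{(1-o(1))k_s^2}$ for some $c>0$; feeding this into Voiculescu's definition and letting first $k_s\to\infty$ and then $\omega\to 0^+$ gives $\delta_{top}(x_1,\ldots,x_n)\ge 1$.

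\textbf{The main obstacle.} The delicate step is the covering-number lower bound for $\mathcal O_s$: the orbit need not be uniformly ``thick'', since when $\mathcal B$ has property Gamma (for instance when $\mathcal B$ is a UHF algebra) the microstate tuple can have almost-central directions along which the orbit map degenerates, so one cannot simply use bi-Lipschitz-ness of the orbit map. Instead one estimates the Riemannian volume of $\mathcal O_s$ by a Weyl-integration (Vandermonde) computation and combines it with Szarek's packing estimates for $\mathcal U(k)$ (as already invoked in the proof of Lemma 3.1), keeping the auxiliary parameters under control; in the non-free cases this is carried out after a tensorial refinement of the microstates in the spirit of the UHF computation of \cite{HaSh2}. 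Granting this estimate, the two bounds give $\delta_{top}(x_1,\ldots,x_n)=1$, and we have already shown $\frak K_{top}^{(2)}(x_1,\ldots,x_n)=0$.
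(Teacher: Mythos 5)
Your upper-bound half follows the paper's route (Theorem 4.1, the vanishing of the orbit dimension from \cite{HaSh}, Lemma 4.1, then Theorem 3.1), but the key vanishing step is misstated: you deduce $\frak K_2(x_1,\ldots,x_n;\tau)=0$ from the fact that $M_\tau=\pi_\tau(\mathcal B)''$ merely \emph{contains} two commuting diffuse von Neumann subalgebras (the images of $\mathcal A_1\otimes 1$ and $1\otimes\mathcal A_2$). That implication is false as a general principle: a free group factor contains a copy of the hyperfinite II$_1$ factor, hence two commuting diffuse abelian subalgebras, yet its generators do not have vanishing upper orbit dimension. What Corollary 4 of \cite{HaSh} requires --- and what the paper actually uses --- is that two commuting diffuse subalgebras \emph{generate} $M_\tau$: one passes from $\mathcal A_i$ to $\mathcal B_i$, noting that $\pi_\tau(\mathcal B_1\otimes 1)''$ and $\pi_\tau(1\otimes\mathcal B_2)''$ are diffuse (they contain the diffuse images of $\mathcal A_1$, $\mathcal A_2$, which is the whole point of the hypotheses on $\mathcal A_i$), commute, and together generate $\pi_\tau(\mathcal B)''$ since $\mathcal B_1\otimes 1$ and $1\otimes\mathcal B_2$ generate $\mathcal B$. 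This is a one-line repair, but as written your deduction does not go through.

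The more serious gap is the lower bound. The paper does not reprove $\delta_{top}(x_1,\ldots,x_n)\ge 1$; it simply invokes Theorem 5.2 of \cite{HaSh2}, which yields this inequality for $\mathcal B$ with the approximation property in the present setting. You instead sketch a rederivation and yourself flag its decisive step --- the estimate $\nu_\infty(\mathcal O_s,\omega)\ge (c/\omega)^{(1-o(1))k_s^2}$ for the unitary orbit of a microstate tuple whose commutant has dimension $o(k_s^2)$ --- as an unresolved ``main obstacle'' to be ``granted.'' That estimate is exactly the hard content of the lower bound: a dimension count on the orbit (small stabilizer) does not by itself control its metric entropy at a fixed scale $\omega$ uniformly in $k$; one needs the volume/packing arguments of Szarek type that are carried out in \cite{HaSh2}, and your appeal to a Weyl-integration computation plus a ``tensorial refinement'' is a plan, not a proof. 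So, as it stands, the conclusion $\delta_{top}(x_1,\ldots,x_n)=1$ is not established by your argument; either quote Theorem 5.2 of \cite{HaSh2} directly, as the paper does, or supply the orbit covering estimate in full.
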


\begin{proof}
Assume that $\tau$ is a tracial state of $\mathcal B$ and $\mathcal
H$ is the Hilbert $L^2(\mathcal B,\tau)$. Let $\psi$ be the GNS
representation of $\mathcal B$ on  $\mathcal H$. Note that $\mathcal
A_1, \mathcal A_2$ are
  infinite dimensional, unital simple C$^*$-algebras with
  a unique tracial state. It is not hard to see that both $\psi(\mathcal
  A_1)$ and $\psi(\mathcal A_2)$ generate diffuse finite von Neumann
  algebras on $\mathcal H$. Thus   both $\psi(\mathcal
  B_1)$ and $\psi(\mathcal B_2)$ generate diffuse finite von Neumann
  algebras on $\mathcal H$.  Moreover, $\psi(\mathcal B_1)$ and $\psi(\mathcal
  B_2)$ commute with each other. Thus, by Corollary 4 in
  \cite{HaSh}, we have that
  $$
    \frak K_2(\psi(x_1), \ldots, \psi(x_n);\tau)=0,
  $$ where $
    \frak K_2(\psi(x_1), \ldots, \psi(x_n);\tau)
  $ is the upper free orbit dimension of $\psi(x_1), \ldots, \psi(x_n)$ (with
  respect to $\tau$) defined in \cite{HaSh}. By Lemma 3.1, we have
$$
    \frak K_2^{(2)}(\psi(x_1), \ldots, \psi(x_n);\tau)=0.
  $$ Since $\tau$ is an arbitrary tracial state of $\mathcal B$, we
  have
$$
  \frak K \frak K_2^{(2)}(x_1, \ldots, x_n)=0.
  $$ By Theorem 3.1, we have
  $$
   \frak K_{top}^{(2)}(x_1, \ldots, x_n)=0.
  $$ Therefore, $$
\delta_{top}(x_1,\ldots, x_n) \le 1.
  $$
 On the other hand, a consequence of Theorem 5.2 in \cite{HaSh2} says that
$$
\delta_{top} (x_1,\ldots,x_n)\ge 1,
$$ if $\mathcal B$ has approximation property in the sense of Definition 4.2. Hence
$$
\frak K_{top}^{(2)}(x_1,\ldots, x_n)=0\quad \text { and } \quad
\delta_{top}(x_1,\ldots, x_n) = 1,
  $$ where $x_1,\ldots, x_n$ is any family of self-adjoint
  generators of $\mathcal B$.
\end{proof}

\begin{example} Assume that $\mathcal A$ is a finite generated
unital C$^*$-algebra with the approximation property in the sense of
Definition 4.2. By Theorem 5.2, Theorem 5.4 in Appendix and
Proposition 3.1 in \cite{HaSh4}, we know that $(C_r^*(F_2)\ast_{\Bbb
C} \mathcal A)\otimes_{min} C_r^*(F_2)$ has the approximation
property, where $C_r^*(F_2)$ is the reduced C$^*$-algebra of free
group $F_2$ . Thus, by Corollary 4.2, we know that, for any family
of self-adjoint generators $x_1,\ldots, x_n$ of
$(C_r^*(F_2)\ast_{\Bbb C} \mathcal A)\otimes_{min} C_r^*(F_2)$,
$$\delta_{top}(x_1,\ldots, x_n) = 1.$$

\end{example}

Similar argument as the preceding corollary shows the following
result.

\begin{corollary}
Suppose that $\mathcal B_1$ is a unital C$^*$-algebra and $\mathcal
B_2$ is an infinite dimension, unital simple C$^*$-algebra with a
unique tracial state $\tau$. Suppose that $\mathcal B = \mathcal
  B_1\otimes_\nu \mathcal B_2$ is the C$^*$-tensor product of
  $\mathcal B_1$ and $\mathcal B_2$ with respect to a cross norm
  $\| \cdot \|_\nu$. Suppose that $z_1,\ldots,z_p$ is a family of
self-adjoint generators of $\mathcal B_2$  and $x_1,\ldots,x_n$ is a
family of self-adjoint generators of $\mathcal B$. If $\mathcal B$
has the approximation property
  in the sense of Definition 4.2 and $\frak K_2^{(2)}(z_1,\ldots,z_p;\tau)=0$, then
  $$
\frak K_{top}^{(2)}(x_1,\ldots, x_n)=0\quad \text { and } \quad
\delta_{top}(x_1,\ldots, x_n) = 1.
  $$

\end{corollary}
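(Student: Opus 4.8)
The plan is to mimic the proof of Corollary 4.2, the one change being that the diffuseness of the von Neumann algebra generated by $\mathcal B_1$ --- which in Corollary 4.2 came for free because $\mathcal B_1$ contained an infinite dimensional simple unital subalgebra with a unique trace --- is no longer available, and its place is taken by the hypothesis $\frak K_2^{(2)}(z_1,\ldots,z_p;\tau)=0$.

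First I would fix an arbitrary tracial state $\tau'\in TS(\mathcal B)$ and let $\psi : \mathcal B\to B(\mathcal H)$ be the GNS representation on $\mathcal H=L^2(\mathcal B,\tau')$; put $\mathcal M=\psi(\mathcal B)''$, $\mathcal N_1=\psi(1\otimes\mathcal B_2)''$ and $\mathcal N_2=\psi(\mathcal B_1\otimes 1)''$. Since $\mathcal B_1\otimes 1$ and $1\otimes\mathcal B_2$ commute and generate $\mathcal B$, the subalgebras $\mathcal N_1,\mathcal N_2$ are commuting von Neumann subalgebras of the finite von Neumann algebra $\mathcal M$ with $\mathcal M=(\mathcal N_1\cup\mathcal N_2)''$. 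Because $\mathcal B_2$ is a simple infinite dimensional unital C$^*$-algebra with a unique (hence faithful) trace, the restriction of $\tau'$ to the copy $1\otimes\mathcal B_2$ in $\mathcal B$ equals $\tau$; since such an algebra admits no finite dimensional $*$-representation, a minimal projection in $\mathcal N_1$ would yield one, so $\mathcal N_1$ is in fact a diffuse finite von Neumann algebra. Moreover the self-adjoint elements $\psi(1\otimes z_j)$ have, with respect to $\tau'$, exactly the $*$-distribution that $z_1,\ldots,z_p$ have with respect to $\tau$ in $\mathcal B_2$; hence, the modified free orbit-dimension depending only on the $*$-distribution, $\frak K_2^{(2)}(\psi(1\otimes z_1),\ldots,\psi(1\otimes z_p);\tau')=\frak K_2^{(2)}(z_1,\ldots,z_p;\tau)=0$.

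Next I would invoke the version of Corollary 4 of \cite{HaSh} appropriate here: whenever a finite von Neumann algebra $\mathcal M$ is generated by two commuting von Neumann subalgebras $\mathcal N_1,\mathcal N_2$ with $\mathcal N_1$ diffuse, the unitary-orbit covering estimates of \cite{HaSh} give, for any generating set of $\mathcal M$, that its modified free orbit-dimension is $\le\frak K_2^{(2)}$ of a generating set of $\mathcal N_1$; applied to $\psi(x_1),\ldots,\psi(x_n)$ and combined with the previous paragraph this gives $\frak K_2^{(2)}(\psi(x_1),\ldots,\psi(x_n);\tau')=0$. (In Corollary 4.2 both $\mathcal N_1$ and $\mathcal N_2$ were diffuse, which is why one could there simply quote that $\frak K_2(\text{a generating set of }\mathcal M;\tau')=0$ and then pass to $\frak K_2^{(2)}$ via Lemma 4.1; here only $\mathcal N_1$ is diffuse, so one works directly at the level of $\frak K_2^{(2)}$ and feeds in the hypothesis on $\mathcal B_2$.) Since $\tau'\in TS(\mathcal B)$ was arbitrary, $\frak K\frak K_2^{(2)}(x_1,\ldots,x_n)=\sup_{\tau'\in TS(\mathcal B)}\frak K_2^{(2)}(x_1,\ldots,x_n;\tau')=0$, so Theorem 4.1 gives $\frak K_{top}^{(2)}(x_1,\ldots,x_n)=0$ and Theorem 3.1 gives $\delta_{top}(x_1,\ldots,x_n)\le 1$. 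Finally, since $\mathcal B$ has the approximation property in the sense of Definition 4.2, the consequence of Theorem 5.2 in \cite{HaSh2} already used in the proof of Corollary 4.2 gives $\delta_{top}(x_1,\ldots,x_n)\ge 1$; hence $\delta_{top}(x_1,\ldots,x_n)=1$.

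The step I expect to be the main obstacle is the one carried out ``as in Corollary 4 of \cite{HaSh}'': in Corollary 4.2 one had two commuting \emph{diffuse} subalgebras and could quote that result verbatim, whereas here one must verify that the covering argument of \cite{HaSh} still produces the bound $\frak K_2^{(2)}(\mathcal M)\le\frak K_2^{(2)}(\mathcal N_1)$ when only the commuting partner $\mathcal N_1$ is diffuse --- i.e. that a diffuse subalgebra in the commutant can absorb, inside the unitary-orbit covering of the microstate spaces, the contribution of everything that commutes with it, the unavailable diffuseness of $\mathcal N_2$ being compensated by the quantitative input $\frak K_2^{(2)}(z_1,\ldots,z_p;\tau)=0$. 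Everything else --- the GNS construction, the diffuseness of $\mathcal N_1$, the $*$-distribution invariance, and the appeals to Theorems 3.1 and 4.1 and to Theorem 5.2 of \cite{HaSh2} --- is routine and identical to the proof of Corollary 4.2.
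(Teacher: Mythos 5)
Your proposal is correct and is essentially the paper's own argument: the paper offers no separate proof of this corollary beyond the phrase ``similar argument as the preceding corollary,'' and your chain (GNS representation with respect to an arbitrary $\tau'\in TS(\mathcal B)$, the identity $\tau'|_{1\otimes\mathcal B_2}=\tau$ from uniqueness of the trace, diffuseness of $\psi(1\otimes \mathcal B_2)''$, the fact that $\frak K_2^{(2)}$ depends only on the joint distribution so the hypothesis transfers to $\psi(1\otimes z_1),\ldots,\psi(1\otimes z_p)$, then Theorem 4.1, Theorem 3.1, and the lower bound $\delta_{top}\ge 1$ from Theorem 5.2 of \cite{HaSh2} via the approximation property) is exactly what is intended. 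The step you single out as the main obstacle does not require reworking the covering argument of \cite{HaSh} into a one-sided variant of its Corollary 4: the natural citation is Theorem 4 of \cite{HaSh} (the normalizer result which this paper invokes, with the same hypothesis $\frak K_2^{(2)}(z_1,\ldots,z_p;\tau)=0$, in the crossed-product Corollary 4.4), because every unitary of $\psi(\mathcal B_1\otimes 1)$ commutes with, hence normalizes, the diffuse subalgebra $\psi(1\otimes\mathcal B_2)''$, and since $\mathcal B$ is finitely generated the algebra $\mathcal B_1\otimes 1$ is separable, so countably many such unitaries together with $\psi(1\otimes\mathcal B_2)''$ generate $\psi(\mathcal B)''$; the only adaptation needed is the same routine passage from $\frak K_2$ to $\frak K_2^{(2)}$ that the paper already takes for granted in Corollary 4.4.
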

\begin{example}
Assume that $\mathcal A$ is a UHF algebra, or an irrational
rotation algebra, or $C_r^*(F_2)\otimes_{min} C_r^*(F_2)$ and
$\mathcal B$ is a finitely generated unital C$^*-$algebra with
the approximation property in the sense of Definition 4.2. Then,
by \cite{BK} or \cite{HaSh4}, $\mathcal A\otimes_{min} \mathcal
B$ has the approximation property. Suppose that $x_1,\ldots, x_n$
is a family of self-adjoint generators of $\mathcal
A\otimes_{min} \mathcal B$. Then
$$\delta_{top}(x_1,\ldots, x_n) = 1.
  $$
\end{example}
\begin{example}
By  Theorem 3.1 of \cite{HaSh4}, $C^*(F_2)\otimes_{max}
(C_r^*(F_2)\otimes_{min} C_r^*(F_2))$ has the approximation
property, where $C^*(F_2)$ is the full C$^*$-algebra of the free
group $F_2$. Suppose that $x_1,\ldots, x_n$ is a family of
self-adjoint generators of $C^*(F_2)\otimes_{max}
(C_r^*(F_2)\otimes_{min} C_r^*(F_2))$. Then
$$\delta_{top}(x_1,\ldots, x_n) = 1.
  $$
\end{example}

\subsection{Crossed products } In this subsection, we are going to prove the
following result.

\begin{corollary}
Suppose that  $ \mathcal A $ is an infinite dimensional unital
simple C$^*$-algebra with a unique tracial state $\tau$. Suppose $G$
is a countable group of   actions $\{\alpha_g\}_{g\in G}$ on
$\mathcal A$. Suppose that $\mathcal D=\mathcal A\rtimes G$ is
either full or reduced crossed product of $\mathcal A$ by the
actions of $G$. Suppose that $z_1,\ldots,z_p$ is a family of
self-adjoint generators of $\mathcal A$  and $x_1,\ldots,x_n$ is a
family of self-adjoint generators of $\mathcal D$. If $\mathcal D$
has the approximation property
  in the sense of Definition 4.2 and $\frak K_2^{(2)}(z_1,\ldots,z_p;\tau)=0$, then
  $$
\frak K_{top}^{(2)}(x_1,\ldots, x_n)=0\quad \text { and } \quad
\delta_{top}(x_1,\ldots, x_n) = 1.
  $$
\end{corollary}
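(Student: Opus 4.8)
The plan is to follow the template of the proofs of Corollary 4.2 and Corollary 4.3, the only new ingredient being a crossed‑product estimate for the modified free orbit dimension $\mathfrak{K}_2^{(2)}$. First I would fix an arbitrary tracial state $\sigma\in TS(\mathcal D)$, put $\mathcal H=L^2(\mathcal D,\sigma)$, and let $\psi$ be the GNS representation of $\mathcal D$ on $\mathcal H$. Since $\mathcal A$ is a unital simple infinite dimensional C$^*$-algebra with a unique tracial state $\tau$, the restriction $\sigma|_{\mathcal A}$ must equal $\tau$, so $\psi|_{\mathcal A}$ is a multiple of the GNS representation of $(\mathcal A,\tau)$. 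As $\mathfrak{K}_2^{(2)}(\,\cdot\,;\,\cdot\,)$ depends only on the trace together with the $\ast$-moments of its arguments, the hypothesis $\mathfrak{K}_2^{(2)}(z_1,\ldots,z_p;\tau)=0$ then transfers to give $\mathfrak{K}_2^{(2)}(\psi(z_1),\ldots,\psi(z_p);\sigma)=0$.

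Next I would record the von Neumann algebra picture. Set $N=\psi(\mathcal A)''$ and $M=\psi(\mathcal D)''$. Because $\tau$ is faithful on the simple infinite dimensional algebra $\mathcal A$ and $\mathcal A$ has no minimal projections, $N$ is a diffuse finite von Neumann algebra, and $\mathfrak{K}_2^{(2)}$ of some (hence, by its invariance, of any) family of self-adjoint generators of $N$ is zero. The canonical unitaries $\{u_g\}_{g\in G}$ implementing the action satisfy $u_g N u_g^{\ast}=N$ and, together with $N$, generate $M$, so $M$ is a von Neumann algebra crossed product of the diffuse $N$ by $G$; note that whether $\mathcal D$ is the full or the reduced crossed product, the algebra $M$ produced here is the same, so the ``full or reduced'' alternative in the hypothesis plays no role in the sequel.

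I would then invoke the orbit‑dimension estimates for crossed products from \cite{HaSh} — more precisely, the estimates for finite von Neumann algebras generated by a diffuse subalgebra together with a normalizing family — adapted to the modified quantity $\mathfrak{K}_2^{(2)}$ exactly as in the proofs of Corollary 4.1 and Corollary 4.3: from the vanishing of $\mathfrak{K}_2^{(2)}$ for $N$, the diffuseness of $N$, and the fact that each $u_g$ normalizes $N$, one obtains $\mathfrak{K}_2^{(2)}(\psi(x_1),\ldots,\psi(x_n);\sigma)=0$ for the given self-adjoint generators $x_1,\ldots,x_n$ of $\mathcal D$. Since $\sigma\in TS(\mathcal D)$ was arbitrary, Definition 4.1 yields $\mathfrak{K}\mathfrak{K}_2^{(2)}(x_1,\ldots,x_n)=0$, hence Theorem 4.1 gives $\mathfrak{K}_{top}^{(2)}(x_1,\ldots,x_n)=0$ and Theorem 3.1 gives $\delta_{top}(x_1,\ldots,x_n)\le 1$. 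Finally, as in the proof of Corollary 4.2, the hypothesis that $\mathcal D$ has the approximation property together with the consequence of Theorem 5.2 in \cite{HaSh2} gives $\delta_{top}(x_1,\ldots,x_n)\ge 1$, and therefore $\delta_{top}(x_1,\ldots,x_n)=1$.

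The main obstacle is the crossed‑product step. An orbit‑dimension estimate of the form ``$N$ diffuse and $\mathfrak{K}_2^{(2)}(N)=0$ imply $\mathfrak{K}_2^{(2)}(N\rtimes G)=0$'' cannot be expected for an arbitrary countable group $G$, so the argument must use that $\mathcal D$ having the approximation property places strong (amenability/quasidiagonality‑type) restrictions on $G$ and on its action, excluding the non‑amenable obstructions; moreover one must check that the relevant approximation arguments of \cite{HaSh}, originally phrased for $\mathfrak{K}_2$, go through for $\mathfrak{K}_2^{(2)}$, which is exactly the kind of adaptation already carried out in Corollary 4.1 and Corollary 4.3. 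Everything else — the reduction through the orbit dimension capacity and the appeals to Theorem 4.1, Theorem 3.1 and \cite{HaSh2} — is routine.
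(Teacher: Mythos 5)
Your proposal follows the paper's proof essentially verbatim: fix an arbitrary $\tau_1\in TS(\mathcal D)$, pass to its GNS representation $\psi$, use simplicity and uniqueness of the trace to get $\tau_1|_{\mathcal A}=\tau$, observe that $\psi(\mathcal A)$ generates a diffuse finite von Neumann algebra which is normalized by the canonical unitaries $\psi(g)$, apply Theorem 4 of \cite{HaSh} (adapted from $\mathfrak K_2$ to $\mathfrak K_2^{(2)}$) to conclude $\mathfrak K_2^{(2)}(\psi(x_1),\ldots,\psi(x_n);\tau_1)=0$, take the supremum over $\tau_1$ to get $\mathfrak K\mathfrak K_2^{(2)}(x_1,\ldots,x_n)=0$, and finish with Theorem 4.1, Theorem 3.1, and the lower bound $\delta_{top}\ge 1$ from Theorem 5.2 of \cite{HaSh2} via the approximation property. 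The only point to correct is the ``main obstacle'' you raise at the end: no amenability or quasidiagonality-type restriction on $G$ is needed for the crossed-product step, and the approximation property plays no role there. The normalizer theorem in \cite{HaSh} holds for an arbitrary countable family of unitaries conjugating a diffuse subalgebra with vanishing orbit dimension into itself --- that robustness under arbitrary normalizers is precisely what the orbit-dimension (as opposed to entropy-dimension) machinery is designed to provide, and it passes to $\mathfrak K_2^{(2)}$ by the same argument. The approximation property is used only to guarantee nonempty norm-microstate spaces and hence the lower bound $\delta_{top}(x_1,\ldots,x_n)\ge 1$; with that clarification your argument is the paper's argument.
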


\begin{proof}
Assume that $\tau_1$ is a tracial state of $\mathcal D$ and
$\mathcal H$ is the Hilbert $L^2(\mathcal D,\tau_1)$. Let $\psi$ be
the GNS representation of $\mathcal D$ on  $\mathcal H$. Note that
$\mathcal A$ an infinite dimensional unital simple C$^*$-algebra
with a unique tracial state $\tau$. Thus $\tau_1|_{\mathcal
A}=\tau$. It is not hard to see that $\psi(\mathcal
  A)$   generates a diffuse finite von Neumann algebra on $\mathcal H$. Moreover, for any $g\in G$,
  $$\psi(g^{-1})\psi(\mathcal A) \psi(g)\subseteq \psi(\mathcal A).$$ It follows from the fact that $\frak K_2^{(2)}(z_1,\ldots,z_p;\tau)=0$ and Theorem 4 in
  \cite{HaSh},  that
$$
    \frak K_2^{(2)}(\psi(x_1), \ldots, \psi(x_n);\tau_1)=0.
  $$ Since $\tau_1$ is an arbitrary tracial state of $\mathcal D$, we
  have
$$
  \frak K \frak K_2^{(2)}(x_1, \ldots, x_n)=0.
  $$ By Theorem 3.1, we have
  $$
   \frak K_{top}^{(2)}(x_1, \ldots, x_n)=0.
  $$ Therefore, $$
\delta_{top}(x_1,\ldots, x_n) \le 1.
  $$
 On the other hand,  a consequence of Theorem 5.2 in \cite{HaSh2} says
 that
$$
\delta_{top} (x_1,\ldots,x_n)\ge 1,
$$ if $\mathcal D$ has approximation property in the sense of Definition 4.2. Hence
$$
\frak K_{top}^{(2)}(x_1,\ldots, x_n)=0\quad \text { and } \quad
\delta_{top}(x_1,\ldots, x_n) = 1,
  $$ where $x_1,\ldots, x_n$ is any family of self-adjoint
  generators of $\mathcal D$.
\end{proof}

\begin{example}
Let $C_r^*(F_2)\otimes_{min} C_r^*(F_2)$ be the reduced
C$^*$-algebra of the
 group $F_2\times F_2$.  Let $u_1,u_2$, or
$v_1,v_2$ respectively, be the canonical unitary generators of the
left copy, or the right copy respectively,  of $C_r^*(F_2)$ and
$0<\theta <1$ be a positive number. Let $\alpha$ be a homomorphism
from $\Bbb Z$ into $Aut(C_r^*(F_2)\otimes_{min} C_r^*(F_2))$ induced
by the following mapping: $\forall \ n\in \Bbb Z$, $j=1,2$
$$
\alpha(n)(u_j)=e^{2n \pi    \theta \cdot  i} u_j \qquad and \qquad
\alpha(n)(v_j)=e^{2n\pi   \theta \cdot i} v_j.
$$ Then, by Theorem 4.2 of \cite{HaSh4}, $(C_r^*(F_2)\otimes_{min} C_r^*(F_2))\rtimes_{\alpha}\Bbb Z
$ has the approximation property. Therefore, by Corollary 4.4, we
have
$$\delta_{top}(x_1,\ldots, x_n) = 1,
  $$ where $x_1,\ldots, x_n$ is any family of self-adjoint
  generators of $(C_r^*(F_2)\otimes_{min} C_r^*(F_2))\rtimes_{\alpha}\Bbb Z$.
\end{example}
\section{Topological free entropy dimension in full free products
of unital C$^*$-algebras}

Assume that $\{\mathcal A_i\}_{i=1}^m$ ($m\ge 2$) is a family of
unital C$^*$-algebras. Recall the definition of unital full free
product of $\mathcal A_1,\ldots, \mathcal A_m$ as follows.

\begin{definition}
The unital full free product of the unital C$^*$-algebras
$\{\mathcal A_i\}_{i=1}^m$ ($m\ge 2$) is a unital C$^*$-algebra
$\mathcal D$ equipped with
 unital embedding  $\{ \sigma_i: \ \mathcal A_i\rightarrow \mathcal D\}_{i=1}^m$,  such that (i) the set
$\cup_{i=1}^m \sigma_i(\mathcal A_i) $ is norm dense in $\mathcal
D$; and (ii) if $\phi_i$   is a unital $*$-homomorphism from
$\mathcal A_i$  into a unital C$^*$-algebra $\bar{\mathcal D}$ for
$i=1,2,\ldots, m$, then there is a unital $*$-homomorphisms $\psi $
from $\mathcal D$ to $\bar{\mathcal D}$ satisfying $\phi_i=\psi
\circ \sigma_i$, for $i=1,2,\ldots,m$.
\end{definition}

In this section, for a family of positive integer $n_1,\ldots, n_m$,
we will let $\{X_j^{(i)}\}_{ 1\le i\le m; 1\le j\le n_i}$ be a
family of indeterminates and $\{P_r\}_{r=1}^\infty$ be a family of
noncommutative polynomials in $\Bbb C\langle X_j^{(i)} : 1\le i\le
m; 1\le j\le n_i\rangle$ with rational coefficients. For each  $1\le
i\le m$   and $j\ge 1$, let $ P_j^{(i)} $ be a polynomial in
$X_1^{(i)} , \ldots, X_{n_i}^{(i)}$ defined by:
$$
P_j^{(i)}( X_1^{(i)} , \ldots,  X_{n_i}^{(i)})=P_j(0,\ldots, 0,
X_1^{(i)} , \ldots,  X_{n_i}^{(i)} , 0, \ldots, 0).$$

\begin{lemma}
Suppose $\{\mathcal A_i\}_{i=1}^m$ ($m\ge 2$) is a family of unital
C$^*$-algebras  and   $\mathcal D$  is the unital full free product
of the  C$^*$-algebras  $\{\mathcal A_i\}_{i=1}^m$  equipped with
the unital
  embedding  $\{ \sigma_i:   \mathcal A_i\rightarrow
\mathcal D\}_{i=1}^m$. Suppose $\{x_j^{(i)}\}_{j=1}^{n_i}$ is a
family of self-adjoint generators of $\mathcal A_i$ for $1\le i\le
m$. Then
$$
\delta_{top}(\sigma_1(x_1^{(1)}), \ldots,
\sigma_1(x_{n_1}^{(1)}),\ldots, \sigma_m(x_1^{(m)}),\ldots,
\sigma_m(x_{n_m}^{(m)}))\le \sum_{i=1}^m
\delta_{top}(x_1^{(i)},\ldots, x_{n_i}^{(i)}).
$$
\end{lemma}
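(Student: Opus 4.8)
The plan is to show that every norm-microstate for $\mathcal D$ splits, block by block, into norm-microstates for the individual $\mathcal A_i$, and then to exploit that, for the operator norm on a direct sum, the covering number of a product set is at most the product of the covering numbers of its factors.

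First I would fix a single $R>\max_{i,j}\|x_j^{(i)}\|$; since each $\sigma_i$ is an isometric unital embedding we have $\|\sigma_i(x_j^{(i)})\|_{\mathcal D}=\|x_j^{(i)}\|_{\mathcal A_i}$, so this $R$ is admissible for every microstate space in sight, and by Remark 2.2 we may compute each $\delta_{top}$ with this fixed $R$. The key step is a containment of microstate spaces. Fix, for each $i$, a tolerance $\epsilon_i'>0$ and a finite family of rational noncommutative polynomials $Q_1^{(i)},\dots,Q_{r_i'}^{(i)}$ in the indeterminates $X_1^{(i)},\dots,X_{n_i}^{(i)}$ of the $i$-th block only. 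Regarding each $Q_s^{(i)}$ as a polynomial in all the indeterminates $\{X_j^{(l)}\}$ (one which happens to involve only the $i$-th block), and taking $r$ large enough and $\epsilon=\min_i\epsilon_i'$, we may choose the finite family $P_1,\dots,P_r$ for $\mathcal D$ so as to contain all of the $Q_s^{(i)}$; adding further constraints only shrinks the microstate space, so this is harmless. For such a choice, if $(A_1^{(1)},\dots,A_{n_m}^{(m)})$ lies in $\Gamma^{(top)}_R(\sigma_1(x_1^{(1)}),\dots,\sigma_m(x_{n_m}^{(m)});k,\epsilon,P_1,\dots,P_r)$, then for every $i$ and $s$ the quantity $\|Q_s^{(i)}(A_1^{(i)},\dots,A_{n_i}^{(i)})\|$ differs by at most $\epsilon\le\epsilon_i'$ from $\|Q_s^{(i)}(\sigma_i(x_1^{(i)}),\dots,\sigma_i(x_{n_i}^{(i)}))\|_{\mathcal D}=\|Q_s^{(i)}(x_1^{(i)},\dots,x_{n_i}^{(i)})\|_{\mathcal A_i}$, the last equality again because $\sigma_i$ is isometric; also $\|A_j^{(i)}\|\le R$. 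Hence each block $(A_1^{(i)},\dots,A_{n_i}^{(i)})$ lies in $\Gamma^{(top)}_R(x_1^{(i)},\dots,x_{n_i}^{(i)};k,\epsilon_i',Q_1^{(i)},\dots,Q_{r_i'}^{(i)})$, i.e., under the identification $(\mathcal M_k^{s.a}(\Bbb C))^{n_1+\cdots+n_m}\cong\prod_{i=1}^m(\mathcal M_k^{s.a}(\Bbb C))^{n_i}$,
\[
\Gamma^{(top)}_R(\sigma_1(x_1^{(1)}),\dots,\sigma_m(x_{n_m}^{(m)});k,\epsilon,P_1,\dots,P_r)\subseteq\prod_{i=1}^m\Gamma^{(top)}_R(x_1^{(i)},\dots,x_{n_i}^{(i)};k,\epsilon_i',Q_1^{(i)},\dots,Q_{r_i'}^{(i)}).
\]

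Next I would pass to covering numbers. On the big space the operator norm is the maximum of the operator norms on the blocks, so an $\omega$-$\|\cdot\|$-ball is precisely a product of $\omega$-$\|\cdot\|$-balls over the blocks. Taking products of optimal coverings of the factors and then, to respect the requirement that centres lie in the covered set, recentring each product ball that meets the $\mathcal D$-microstate space at a point of that space (at the cost of doubling its radius), one obtains
\[
\nu_\infty\bigl(\Gamma^{(top)}_R(\sigma_1(x_1^{(1)}),\dots,\sigma_m(x_{n_m}^{(m)});k,\epsilon,P_1,\dots,P_r),2\omega\bigr)\le\prod_{i=1}^m\nu_\infty\bigl(\Gamma^{(top)}_R(x_1^{(i)},\dots,x_{n_i}^{(i)};k,\epsilon_i',Q_1^{(i)},\dots,Q_{r_i'}^{(i)}),\omega\bigr).
\]
Taking $\log$, dividing by $-k^2\log\omega$, using that the $\limsup$ of a sum is at most the sum of the $\limsup$s, then taking the infimum over the parameters of each factor separately (the infimum of a sum of independent terms being the sum of the infima), and finally $\limsup_{\omega\to0^+}$, the left side yields $\delta_{top}(\sigma_1(x_1^{(1)}),\dots,\sigma_m(x_{n_m}^{(m)}))$ — the mismatch between $2\omega$ and $\omega$ being harmless since $\log(2\omega)/\log\omega\to1$, which is exactly the robustness of $\delta_{top}$ under bounded rescaling of $\omega$ recorded in the remark in Section 6 of \cite{Voi} (or Proposition 5.1 in \cite{HaSh2}) and already invoked in the proof of Lemma 3.1 — while the right side yields $\sum_{i=1}^m\delta_{top}(x_1^{(i)},\dots,x_{n_i}^{(i)})$. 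This is the asserted inequality.

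The only genuinely delicate point is the block-wise containment of microstate spaces, and within it the fact that the norm of a polynomial in the $i$-th block of generators is the same computed in $\mathcal D$ as in $\mathcal A_i$; this is where the hypothesis that $\mathcal D$ is the \emph{full} free product enters, via the structure maps $\sigma_i$ being (in particular) isometric. Everything else is routine bookkeeping with the suprema, infima and limit superiors in the definition of $\delta_{top}$.
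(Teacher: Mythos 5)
Your proof is correct and follows essentially the same route as the paper's: the block-wise containment of the microstate space of the full free product (for a long enough list of polynomials) inside the direct sum of the factors' microstate spaces, using that the $\sigma_i$ are isometric, followed by sub-multiplicativity of the $\nu_\infty$ covering numbers for the sup norm. Your explicit handling of the re-centering (the $2\omega$ versus $\omega$ issue) merely spells out what the paper subsumes under ``by the definition of topological free entropy dimension.''
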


\begin{proof}

Let $R>\max_{1\le i\le n, 1\le j\le n_i}\|x_j^{(i)}\| $ be a
positive number. For any $r\in\Bbb N$ and $\epsilon >0$, there is a
positive integer $r_1$ such that
$$\begin{aligned}
\Gamma_R^{(top)}&(\sigma_1(x_1^{(1)}),  \ldots,
\sigma_1(x_{n_1}^{(1)}),\ldots, \sigma_m(x_1^{(m)}),\ldots,
\sigma_m(x_{n_m}^{(m)}); k, \epsilon, P_1,\ldots,P_{r_1})\\
&\subseteq \Gamma_R^{(top)}(\sigma_1(x_1^{(1)}), \ldots,
\sigma_1(x_{n_1}^{(1)}); k, \epsilon, P_1^{(1)},\ldots,P_r^{(1)})\oplus \\
&\qquad \qquad  \cdots \oplus\Gamma_R^{(top)}(
\sigma_m(x_1^{(m)}),\ldots,
\sigma_m(x_{n_m}^{(m)}); k, \epsilon, P_1^{(m)},\ldots,P_r^{(m)})\\
&=\Gamma_R^{(top)}( x_1^{(1)} ,  \ldots,  x_{n_1}^{(1)} ; k,
\epsilon, P_1^{(1)},\ldots,P_r^{(1)})\oplus \cdots \oplus
\Gamma_R^{(top)}( x_1^{(m)} ,  \ldots, x_{n_m}^{(m)} ; k, \epsilon,
P_1^{(m)},\ldots,P_r^{(m)}),
\end{aligned}
$$ where $$
P_j^{(i)}(\sigma_i(x_1^{(i)}), \ldots,
\sigma_i(x_{n_i}^{(i)}))=P_j(0,\ldots, 0,\sigma_i(x_1^{(i)}),
\ldots, \sigma_i(x_{n_i}^{(i)}), 0, \ldots, 0), \ \ 1\le j\le r,
1\le i\le m.$$
 By the definition of topological free entropy
dimension, we get that
$$
\delta_{top}(\sigma_1(x_1^{(1)}), \ldots,
\sigma_1(x_{n_1}^{(1)}),\ldots, \sigma_m(x_1^{(m)}),\ldots,
\sigma_m(x_{n_m}^{(m)}))\le \sum_{i=1}^m
\delta_{top}(x_1^{(i)},\ldots, x_{n_i}^{(i)}).
$$

\end{proof}

\subsection{ } Suppose that $\mathcal A$ is a unital C$^*$-algebra and $x_1,\ldots, x_n$ is a family of self-adjoint elements in $\mathcal A$.
  Recall Voiculescu's semi-microstates as follows. {\em Suppose
that $\{Q_r\}_{r=1}^\infty$ is a family of noncommutative
polynomials in $\Bbb C\langle X_1,\ldots, X_n\rangle$ with rational
coefficients. Let $R,\epsilon>0$, $r,k\in \Bbb N$. Define
$$
\Gamma_R^{(top1/2))}(x_1,\ldots, x_n;k,\epsilon,Q_1,\ldots,Q_r)
$$
to be the subset of $(\mathcal M_k^{s.a}(\Bbb C))^{n }$ consisting
of all these $$ (A_1,\ldots, A_n )\in (\mathcal M_k^{s.a}(\Bbb
C))^{n}
$$ satisfying
 $$  \max \{\|A_1\|, \ldots, \|A_n\| \}\le R
 $$ and
$$
 \|Q_j(A_1,\ldots, A_n )\|\le \| Q_j(x_1,\ldots, x_n )\| + \epsilon,
\qquad \forall \ 1\le j\le r.
$$}
 It is easy to see that
$$
\Gamma_R^{(top)}(x_1,\ldots, x_n;k,\epsilon,Q_1,\ldots,Q_r)\subseteq
\Gamma_R^{(top1/2)}(x_1,\ldots, x_n;k,\epsilon,Q_1,\ldots,Q_r).
$$

\begin{lemma}
 Suppose $\{\mathcal A_i\}_{i=1}^m$ ($m\ge 2$) is a family of unital
C$^*$-algebras  and   $\mathcal D$  is the full free product of the
unital C$^*$-algebras  $\{\mathcal A_i\}_{i=1}^m$  equipped with the
unital
  embedding  $\{ \sigma_i:   \mathcal A_i\rightarrow
\mathcal D\}_{i=1}^m$. Suppose $\{x_j^{(i)}\}_{j=1}^{n_i}$ is a
family of self-adjoint generators of $\mathcal A_i$ for $1\le i\le
m$.   Let $R>\max\{\|x_j^{(i)}\|,\ 1\le i\le m, 1\le j\le n_i\}$ be
a positive number.
 For any $r_0\in\Bbb N$ and $\epsilon_0>0$, there are $r_1\in \Bbb N$ and
 $\epsilon_1>0$ such that, for
 any $k\in \Bbb N$, if
 $$
   \begin{aligned}
     (A_1^{(i)}, \ldots, A_{n_i}^{(i)}) \in \Gamma_R^{(top1/2)}( x_1^{(i)},
     \ldots,x_{n_i}^{(i)}; k, \epsilon_1,P_1^{(i)},\ldots,
     P_{r_1}^{(i)}), \  \text { for } \ 1\le i\le m,
   \end{aligned}
 $$ where $P_1^{(i)},\ldots,
     P_r^{(i)} $ is defined as in Lemma 5.1,  then $$
\begin{aligned}  (A_1^{(1)},& \ldots, A_{n_1}^{(1)},\ldots, A_1^{(m)}, \ldots, A_{n_m}^{(m)})\notag\\ & \in
\Gamma_R^{(top1/2)}(\sigma_1(x_1^{(1)}),\ldots,\sigma_1(x_{n_1}^{(1)}),
\cdots,\sigma_m(x_1^{(m)}),\ldots,\sigma_m(x_{n_m}^{(m)}) ; k,
\epsilon_0,P_1,\ldots, P_{r_0}).
\end{aligned} $$
\end{lemma}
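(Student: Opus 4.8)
The plan is to argue by contradiction, combining a C$^*$-algebra ultraproduct with the universal property of the unital full free product $\mathcal D$. Suppose the conclusion fails for some $r_0\in\Bbb N$ and $\epsilon_0>0$. Then for every $n\in\Bbb N$ (taking $r_1=n$, $\epsilon_1=1/n$) I can choose $k_n\in\Bbb N$ and tuples
$$
(A_1^{(i)},\ldots,A_{n_i}^{(i)})\in\Gamma_R^{(top1/2)}\bigl(x_1^{(i)},\ldots,x_{n_i}^{(i)};k_n,1/n,P_1^{(i)},\ldots,P_n^{(i)}\bigr),\qquad 1\le i\le m,
$$
whose concatenation does not lie in $\Gamma_R^{(top1/2)}(\sigma_1(x_1^{(1)}),\ldots,\sigma_m(x_{n_m}^{(m)});k_n,\epsilon_0,P_1,\ldots,P_{r_0})$. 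Since every $A_j^{(i)}$ already has norm $\le R$, this failure means that for some $j(n)\in\{1,\ldots,r_0\}$,
$$
\bigl\|P_{j(n)}(A_1^{(1)},\ldots,A_{n_m}^{(m)})\bigr\|>\bigl\|P_{j(n)}(\sigma_1(x_1^{(1)}),\ldots,\sigma_m(x_{n_m}^{(m)}))\bigr\|+\epsilon_0.
$$
Fixing a free ultrafilter $\gamma\in\beta(\Bbb N)\setminus\Bbb N$ and using that $\{1,\ldots,r_0\}$ is finite, I may assume $j(n)=j_0$ for all $n$ in some set belonging to $\gamma$.

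Next I would form the C$^*$-algebra ultraproduct $\mathcal Q=\prod_{n=1}^\gamma\mathcal M_{k_n}(\Bbb C)$ and set $a_j^{(i)}=[(A_j^{(i)})_n]\in\mathcal Q$, a self-adjoint element of norm $\le R$. Fix $i$. Any rational-coefficient noncommutative polynomial $Q$ in $X_1^{(i)},\ldots,X_{n_i}^{(i)}$ appears among the $P_s^{(i)}$ (take $P_s=Q$, viewed as a polynomial in all the indeterminates that happens to involve only those of the $i$-th block), so the semi-microstate hypothesis gives $\|Q(A_1^{(i)},\ldots,A_{n_i}^{(i)})\|\le\|Q(x_1^{(i)},\ldots,x_{n_i}^{(i)})\|+1/n$ for all $n\ge s$; letting $n\to\gamma$, and then invoking density of the rational-coefficient polynomials (uniformly on matrices of norm $\le R$), I obtain $\|Q(a_1^{(i)},\ldots,a_{n_i}^{(i)})\|\le\|Q(x_1^{(i)},\ldots,x_{n_i}^{(i)})\|$ for every noncommutative polynomial $Q$. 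Hence $x_j^{(i)}\mapsto a_j^{(i)}$ is well defined, unital and norm-nonincreasing on the $*$-algebra generated by $x_1^{(i)},\ldots,x_{n_i}^{(i)}$, and therefore extends to a unital $*$-homomorphism $\phi_i:\mathcal A_i\to\mathcal Q$.

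By the universal property in Definition 5.1 the maps $\phi_i$ assemble into a unital $*$-homomorphism $\phi:\mathcal D\to\mathcal Q$ with $\phi\circ\sigma_i=\phi_i$, so that $\phi\bigl(P_{j_0}(\sigma_1(x_1^{(1)}),\ldots,\sigma_m(x_{n_m}^{(m)}))\bigr)=P_{j_0}(a_1^{(1)},\ldots,a_{n_m}^{(m)})$; since $\phi$ is contractive,
$$
\bigl\|P_{j_0}(a_1^{(1)},\ldots,a_{n_m}^{(m)})\bigr\|\le\bigl\|P_{j_0}(\sigma_1(x_1^{(1)}),\ldots,\sigma_m(x_{n_m}^{(m)}))\bigr\|.
$$
On the other hand, by the definition of the ultraproduct norm, $\|P_{j_0}(a_1^{(1)},\ldots,a_{n_m}^{(m)})\|=\lim_{n\to\gamma}\|P_{j_0}(A_1^{(1)},\ldots,A_{n_m}^{(m)})\|\ge\|P_{j_0}(\sigma_1(x_1^{(1)}),\ldots,\sigma_m(x_{n_m}^{(m)}))\|+\epsilon_0$, a contradiction, which proves the lemma. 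I expect the only delicate point to be the passage from the finitely many approximate norm inequalities to exact norm domination inside $\mathcal Q$ — this is what upgrades each block of matrices to an honest $*$-homomorphism $\phi_i$ — after which the universal property of the \emph{full} free product (the step that genuinely uses ``full'' rather than ``reduced'') closes the argument at once.
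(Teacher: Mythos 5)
Your proposal is correct and follows essentially the same route as the paper: argue by contradiction, pass to the C$^*$-ultraproduct $\prod_{r=1}^\gamma\mathcal M_{k_r}(\Bbb C)$, use the one-sided semi-microstate inequalities to induce unital $*$-homomorphisms $\phi_i:\mathcal A_i\to\prod_{r=1}^\gamma\mathcal M_{k_r}(\Bbb C)$, and then invoke the universal property of the full free product to obtain a contractive $\psi:\mathcal D\to\prod_{r=1}^\gamma\mathcal M_{k_r}(\Bbb C)$ contradicting the assumed failure. Your filling in of why each block map is well defined (every rational-coefficient polynomial in the $i$-th block variables occurs among the $P_s^{(i)}$, plus density) is exactly the detail the paper leaves implicit.
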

\begin{proof}
 We will prove the result by using the contradiction. Suppose, to
 the contrary, the result does not hold. Then  there are some $r_0\in \Bbb N$ and $\epsilon_0>0$
 so that the following holds: \begin{enumerate} \item []
  for any $r\in \Bbb
 N$, there are $k_r\in \Bbb N$ and
 $$
   \begin{aligned}
     (A_1^{(i,r)}, \ldots, A_{n_i}^{(i,r)}) \in \Gamma_R^{(top1/2)}
     ( x_1^{(i)},\ldots,x_{n_i}^{(i)}; k_r, 1/r,P_1^{(i)},\ldots,
     P_r^{(i)}), \  \text { for } \ 1\le i\le m,
   \end{aligned}
 $$ satisfying
\begin{align}  (A_1^{(1,r)},& \ldots, A_{n_1}^{(1,r)},\ldots, A_1^{(m,r)}, \ldots, A_{n_m}^{(m,r)})\notag\\ & \notin
\Gamma_R^{(top1/2)}(\sigma_1(x_1^{(1)}),\ldots,\sigma_1(x_{n_1}^{(1)}),
\cdots,\sigma_m(x_1^{(m)}),\ldots,\sigma_m(x_{n_m}^{(m)}) ; k_r,
\epsilon_0,P_1,\ldots, P_{r_0}).
\end{align}
\end{enumerate}

Let $\gamma$ be a free ultra-filter in $\beta(\Bbb N)\setminus
\Bbb N$. Let $\prod_{r=1}^\gamma \mathcal M_{k_r}(\Bbb C)$ be the
C$^*$ algebra ultra-product of matrices algebras $(\mathcal
M_{k_r}(\Bbb C))_{r=1}^\infty$ along the ultra-filter $\gamma$,
i.e. $\prod_{r=1}^\gamma \mathcal M_{k_r}(\Bbb C)$ is the quotient
algebra of the unital C$^*$-algebra $\prod_r^\infty \mathcal
M_{k_r}(\Bbb C)$ by  $\mathcal I_\infty $, where $\mathcal
I_\infty= \{(Y_r)_{r=1}^\infty \in \prod_r \mathcal M_{k_r}(\Bbb
C) \ | \ \lim_{r\rightarrow \gamma} \|Y_r\| =0\} $.

Let $\phi_i$ be the unital $*$-homomorphism from the C$^*$-algebra
$\mathcal A_i$ into the C$^*$-algebra $\prod_{r=1}^\gamma \mathcal
M_{k_r}(\Bbb C)$, induced by the mapping
$$
   x_j^{(i)} \rightarrow [(A_j^{(i,r)})_r]\in \prod_{r=1}^\gamma \mathcal M_{k_r}(\Bbb C), \qquad \forall \ 1\le j\le n_i,
$$ where $[(A_j^{(i,r)})_r]$ is the image of
$(A_j^{(i,r)})_{r=1}^\infty$ in the quotient algebra
$\prod_{r=1}^\gamma \mathcal M_{k_r}(\Bbb C)$.

By the definition of full free product, we know that there is a
unital $*$-homomorphism $\psi$ from $\mathcal D$ into
$\prod_{r=1}^\gamma \mathcal M_{k_r}(\Bbb C)$ so that $\phi_i=
\psi\circ \sigma_i.$ Hence,
$$\begin{aligned}
  \lim_{r\rightarrow \gamma} &\|P_t(A_1^{(1,r)},\ldots,
  A_{n_1}^{(1,r)},\ldots, A_1^{(m,r)},\ldots,
  A_{n_m}^{(m,r)})\|\\&  \le \|P_t(\sigma_1( x_1^{(1)}),\ldots,
 \sigma_1( x_{n_1}^{(1)}),\ldots, \sigma_m(x_1^{(m)}),\ldots,
  \sigma_m(x_{n_m}^{(m)}))\|,\qquad \forall \ 1\le t\le r_0.
\end{aligned}$$    This contradicts with the fact (5.1). This completes the proof of
the lemma.
\end{proof}

Recall the definition of a stable family of elements in a unital
C$^*$-algebra in \cite {HaSh3} as follows.

\begin{definition}
Suppose that $\mathcal A$ is a unital C$^*$-algebra and $x_1,\ldots,
x_n$ is a family of self-adjoint elements in $\mathcal A$. Suppose
that $\{Q_r\}_{r=1}^\infty$ is a family of noncommutative
polynomials in $\Bbb C\langle X_1,\ldots, X_n\rangle$ with rational
coefficients. The family of elements $x_1,\ldots,x_n$ is called
stable if for any $\alpha<\delta_{top}(x_1,\ldots,x_n)$ there are
positive numbers $R, C>0$ and $\omega_0>0$, $r_0\ge 1$, $k_0\ge 1$
so that
$$
\nu_\infty(\Gamma_{R}^{(top)}(x_1,\ldots, x_n;q\cdot k_0,\frac 1 r,
Q_1,\ldots, Q_r), \omega)\ge C^{ (q\cdot k_0)^2}\left( \frac 1
\omega\right)^{\alpha \cdot  (q\cdot k_0)^2}, \forall \
0<\omega<\omega_0, r>r_0, q\in \Bbb N.
$$
\end{definition}

\begin{example}  Any family of
self-adjoint generators $ x_1,\ldots, x_n $ of a finite dimensional
C$^*$-algebra   is
 stable.
A self-adjoint element $x$ in a unital C$^*$-algebra is stable. (see
\cite {HaSh3})
\end{example}

\subsection{Main result in this  section}

Now we are ready to show the additivity of topological free
entropy dimension in the full free products of some unital
C$^*$-algebras. In this subsection, we will use the following
notation.

\begin{notation}
 Suppose that $A\in \mathcal M_{k_1}(\Bbb C)$ and $B\in \mathcal M_{k_2}(\Bbb C)$. We denote the element
  $$\left ( \begin{aligned}
  A  & \quad 0\\
  0 &\quad B
  \end{aligned} \right )\in \mathcal M_{k_1+k_2}(\Bbb C)$$ by $A\oplus B$.
  \end{notation}
The concept of MF algebras was introduced by Blackadar and Kirchberg
in \cite{BK}.  It plays an important role in the classification of
C$^*$-algebras and it is connected to the question whether the
extension, in the sense of Brown, Douglas and Fillmore, of a unital
C$^*$-algebra is a group (see the striking result of Haagerup and
Thorb{\o}rnsen on $Ext(C_r^*(F_2)$).

The following result, which follows  directly from Theorem 5.2 in
the Appendix,  shows that Voiculescu's topological free entropy
dimension is well defined on a finite generated unital MF algebras.
\begin{lemma}
Suppose that $\mathcal A$ is a finitely generated  unital Blackadar
and Kirchberg's MF C$^*$-algebra. Then $\mathcal A$ has the
approximation property in the sense of Definition 4.2.
\end{lemma}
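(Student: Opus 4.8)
The plan is to reduce the statement to the standard characterization of MF algebras. Since $\mathcal A$ is finitely generated and unital it is separable, so by Blackadar and Kirchberg's work (this is essentially the content of Theorem 5.2 in the appendix) being MF is equivalent to the existence of integers $k_1,k_2,\ldots$ and a unital isometric $*$-homomorphism
$$
\phi:\ \mathcal A\ \hookrightarrow\ \prod_{m=1}^\infty \mathcal M_{k_m}(\mathbb C)\ \Big/\ \bigoplus_{m=1}^\infty\mathcal M_{k_m}(\mathbb C),
$$
where $\bigoplus$ is the ideal of norm-null sequences. First I would fix self-adjoint generators $x_1,\ldots,x_n$ of $\mathcal A$, lift each $\phi(x_i)$ to a bounded sequence $(A_i^{(m)})_{m\ge1}$ of self-adjoint matrices $A_i^{(m)}\in\mathcal M_{k_m}(\mathbb C)$, and, by truncating with the continuous functional calculus (which alters only finitely many terms, since $\|x_i\|<R$ for the given bound $R>\max_i\|x_i\|$), arrange $\|A_i^{(m)}\|\le R$ for every $m$. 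Because $\phi$ is a $*$-homomorphism and the quotient norm of a class in $\prod\mathcal M_{k_m}/\bigoplus\mathcal M_{k_m}$ is the $\limsup$ of the norms of its entries, isometry of $\phi$ gives, for every noncommutative polynomial $P$,
$$
\limsup_{m\to\infty}\ \big\|P(A_1^{(m)},\ldots,A_n^{(m)})\big\|\ =\ \big\|P(x_1,\ldots,x_n)\big\|.
$$

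The only delicate point is that this is a $\limsup$, not an honest limit: for finitely many polynomials $P_1,\ldots,P_r$ one cannot expect a single index $m$ at which all of the conditions $\big|\,\|P_j(A^{(m)})\|-\|P_j(x)\|\,\big|\le\epsilon$ hold simultaneously, since the peaks of the sequences $m\mapsto\|P_j(A^{(m)})\|$ need not coincide for different $j$. To bypass this I would use a block-diagonal (direct sum) construction. Given $R$, $\epsilon>0$ and $P_1,\ldots,P_r$: from $\limsup_m\|P_j(A^{(m)})\|\le\|P_j(x)\|$ there is a cofinite set $C$ of $m$ with $\|P_j(A^{(m)})\|<\|P_j(x)\|+\epsilon$ for all $j\le r$ simultaneously; and from the equality above, each $G_j=\{m:\|P_j(A^{(m)})\|>\|P_j(x)\|-\epsilon\}$ is infinite. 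Picking $m_1,\ldots,m_L$ in $C$ (with $L\ge r$, the first $r$ chosen so that $m_l\in G_l$) and forming $\widetilde A_i=A_i^{(m_1)}\oplus\cdots\oplus A_i^{(m_L)}\in\mathcal M_K(\mathbb C)$ with $K=k_{m_1}+\cdots+k_{m_L}$, each $\widetilde A_i$ is self-adjoint with $\|\widetilde A_i\|\le R$, and since a polynomial evaluated at block-diagonal matrices is block diagonal with operator norm the maximum of the block norms, $\|P_j(\widetilde A_1,\ldots,\widetilde A_n)\|=\max_{1\le l\le L}\|P_j(A^{(m_l)})\|$, which lies strictly between $\|P_j(x)\|-\epsilon$ (from the summand $l=j$) and $\|P_j(x)\|+\epsilon$ (from $m_l\in C$). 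Hence $(\widetilde A_1,\ldots,\widetilde A_n)\in\Gamma^{(top)}_R(x_1,\ldots,x_n;K,\epsilon,P_1,\ldots,P_r)$, so this microstate space is nonempty.

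Finally, since $L$ and the $m_l$ may be taken as large as desired, the dimensions $K$ produced this way are unbounded, so one extracts a strictly increasing sequence $k_1<k_2<\cdots$ of such $K$'s with $\Gamma^{(top)}_R(x_1,\ldots,x_n;k_s,\epsilon,P_1,\ldots,P_r)\neq\varnothing$ for every $s$; as $R,\epsilon$ and the finite list $P_1,\ldots,P_r$ were arbitrary, $\mathcal A$ has the approximation property of Definition 4.2. I expect the main obstacle to be precisely the $\limsup$-versus-limit gap handled by the direct-sum trick; the remaining ingredients (separability of $\mathcal A$, the MF embedding, lifting and truncating generators, and the block-diagonal norm computation) are routine once the Blackadar–Kirchberg characterization recorded in Theorem 5.2 is available.
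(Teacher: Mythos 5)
Your argument is correct. It differs from the paper only in how much is made explicit: the paper proves this lemma by simply invoking the equivalence $(1)\Leftrightarrow(2)$ of Theorem 5.2 in the appendix, whose proof is in turn delegated to Theorem 3.2.2 of Blackadar--Kirchberg and to Definition 5.3 and Lemma 5.6 of \cite{HaSh2}, whereas you start from the same characterization (a unital isometric $*$-homomorphism of $\mathcal A$ into $\prod_m \mathcal M_{k_m}(\Bbb C)/\sum_m \mathcal M_{k_m}(\Bbb C)$, which is item (1) of Theorem 5.2) and then give a self-contained derivation of the approximation property. The substantive content you add is exactly the point the citations hide: the quotient norm only gives $\limsup_m\|P(A_1^{(m)},\ldots,A_n^{(m)})\|=\|P(x_1,\ldots,x_n)\|$, and your block-diagonal trick --- choosing indices in a cofinite set where all $r$ polynomial norms are at most $\|P_j(x)\|+\epsilon$, one index per $j$ where $\|P_j\|$ is within $\epsilon$ below, and using that a polynomial of block-diagonal self-adjoint matrices has norm equal to the maximum of the block norms (valid also for the constant term since the evaluation is unital) --- converts this into simultaneous two-sided estimates at a single matrix size, with sizes $K\ge L$ unbounded so that the sequence $k_1<k_2<\cdots$ required by Definition 4.2 exists. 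So the two routes rest on the same equivalence; the paper's is immediate given its appendix, while yours is more elementary and self-contained, and correctly isolates the $\limsup$-versus-limit gap as the only nontrivial step (including the minor but necessary care in lifting to self-adjoint representatives and truncating only finitely many terms so the quotient class, and hence the $\limsup$, is unchanged).
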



Now we are ready to prove our main result in this section.
\begin{theorem}
  Suppose that $\{\mathcal A_i\}_{i=1}^m$ $(m\ge 2)$  is a family of  unital C$^*$-algebras.
  Suppose that $\{x_j^{(i)}\}_{j=1}^{n_i}$ is
   a family of self-adjoint
  generators of $\mathcal A_i$ for $i=1,2,\ldots,m$. Suppose 
   $\{x_j^{(i)}\}_{j=1}^{n_i}$
 is a
 stable  family in the sense of Definition 5.2 for  $1\le i\le
 m$.
  Let the unital C$^*$-algebra $\mathcal
  D$ be the full free product of  $\{\mathcal A_i\}_{i=1}^m$
   equipped with  the unital embedding $\{ \sigma_i: \ \mathcal A_i\rightarrow \mathcal D\}_{i=1}^m$. Then
$$
\delta_{top}(\sigma_1(x_1^{(1)}), \ldots,
\sigma_1(x_{n_1}^{(1)}),\ldots, \sigma_m(x_1^{(m)}),\ldots,
\sigma_m(x_{n_m}^{(m)}))= \sum_{i=1}^m
\delta_{top}(x_1^{(i)},\ldots, x_{n_i}^{(i)}).
$$
If we identify each $x_j^{(i)}$ in $\mathcal A_i$ with its image
$\sigma_i(x_j^{(i)})$ in $\mathcal D$ when no confusion arises, then
$$
\delta_{top}( x_1^{(1)} , \ldots,  x_{n_1}^{(1)} ,\ldots,
 x_1^{(m)} ,\ldots,  x_{n_m}^{(m)} )= \sum_{i=1}^m
\delta_{top}(x_1^{(i)},\ldots, x_{n_i}^{(i)}).
$$
\end{theorem}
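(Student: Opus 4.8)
The plan is to prove the two inequalities separately. The upper bound
$$\delta_{top}(\sigma_1(x_1^{(1)}),\ldots,\sigma_m(x_{n_m}^{(m)}))\le\sum_{i=1}^m\delta_{top}(x_1^{(i)},\ldots,x_{n_i}^{(i)})$$
is exactly Lemma 5.1, so all the work lies in the reverse inequality; and the final displayed identity is merely the first one rewritten after identifying each $x_j^{(i)}$ with $\sigma_i(x_j^{(i)})$, so it needs no separate argument. For the lower bound, fix real numbers $\alpha_i<\delta_{top}(x_1^{(i)},\ldots,x_{n_i}^{(i)})$ for $1\le i\le m$; it suffices to show $\delta_{top}(\sigma_1(x_1^{(1)}),\ldots,\sigma_m(x_{n_m}^{(m)}))\ge\sum_i\alpha_i$ and then let $\alpha_i$ increase to $\delta_{top}(x_1^{(i)},\ldots,x_{n_i}^{(i)})$.

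The point of the stability hypothesis is to make the $m$ microstate spaces large at a common sequence of matrix sizes. By Definition 5.2, each $i$ supplies $R_i,C_i>0$, $\omega_{0,i}>0$ and $r_{0,i},k_{0,i}\in\Bbb N$ with $\nu_\infty(\Gamma_{R_i}^{(top)}(x_1^{(i)},\ldots,x_{n_i}^{(i)};qk_{0,i},\tfrac1r,Q_1^{(i)},\ldots,Q_r^{(i)}),\omega)\ge C_i^{(qk_{0,i})^2}(\tfrac1\omega)^{\alpha_i(qk_{0,i})^2}$ for all $q\in\Bbb N$, $r>r_{0,i}$, $0<\omega<\omega_{0,i}$, where the $Q^{(i)}_r$ enumerate all rational polynomials in $X_1^{(i)},\ldots,X_{n_i}^{(i)}$. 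Set $R=\max_iR_i$, let $k_0$ be a common multiple of $k_{0,1},\ldots,k_{0,m}$, and $\omega_0=\min_i\omega_{0,i}$. Then along the common sizes $k=qk_0$ all $m$ packing bounds hold simultaneously, with $(qk_{0,i})^2$ replaced by $k^2$. (In particular each $\mathcal A_i$ has the approximation property of Definition 4.2, a fact recorded for later use.)

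Next comes the gluing. Given a target accuracy $(\epsilon_0,r_0)$ for semi-microstates of $\mathcal D$, Lemma 5.2 produces $r_1\in\Bbb N$ and $\epsilon_1>0$ so that, for every $k$, juxtaposition carries $\prod_{i=1}^m\Gamma_R^{(top1/2)}(x_1^{(i)},\ldots,x_{n_i}^{(i)};k,\epsilon_1,P_1^{(i)},\ldots,P_{r_1}^{(i)})$ into $\Gamma_R^{(top1/2)}(\sigma_1(x_1^{(1)}),\ldots,\sigma_m(x_{n_m}^{(m)});k,\epsilon_0,P_1,\ldots,P_{r_0})$. Choosing $r$ large enough that $\tfrac1r\le\epsilon_1$ and that $P_1^{(i)},\ldots,P_{r_1}^{(i)}$ occur among $Q_1^{(i)},\ldots,Q_r^{(i)}$, and using $\Gamma^{(top)}\subseteq\Gamma^{(top1/2)}$, we get for $k=qk_0$ that the product of the genuine microstate spaces $\Gamma_R^{(top)}(x_1^{(i)},\ldots,x_{n_i}^{(i)};k,\tfrac1r,Q_1^{(i)},\ldots,Q_r^{(i)})$ sits inside $\Gamma_R^{(top1/2)}(\sigma_1(x_1^{(1)}),\ldots,\sigma_m(x_{n_m}^{(m)});k,\epsilon_0,P_1,\ldots,P_{r_0})$. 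Since covering numbers are, up to a fixed change of radius, multiplicative over a max-metric product (pass through packing numbers to see $\nu_\infty$ of a product is bounded below by the product of the $\nu_\infty$ of the factors at twice the radius), this yields $\nu_\infty(\Gamma_R^{(top1/2)}(\sigma_1(x_1^{(1)}),\ldots,\sigma_m(x_{n_m}^{(m)});k,\epsilon_0,\ldots),\omega)\ge(\prod_iC_i)^{k^2}(\tfrac1{4\omega})^{(\sum_i\alpha_i)k^2}$ for $k=qk_0$, $0<\omega<\omega_0/4$. Restricting $\limsup_{k}$ to $k=qk_0$ (which only decreases it), then taking $\inf_{\epsilon_0,r_0}$ (the bound being independent of these), then $\sup_R$, then $\limsup_{\omega\to0^+}$, gives $\delta_{top}^{(1/2)}(\sigma_1(x_1^{(1)}),\ldots,\sigma_m(x_{n_m}^{(m)}))\ge\sum_i\alpha_i$, where $\delta_{top}^{(1/2)}$ is the topological free entropy dimension computed with the semi-microstate spaces $\Gamma^{(top1/2)}$.

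Finally I would upgrade from $\delta_{top}^{(1/2)}$ to $\delta_{top}$. Since each $\mathcal A_i$ has the approximation property it is an MF algebra, so by Theorem 5.4 of the Appendix the full free product $\mathcal D$ is MF and hence has the approximation property; for algebras with the approximation property one has $\delta_{top}=\delta_{top}^{(1/2)}$ (this is Proposition 5.1 in \cite{HaSh2}; compare the remark in Section 6 of \cite{Voi}). Therefore $\delta_{top}(\sigma_1(x_1^{(1)}),\ldots,\sigma_m(x_{n_m}^{(m)}))\ge\sum_i\alpha_i$, and letting $\alpha_i$ increase to $\delta_{top}(x_1^{(i)},\ldots,x_{n_i}^{(i)})$ and combining with Lemma 5.1 yields the asserted equality. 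I expect the main obstacle to be this last ingredient together with the parameter bookkeeping tying the stability data (which come with their own arithmetic progression $k_{0,i}$ of matrix sizes and their own list of test polynomials) to the data produced by Lemma 5.2; the step $\delta_{top}=\delta_{top}^{(1/2)}$ is precisely where genuine norm-microstates of the full free product — not merely semi-microstates — are required, and it is there that the MF-ness of $\mathcal D$ from the Appendix is essential.
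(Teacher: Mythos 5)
Your argument runs parallel to the paper's up to the point where semi-microstates must be upgraded to genuine norm-microstates of $\mathcal D$, and that is exactly where it has a genuine gap. Everything before that step --- Lemma 5.1 for the upper bound, the common-multiple bookkeeping for the stability data, Lemma 5.2 together with the inclusion $\Gamma^{(top)}\subseteq\Gamma^{(top1/2)}$ to place the product of the factors' microstate spaces inside $\Gamma_R^{(top1/2)}$ of the free product, and the multiplicativity of packing numbers --- is sound. But your final step rests on the assertion that $\delta_{top}=\delta_{top}^{(1/2)}$ for algebras with the approximation property, attributed to Proposition 5.1 of \cite{HaSh2} and the Section 6 remark of \cite{Voi}. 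That is not what those references say: as they are used in Lemma 3.1 of this paper, they assert that in the definition of $\delta_{top}$ one may replace operator-norm $\omega$-balls by $\Vert\cdot\Vert_2$-balls; they say nothing about replacing the norm-microstate spaces $\Gamma^{(top)}$ (two-sided norm conditions) by the semi-microstate spaces $\Gamma^{(top1/2)}$ (one-sided conditions). Since $\Gamma^{(top)}\subseteq\Gamma^{(top1/2)}$, only the useless inequality $\delta_{top}\le\delta_{top}^{(1/2)}$ is automatic; the inequality you actually need, $\delta_{top}\ge\delta_{top}^{(1/2)}$, is precisely the nontrivial point, because juxtaposed microstates of the $\mathcal A_i$ need not witness the norms of mixed polynomials in $\mathcal D$ from below.

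The gap is fillable, and the paper's own proof shows how: use MF-ness of $\mathcal D$ (Theorem 5.4) through Lemma 5.3 to produce one genuine microstate of a fixed size $k_0$ in $\Gamma_R^{(top)}(\sigma_1(x_1^{(1)}),\ldots,\sigma_m(x_{n_m}^{(m)});k_0,1/r_0,P_1,\ldots,P_{r_0})$, and direct-sum it onto each of the $\omega$-separated semi-microstate tuples of size $qk_1$ coming from the stability packing bounds. Because $\Vert P(A\oplus B)\Vert=\max\{\Vert P(A)\Vert,\Vert P(B)\Vert\}$, the $A$-block restores the lower norm estimates while the $B$-block (a semi-microstate of $\mathcal D$ by Lemma 5.2) cannot overshoot, so $A\oplus B$ lies in the genuine space $\Gamma_R^{(top)}(\ldots;k_0+qk_1,1/r_0,P_1,\ldots,P_{r_0})$; distinct $B$'s that are $\omega$-separated remain $\omega$-separated after padding, and since $k_0$ is fixed the exponent $\sum_i\alpha_i$ survives the passage $(qk_1)^2/(k_0+qk_1)^2\to 1$. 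If you prefer to keep your intermediate quantity $\delta_{top}^{(1/2)}$, this same padding argument is what you must supply to prove $\delta_{top}\ge\delta_{top}^{(1/2)}$ for finitely generated MF algebras; as written, that equality is asserted on the strength of a citation that does not contain it.
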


\begin{proof}
By  Lemma 5.1, we need only to prove that $$
\delta_{top}(\sigma_1(x_1^{(1)}), \ldots,
\sigma_1(x_{n_1}^{(1)}),\ldots, \sigma_m(x_1^{(m)}),\ldots,
\sigma_m(x_{n_m}^{(m)}))\ge \sum_{i=1}^m
\delta_{top}(x_1^{(i)},\ldots, x_{n_i}^{(i)}).
$$

Let $R>\max_{1\le i\le n, 1\le j\le n_i}  \|x_j^{(i)}\|  $ be a
positive
  number.
Since for all $1\le i\le m$,   $\{x_j^{(i)}\}_{j=1}^{n_i}$
 is a
 stable  family in the sense of Definition 5.2, each  $\mathcal A_i$ is a     unital MF C$^*$
algebras. By Theorem 5.4 in the appendix,
  the full free product $\mathcal D$ is   also an  unital MF C$^*$-algebra. By Lemma 5.3, it follows that for  any
   $r_0\in\Bbb N$  , there is some $k_0\in \Bbb N$ so
   that
   $$
\Gamma_R^{(top)}(\sigma_1(x_1^{(1)}), \ldots,
\sigma_1(x_{n_1}^{(1)}),\ldots, \sigma_m(x_1^{(m)}),\ldots,
\sigma_m(x_{n_m}^{(m)}); k_0, 1/r_0, P_1,\ldots,P_{r_0})\ne
\emptyset.
   $$

Note that for all $1\le i\le m$,   $\{x_j^{(i)}\}_{j=1}^{n_i}$
 is a
 stable  family. By Definition 5.2, for all $$
\alpha_i<\delta_{top}(x_1^{(i)},\ldots, x_{n_i}^{(i)}), \qquad
i=1,2,\ldots, m,
$$
there are positive numbers $C>0$ and $\omega_0>0$, $r_1\ge r_0$,
$k_1\ge 1$ so that
 \begin{align}
&\nu_\infty(\Gamma_{R}^{(top)}(x_1^{(i)},\ldots,
x_{n_i}^{(i)};q\cdot k_1,\frac 1 r, P_1^{(i)},\ldots, P_r^{(i)}),
\omega)\ge C^{ (q\cdot k_1)^2}\left( \frac 1
\omega\right)^{\alpha_i \cdot  (q\cdot k_1)^2},
\end{align}
  for $  \ 0<\omega<\omega_0, \ r>r_1, \ q\in \Bbb N,$ and $  1 \le i\le m$.

By Lemma 5.2,  when $r$ is large enough, for all $q\in \Bbb N$  we
have that if
 \begin{align}
 (A_1^{(1)} ,   &\ldots, A_{n_1}^{(1)} ,\ldots, A_1^{(m)} ,\ldots,
A_{n_m}^{(m)} ) \qquad \qquad \qquad \qquad \notag \\ &\in
\Gamma_R^{(top)}(\sigma_1(x_1^{(1)}), \ldots,
\sigma_1(x_{n_1}^{(1)}),\ldots, \sigma_m(x_1^{(m)}),\ldots,
\sigma_m(x_{n_m}^{(m)}); k_0, 1/r_0, P_1,\ldots,P_{r_0})
\end{align}
and
  \begin{align}
(B_1^{(i)}, \ldots, B_{n_i}^{(i)}) \in \Gamma_R^{(top1/2)}(
x_1^{(i)},\ldots,x_{n_i}^{(i)}; qk_1, \epsilon,P_1^{(i)},\ldots,
     P_r^{(i)}), \  \text { for } \ 1\le i\le m ,\end{align}
then
 \begin{align}
&(A_1^{(1)}\oplus  B_1^{(1)}  ,   \ldots, A_{n_1}^{(1)} \oplus
B_{n_1}^{(1)} ,\ldots, A_1^{(m)}\oplus B_1^{(m)} ,\ldots,
A_{n_m}^{(m)}\oplus B_{n_m}^{(m)})\notag  \\
& \in \Gamma_R^{(top)}(\sigma_1(x_1^{(1)}),  \ldots,
\sigma_1(x_{n_1}^{(1)}),\ldots, \sigma_m(x_1^{(m)}),\ldots,
\sigma_m(x_{n_m}^{(m)}); k_0+q  k_1 , 1/r_0, P_1,\ldots,P_{r_0}).
\end{align}  On the other hand, by (5.2), for $  \
0<\omega<\omega_0/2$, $r $ large enough,
 \begin{align}
\nu_\infty(  \Gamma_R^{(top1/2)}&(x_1^{(i)},\ldots, x_{n_i}^{(i)};q
k_1, 1/r,P_1^{(i)},\ldots, P_r^{(i)}  ), 2\omega)\notag \\ &\ge
\nu_\infty(\Gamma_{R}^{(top)}(x_1^{(i)},\ldots, x_{n_i}^{(i)};q
k_1,\frac 1 r, P_1^{(i)},\ldots, P_r^{(i)}), 2\omega)\notag \\& \ge
C^{ (q\cdot k_1)^2}\left( \frac 1 {2\omega}\right)^{\alpha_i \cdot
(q\cdot k_1)^2}, \quad \forall \ q\in \Bbb N.\end{align} Hence by
the relationship between packing number and covering number, we know
there is a family of elements
 \begin{align}
\{(B_1^{(\lambda_i)}, \ldots, B_{n_i}^{(\lambda_i)})
\}_{\lambda_i\in \Lambda_i}\subset \Gamma_R^{(top1/2)}(
x_1^{(i)},\ldots,x_{n_i}^{(i)}; qk_1, \epsilon,P_1^{(i)},\ldots,
     P_r^{(i)}), \  \text { for } \ 1\le i\le m ,\end{align}
where $\Lambda_i$ is an index set satisfying
\begin{align}
|\Lambda_i|\ge \nu_\infty(  \Gamma_R^{(top1/2)}&(x_1^{(i)},\ldots,
x_{n_i}^{(i)};q k_1, 1/r,P_1^{(i)},\ldots, P_r^{(i)}  ), 2\omega)
\ge C^{ (q\cdot k_1)^2}\left( \frac 1 {2\omega}\right)^{\alpha_i
\cdot (q\cdot k_1)^2};
\end{align}
and
\begin{align}
\|(B_1^{(\lambda_i)}, \ldots, B_{n_i}^{(\tilde
\lambda_i)})-(B_1^{(\tilde \lambda_i)}, \ldots,
B_{n_i}^{(\tilde\lambda_i)})\|\ge \omega, \qquad \forall\
\lambda_i\ne \tilde \lambda_i \in \Lambda_i.
\end{align}
 Thus,
combining (5.3), (5.4), (5.5), (5.7), (5.8) and (5.9) we have
$$\begin{aligned}
\nu_\infty(  \Gamma_R^{(top)}(\sigma_1(x_1^{(1)}),  \ldots,
 &\sigma_1(x_{n_1}^{(1)}),\ldots,  \sigma_m(x_1^{(m)}),\ldots,
\sigma_m(x_{n_m}^{(m)}); k_0+q  k_1 , 1/r_0, P_1,\ldots,P_{r_0}),
\omega)\\ &  \ge \prod_{i=1}^m |\Lambda_i|\ge \prod_{i=1}^m\left (
C^{ (q\cdot k_1)^2}\left( \frac 1 {2\omega}\right)^{\alpha_i \cdot
(q\cdot k_1)^2}\right),\qquad \forall \ q\in \Bbb N.\end{aligned}
$$
 This induces that
$$
\delta_{top}(\sigma_1(x_1^{(1)}),  \ldots,
\sigma_1(x_{n_1}^{(1)}),\ldots, \sigma_m(x_1^{(m)}),\ldots,
\sigma_m(x_{n_m}^{(m)}))\ge \sum_{i=1}^m \alpha_i,
$$whence $$
\delta_{top}(\sigma_1(x_1^{(1)}),  \ldots,
\sigma_1(x_{n_1}^{(1)}),\ldots, \sigma_m(x_1^{(m)}),\ldots,
\sigma_m(x_{n_m}^{(m)}))\ge\sum_{i=1}^m
\delta_{top}(x_1^{(i)},\ldots, x_{n_i}^{(i)}) .
$$ By Lemma 5.1, we get
$$
\delta_{top}(\sigma_1(x_1^{(1)}),  \ldots,
\sigma_1(x_{n_1}^{(1)}),\ldots, \sigma_m(x_1^{(m)}),\ldots,
\sigma_m(x_{n_m}^{(m)}))=\sum_{i=1}^m \delta_{top}(x_1^{(i)},\ldots,
x_{n_i}^{(i)}).
$$
Or
$$
\delta_{top}( x_1^{(1)} , \ldots,  x_{n_1}^{(1)} ,\ldots,
 x_1^{(m)} ,\ldots,  x_{n_m}^{(m)} )= \sum_{i=1}^m
\delta_{top}(x_1^{(i)},\ldots, x_{n_i}^{(i)}),
$$ when no confusion arises.
\end{proof}

As a corollary, we have the following result.
\begin{corollary}
Suppose that $\mathcal A_i$ ($i=1,2,\ldots, m$) is a unital C$^*$
algebra generated by a self-adjoint element $x_i$  in $\mathcal
A_i$. Let $\mathcal D$ be the full free product of $\mathcal A_1,
\ldots \mathcal A_n$ equipped with    unital embedding from each
$\mathcal A_i$ into $\mathcal D$. Identify the element $x_i$ in
$\mathcal A_i$ with its image in $\mathcal D$. Then
$$
\delta_{top}(x_1,\ldots, x_n) = \sum_{i=1}^n
\delta_{top}(x_i)=n-\sum_{i=1}^n \frac 1 {n_i},
$$ where $n_i$ is the number of elements in the spectrum of
$x_i$ in $\mathcal A_i$. (We use the notation $1/\infty=0$)

\end{corollary}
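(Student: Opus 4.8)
The plan is to obtain the corollary by assembling two facts already available in the paper: the additivity of $\delta_{top}$ in full free products (Theorem 5.1) and the computation of the topological free entropy dimension of a single self-adjoint element carried out in \cite{HaSh2}.

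First I would check that Theorem 5.1 applies here. Each $\mathcal A_i$ is, by hypothesis, generated by the single self-adjoint element $x_i$, so $\{x_i\}$ is a one-element family of self-adjoint generators of $\mathcal A_i$. By Example 5.2, a single self-adjoint element in a unital C$^*$-algebra is a stable family in the sense of Definition 5.2; hence the stability hypothesis of Theorem 5.1 is met for every $i$ (and, as noted inside the proof of Theorem 5.1, stability automatically forces each $\mathcal A_i$, and therefore the full free product $\mathcal D$, to be an MF algebra, so no extra assumption is needed). Applying Theorem 5.1 to $\mathcal D$ and identifying each $x_i$ with its image $\sigma_i(x_i)$ in $\mathcal D$, we get
$$
\delta_{top}(x_1,\ldots,x_n)=\sum_{i=1}^n\delta_{top}(x_i).
$$

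It then remains to evaluate $\delta_{top}(x_i)$ for each $i$. Since $x_i$ generates $\mathcal A_i$, the C$^*$-algebra $\mathcal A_i$ is $*$-isomorphic to $C(\sigma(x_i))$, and $n_i$ equals the cardinality of $\sigma(x_i)$. By the computation in \cite{HaSh2} of the topological free entropy dimension of a self-adjoint element, one has $\delta_{top}(x_i)=1-1/n_i$ when $\sigma(x_i)$ is finite and $\delta_{top}(x_i)=1$ when $\sigma(x_i)$ is infinite; with the convention $1/\infty=0$ both cases read uniformly as $\delta_{top}(x_i)=1-1/n_i$. Substituting into the additivity identity gives
$$
\delta_{top}(x_1,\ldots,x_n)=\sum_{i=1}^n\left(1-\frac{1}{n_i}\right)=n-\sum_{i=1}^n\frac{1}{n_i},
$$
which is the asserted formula.

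There is essentially no genuine obstacle here: the content of the corollary is packaging, and the only point that deserves care is confirming that a singleton family lies within the scope of Theorem 5.1, i.e. that it is stable, which is exactly Example 5.2 (itself resting on the single-variable microstate-counting estimate from \cite{HaSh3}). An alternative, self-contained route would be to reprove the lower bound directly, running the argument in the proof of Theorem 5.1 with the explicit norm-microstates spaces of the $x_i$ — whose $\|\cdot\|$-covering numbers inside $(\mathcal M_k^{s.a}(\mathbb C))$ are elementary, being controlled by the spectral data of (approximately) diagonal matrices with $n_i$ distinct eigenvalues — thereby recovering in passing the one-variable value $1-1/n_i$; but invoking Theorem 5.1 together with \cite{HaSh2} is cleaner, and this is the route I would take.
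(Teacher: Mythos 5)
Your argument is exactly the paper's: the corollary is proved there by citing the stability of a single self-adjoint element (the example in Section 5, resting on \cite{HaSh3}), the additivity Theorem 5.1, and the one-variable computation $\delta_{top}(x_i)=1-\tfrac{1}{n_i}$ from \cite{HaSh2}. Your write-up is a correct and slightly more detailed version of that same packaging, so there is nothing to add beyond noting that the stability example you invoke is the one labeled Example 5.1 in the paper.
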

\begin{proof}
It follows from   Example 5.1, Theorem 5.1 and the results in
\cite{HaSh2}.
\end{proof}
\begin{corollary}
Suppose that $\mathcal A_i$ is a finite dimensional  C$^*$-algebra
generated by a family of self-adjoint element $\{x_{j}^{(i)}\}_{1\le
j\le n_i}$ for $1\le i\le n$. Let $\mathcal D$ be the full free
product of $\mathcal A_1, \ldots \mathcal A_n$ equipped with unital
embedding from each $\mathcal A_i$ into $\mathcal D$. Identify the
element $\{x_{j}^{(i)}\}$ in $\mathcal A_i$ with its image in
$\mathcal D$. Then
$$
\delta_{top}(\{x_{j}^{(i)}\}_{1\le j\le n_i, 1\le i\le n}) =
\sum_{i=1}^n \delta_{top}(\{x_{j}^{(i)}\}_{1\le j\le n_i})=
n-\sum_{i=1}^n \frac 1{dim_{\Bbb C} \mathcal A_i},
$$ where $dim_{\Bbb C}
\mathcal A_i$ is the complex dimension of $\mathcal A_i$.
\begin{proof}
It follows from   Example 5.1 , Theorem 5.1 and the results in
\cite{HaSh3}.
\end{proof}
\end{corollary}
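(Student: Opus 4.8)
The plan is to assemble the statement from three facts already available in the paper: the stability of finite-dimensional generating tuples (Example 5.1), the additivity of $\delta_{top}$ across full free products of stable families (Theorem 5.1), and the value of $\delta_{top}$ on a finite family of self-adjoint generators of a finite-dimensional C$^*$-algebra computed in \cite{HaSh3}. No new microstate estimate is required; the whole content is in invoking these in the right order.

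First I would record that, for each $1\le i\le n$, the algebra $\mathcal A_i$ is finite-dimensional --- in particular a unital MF algebra --- and that the prescribed family $\{x_j^{(i)}\}_{1\le j\le n_i}$ of self-adjoint generators is stable in the sense of Definition 5.2; this is precisely Example 5.1. Hence $\mathcal A_1,\ldots,\mathcal A_n$ satisfy the hypotheses of Theorem 5.1, and applying that theorem to the full free product $\mathcal D$ with its canonical unital embeddings $\sigma_i$ gives
$$
\delta_{top}\bigl(\{x_j^{(i)}\}_{1\le j\le n_i,\,1\le i\le n}\bigr)=\sum_{i=1}^n\delta_{top}\bigl(\{x_j^{(i)}\}_{1\le j\le n_i}\bigr),
$$
where on the left each $x_j^{(i)}$ is identified with its image $\sigma_i(x_j^{(i)})$ in $\mathcal D$, as Theorem 5.1 permits. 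This is the first equality.

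For the second equality I would quote the computation in \cite{HaSh3} of the topological free entropy dimension of a finite family of self-adjoint generators of a finite-dimensional C$^*$-algebra, namely $\delta_{top}(\{x_j^{(i)}\}_{1\le j\le n_i}) = 1-1/\dim_{\Bbb C}\mathcal A_i$ --- which reduces to the $1-1/n_i$ of Corollary 5.1 when $\mathcal A_i$ is abelian with $n_i$-point spectrum, and to $1-1/d^2$ when $\mathcal A_i=\mathcal M_d(\Bbb C)$. Summing over $i=1,\ldots,n$ yields $\sum_{i=1}^n\delta_{top}(\{x_j^{(i)}\}_{1\le j\le n_i})=n-\sum_{i=1}^n 1/\dim_{\Bbb C}\mathcal A_i$, completing the argument. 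Since every ingredient is already proved, there is no real obstacle at the level of this corollary; the only points to check are formal --- that \emph{stable} entails \emph{MF} so that Theorem 5.1 applies verbatim (automatic here, as finite-dimensional algebras are trivially MF), and that the normalization in the \cite{HaSh3} formula is expressed through $\dim_{\Bbb C}$ rather than through the individual matrix-block sizes. A genuinely self-contained proof would instead have to reproduce the lower-bound microstate counting of \cite{HaSh3}, and avoiding exactly that is the purpose of phrasing the result as a corollary.
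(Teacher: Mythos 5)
Your proposal is correct and follows exactly the route the paper intends: Example 5.1 gives stability of the finite-dimensional generating tuples, Theorem 5.1 gives additivity of $\delta_{top}$ over the full free product, and the formula $\delta_{top}(\{x_j^{(i)}\}_j)=1-1/\dim_{\Bbb C}\mathcal A_i$ from \cite{HaSh3} supplies the numerical value --- which is precisely the paper's one-line proof. Nothing is missing, and your remarks on the MF/stability hypotheses and the normalization via $\dim_{\Bbb C}$ are the right sanity checks.
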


\newpage

\centerline{\Large\bf{Appendix: Full Free Product of Unital MF
C$^*$-algebras  }}

\vspace {0.5cm}

\centerline{\Large\bf{ is \ \  MF Algebra}}

\vspace{1cm}

The concept of MF algebras was introduced by Blackadar and Kirchberg
in \cite{BK}. This     class of C$^*$-algebras is of interest for
many reasons. For example, it plays an important role in the
classification of C$^*$-algebras and it is connected to the question
whether the extension semigroup, in the sense of Brown, Douglas and
Fillmore, of a unital C$^*$-algebra is a group (see the striking
result of Haagerup and Thorb{\o}rnsen on $Ext(C_r^*(F_2)$). Thanks
to Voiculescu's result in \cite{VoiQusi}, we know that every
quasidigonal C$^*$-algebra is an MF algebra. Many   properties of MF
algebras have been discussed in \cite{BK}.  For example, it was
shown there that the inductive limit of MF algebras is an MF algebra
and every subalgebra of an MF algebra is an MF algebra. In this
appendix, we will prove that {\em unital full free product of two
unital separable MF algebras is, again, an MF algebra.}

Let us fix  notation first. We always assume that $\mathcal H$ is a
separable complex Hilbert space and $B(\mathcal H)$ is
 the set of all bounded operators on $\mathcal H$. Suppose $ \{x, x_k\}_{k=1}^\infty$ is a
  family of elements in $B(\mathcal H)$.
 We say $x_k\rightarrow x$ in $*$-SOT ($*$-strong operator topology) if and only if $x_k\rightarrow x$ in SOT and
 $x_k^*\rightarrow x^*$ in  SOT. Suppose $\{x_1,\ldots, x_n\} $ and
 $\{x_1^{(k)},\ldots, x_n^{(k)}\}_{k=1}^\infty$ are families of
 elements in $B(\mathcal H)$. We say
 $$
\langle x_1^{(k)},\ldots, x_n^{(k)} \rangle \rightarrow  \langle
x_1,\ldots, x_n \rangle, \ in \ *-SOT, \ \ as \ k \rightarrow \infty
 $$ if and only if
 $$
x_i^{(k)}   \rightarrow    x_i \ in \ *-SOT, \ \ as \ k \rightarrow
\infty, \ \ \forall \ 1\le i\le n.
 $$

Suppose $\{\mathcal A_k\}_{k=1}^\infty$ is a family of unital
C$^*$-algebras. Let $\prod \mathcal A_k$ be C$^*$-direct product of
the $\mathcal A_k$, i.e. the set of bounded sequences $(
x_k)_{k=1}^\infty$, with $x_k\in \mathcal A_k$, with pointwise
operations and sup norm; and let $\sum \mathcal A_k$ be the
C$^*$-direct sum, the set of sequences converging to zero in norm.
Then $\prod \mathcal A_k$ is a C$^*$-algebra and $\sum \mathcal A_k$
is a closed two-sided ideal; let $\pi$ be the quotient map from
$\prod \mathcal A_k$ to $\prod \mathcal A_k/\sum \mathcal A_k$. Then
$\prod \mathcal A_k/\sum \mathcal A_k$ is a unital C$^*$-algebra. If
we denote $\pi(( x_k)_{k=1}^\infty)$ by
 $[(x_k)_k]$ for any $( x_k)_{k=1}^\infty$ in $\prod \mathcal A_k$, then
 $$
 \|[(x_k)_k]\|= \limsup_{k\rightarrow \infty} \|x_k\|.
 $$

Suppose $\mathcal A$ is a  separable  unital C$^*$-algebra on a
Hilbert space $\mathcal H$. Let $\mathcal H^\infty=\oplus_{\Bbb N}
\mathcal H$, and for any $x\in \mathcal A$, let $x^\infty$ be the
element $ \oplus_{\Bbb N} x =(x,x,x,\ldots )$ in $\prod \mathcal
A^{(k)}\subset B(\mathcal H^\infty)$, where $\mathcal A^{(k)}$ is
the $k$-th   copy of $\mathcal A$.

Suppose  $\mathcal A$ is a separable unital C*-algebra and $\pi_i :
\mathcal A \rightarrow B(\mathcal H_i)$ are unital
$*$-representations for $i = 1, 2$. If there is a  unitary $U:
\mathcal H_1\rightarrow \mathcal H_2$, $n=1,2,\cdots$, satisfying
$$
 U^*  \pi_2(x)U=\pi_1(x) ,\ \ \forall \ x \in \mathcal A,
$$ then we say that $\pi_1$ and $\pi_2$ are   unitarily
equivalent, which is denoted by $\pi_1  \thickapprox_u \pi_2$.
Furthermore, if $\mathcal A$ is generated by a family of
self-adjoint elements $\{x_1,\ldots, x_n\}$, then $\pi_1
\thickapprox_u \pi_2$ will also be denoted by
$$
\langle \pi_1(x_1),\ldots, \pi_1(x_n)\rangle \thickapprox_u \langle
\pi_2(x_1),\ldots, \pi_2(x_n)\rangle.
$$

Suppose $\mathcal A$ is a separable unital C*-algebra and $\pi_i :
\mathcal A \rightarrow B(\mathcal H_i)$ are unital
$*$-representations for $i = 1, 2$. If there is a sequence of
unitaries $U_n: \mathcal H_1\rightarrow \mathcal H_2$,
$n=1,2,\cdots$, satisfying
$$
\|U^*_n \pi_2(x)U_n -\pi_1(x)\|\rightarrow 0, \ \ as \ \ n\rightarrow \infty,\ \ \forall \ x \in \mathcal A,
$$then we say that $\pi_1$ and $\pi_2$ are approximately unitarily
equivalent, which is denoted by $\pi_1\thicksim_a \pi_2$. Recall the following important result  by Voiculescu:
 \begin{lemma} Let $\mathcal A$ be a separable unital C*-algebra and
 $\pi_i : \mathcal A \rightarrow B(\mathcal H_i)$ be unital faithful
 $*$-representations for $i = 1, 2$ satisfying $\pi_i(\mathcal A)\cap \mathcal K(\mathcal H_i)=0$ for $i=1,2$, where $\mathcal K(\mathcal H_i)$ is the set of compact operators on $\mathcal H_i$. Then
    $\pi_1\thicksim_a \pi_2.$ \end{lemma}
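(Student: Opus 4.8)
This is Voiculescu's non-commutative Weyl--von Neumann theorem, and I would prove it along the classical route, through an absorption principle. A preliminary remark: since $\mathcal A$ is unital, $\mathcal A\neq 0$; if some $\mathcal H_i$ were finite dimensional then $\pi_i(\mathcal A)\subseteq\mathcal K(\mathcal H_i)$, forcing $\pi_i(\mathcal A)\cap\mathcal K(\mathcal H_i)=\pi_i(\mathcal A)\neq 0$, contrary to hypothesis; hence $\mathcal H_1$ and $\mathcal H_2$ are both infinite dimensional separable, hence Hilbert-space isomorphic, so that $\pi_1\thicksim_a\pi_2$ is a meaningful assertion.

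The key is the \emph{absorption} statement: if $\pi:\mathcal A\to B(\mathcal H)$ is a faithful unital representation with $\pi(\mathcal A)\cap\mathcal K(\mathcal H)=0$ and $\sigma$ is any unital representation of $\mathcal A$ on a separable Hilbert space, then $\pi\thicksim_a\pi\oplus\sigma$. Granting this, the lemma is immediate: apply absorption with $(\pi,\sigma)=(\pi_1,\pi_2)$ to get $\pi_1\thicksim_a\pi_1\oplus\pi_2$; note $\pi_1\oplus\pi_2\cong\pi_2\oplus\pi_1$ (a genuine unitary equivalence, flipping summands); apply absorption again with $(\pi,\sigma)=(\pi_2,\pi_1)$ to get $\pi_2\oplus\pi_1\thicksim_a\pi_2$; and use transitivity of $\thicksim_a$ to conclude $\pi_1\thicksim_a\pi_2$.

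To prove absorption I would exhaust $\mathcal A$ by an increasing sequence of finite subsets with dense union, reducing by a diagonal argument to producing, for each finite $\mathcal F\subseteq\mathcal A$ and $\varepsilon>0$, a unitary $U:\mathcal H\to\mathcal H\oplus\mathcal H_\sigma$ with $\|U(\pi\oplus\sigma)(a)U^*-\pi(a)\|<\varepsilon$ for $a\in\mathcal F$. Such a $U$ is assembled from approximate intertwiners, and this is where the hypothesis $\pi(\mathcal A)\cap\mathcal K(\mathcal H)=0$ does all the work: via Glimm's lemma, non-compactness of $\pi(\mathcal A)$ (equivalently of $(\mathrm{id}_{M_n}\otimes\pi)(M_n(\mathbb C)\otimes\mathcal A)$) lets one realize prescribed states, and more generally prescribed finite dimensional pieces of $\sigma$, approximately by almost-orthonormal systems of vectors in $\mathcal H$ that can be pushed to be orthogonal to any prescribed finite dimensional subspace; concretely this yields isometries $V:\mathbb C^n\to\mathcal H$ with $\|V^*\pi(a)V-\rho(a)\|<\varepsilon$ and (once $\mathcal F$ is enlarged to include the pertinent products, so that $\rho(a^* a)=\rho(a)^*\rho(a)$ may be used after expanding $(\pi(a)V-V\rho(a))^*(\pi(a)V-V\rho(a))$) also $\|\pi(a)V-V\rho(a)\|<\varepsilon$. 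Iterating this on successive almost-orthogonal pieces of $\mathcal H$, while using an approximate unit of $\mathcal K(\mathcal H)$ that is quasicentral for $C^*(\pi(\mathcal A)+\mathcal K(\mathcal H))$ to guarantee that the leftover piece still carries a faithful essential copy of $\pi$, produces the desired $U$. The principal obstacle is precisely this construction — the simultaneous, quantitative control afforded by non-compactness (Glimm's lemma) to embed arbitrary finite dimensional data into $\pi$ approximately and almost invariantly — together with the quasicentral-approximate-unit bookkeeping needed so that the iteration neither exhausts $\mathcal H$ nor destroys essentiality.
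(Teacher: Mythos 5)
The paper does not actually prove this statement: it is quoted verbatim as Voiculescu's non-commutative Weyl--von Neumann theorem (``Recall the following important result by Voiculescu''), so there is no internal proof to compare yours against. Your outline is the standard textbook argument, and its architecture is sound: the reduction of the lemma to the absorption principle ($\pi\thicksim_a\pi\oplus\sigma$ whenever $\pi$ is faithful with $\pi(\mathcal A)\cap\mathcal K(\mathcal H)=0$), applied twice together with the flip $\pi_1\oplus\pi_2\thickapprox_u\pi_2\oplus\pi_1$ and transitivity, is complete and correct, and your preliminary observation that both $\mathcal H_i$ must be infinite dimensional is the right normalization. The proof of absorption itself is only sketched, and one phrase needs care: a general unital representation $\sigma$ has no ``finite dimensional pieces'' (no finite-dimensional subrepresentations), so the objects you feed into the Glimm-type lemma must be the u.c.p.\ compressions $a\mapsto Q_k\sigma(a)Q_k$ by finite-rank projections $Q_k\uparrow I$; these are only approximately multiplicative, with defect $Q_k\sigma(a^*)(I-Q_k)\sigma(a)Q_k$, so the intertwiner estimate $\|\pi(a)V-V\Phi_k(a)\|<\varepsilon$ requires choosing $Q_k$ nearly invariant for $\sigma$ on the finite set in question rather than invoking exact multiplicativity of $\rho$ as you do. With that adjustment (and the usual bookkeeping you allude to via quasicentral approximate units, which keeps the leftover corner essential), your route is exactly the classical Arveson--Davidson proof of the result the paper cites; since the lemma is imported rather than proved in the paper, citing Voiculescu's original theorem would also have sufficed.
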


The following lemma will be needed in the proof of Theorem 5.2.

\begin{lemma}
Suppose that $\mathcal H_1$ and $\mathcal H_2$ are two separable
infinite dimensional complex Hilbert spaces. Let
$\{e_n\}_{n=1}^\infty$ be an orthonormal  basis of $\mathcal H_1$
and $\{f_n\}_{n=1}^\infty$ be an orthonormal  basis of $\mathcal
H_2$. (Thus $\{e_n\oplus 0, 0 \oplus f_m\}_{m,n=1}^\infty$ is an
orthonormal  basis of $\mathcal H_1\oplus \mathcal H_2$). Let
$$
T=\begin{pmatrix} A & B\\
C& D \end{pmatrix} \ \in B(\mathcal H_1\oplus \mathcal H_2),
$$ where $A\in B(\mathcal H_1), B \in B(\mathcal H_2, \mathcal H_1), C
 \in B(\mathcal H_1, \mathcal H_2)$ and
$D \in B(\mathcal H_2).$ Assume $\{U_n\}_{n=1}^\infty$ is a
family of unitary operators from $\mathcal H_2$ to $\mathcal
H_1\oplus \mathcal H_2$ so that
$$
   U_n(f_i)= 0\oplus f_i, \ \ \forall \ 1\le i\le n.
$$ Then the following statements are true:
\begin{enumerate}
  \item [(a)] $U_n^*TU_n\rightarrow D$ in the weak operator topology.
  \item [(b)] If $B=0$ and $C=0$, then $U_n^*TU_n\rightarrow D$ in the $*$-strong operator topology.
\end{enumerate}
\end{lemma}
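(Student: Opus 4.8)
The plan is to reduce everything to the isometry $V\colon \mathcal H_2\to \mathcal H_1\oplus\mathcal H_2$, $V\xi = 0\oplus\xi$, whose adjoint is $V^*(\eta_1\oplus\eta_2)=\eta_2$; thus $V$ is an isometry, $V^*TV = D$, and the hypothesis on $U_n$ says precisely that $U_nf_i = Vf_i$, equivalently $U_n^*Vf_i = f_i$, for all $1\le i\le n$. Since $U_n$ is unitary, $\|U_n^*TU_n\|=\|T\|$ for every $n$, so the operators $S_n:=U_n^*TU_n-D$ obey $\sup_n\|S_n\|\le 2\|T\|$. I would use this uniform bound to reduce each of the claimed convergences to its verification against the total set $\{f_j\}_{j\ge1}$, extending to all of $\mathcal H_2$ by the routine $\varepsilon/2$ argument.

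For (a), fix $i,j$ and take $n\ge\max(i,j)$, so that $U_nf_i=Vf_i$ and $U_nf_j=Vf_j$; then
\[
\langle U_n^*TU_nf_i,f_j\rangle=\langle TVf_i,Vf_j\rangle=\langle V^*TVf_i,f_j\rangle=\langle Df_i,f_j\rangle .
\]
Hence $\langle S_nf_i,f_j\rangle=0$ for all large $n$, giving WOT convergence on the total set and therefore everywhere.

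For (b), the hypothesis $B=C=0$ makes $T=A\oplus D$ act diagonally, and likewise $T^*=A^*\oplus D^*$; so it suffices to prove $U_n^*TU_nf_i\to Df_i$ in norm for each $i$ and then repeat the argument verbatim for $T^*$. For $n\ge i$ diagonality gives $TU_nf_i=TVf_i=V(Df_i)$, hence $U_n^*TU_nf_i=U_n^*V(Df_i)$. Expanding $Df_i=\sum_{j\ge1}c_jf_j$ and setting $\xi_N=\sum_{j\le N}c_jf_j$, for $n\ge N$ one has $U_n^*V\xi_N=\sum_{j\le N}c_jU_n^*Vf_j=\xi_N$, while $\|U_n^*V(Df_i-\xi_N)\|=\|Df_i-\xi_N\|$ since $U_n^*V$ is isometric. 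Choosing $N$ with $\|Df_i-\xi_N\|<\varepsilon$ then gives $\|U_n^*TU_nf_i-Df_i\|<2\varepsilon$ for all $n\ge N$, the desired norm convergence; the uniform bound upgrades it to $*$-SOT convergence on $\mathcal H_2$.

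I do not expect a serious obstacle; the one point to watch is why (b) genuinely needs the off-diagonal blocks to vanish. For general $T$ we have $U_n^*TU_nf_i=U_n^*(Bf_i\oplus Df_i)$, and the vector $U_n^*(Bf_i\oplus 0)$ keeps constant norm $\|Bf_i\|$ while being orthogonal to $f_1,\dots,f_n$, so it tends weakly but not in norm to $0$; this is exactly what limits (a) to a WOT statement and forces $B=0$ (and, upon applying the argument to $T^*$, also $C=0$) in (b).
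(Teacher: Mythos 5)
Your proof is correct. The paper offers no argument of its own (it dismisses the lemma as ``an easy exercise''), and your route --- conjugating through the isometry $V\xi=0\oplus\xi$, noting $U_nf_i=Vf_i$ for $i\le n$, and using the uniform bound $\sup_n\|U_n^*TU_n-D\|\le 2\|T\|$ to check WOT (resp.\ norm) convergence only on the total set $\{f_j\}$ --- is exactly the standard filling-in one would expect, including the correct observation that the off-diagonal block $U_n^*(Bf_i\oplus 0)$ has constant norm, which is why (b) requires $B=C=0$. The only trivial slip is in (b): the final estimate holds for $n\ge\max(i,N)$ rather than for all $n\ge N$, since you first fixed $n\ge i$ to use $TU_nf_i=V(Df_i)$.
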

\begin{proof}
It is an easy exercise.
\end{proof}
The following results gives some equivalent definitions of an MF algebra.
\begin{theorem}
Suppose that $\mathcal A$ is a unital C$^*$-algebra  generated by
a family of self-adjoint elements $x_1,\ldots, x_n$ in $\mathcal
A$. Then the following are equivalent:
\begin{enumerate}
   \item $\mathcal A$ is an MF algebra, i.e. there is a unital embedding from $\mathcal
   A$ into $\prod_k \mathcal M_{n_k}(\Bbb C)/\sum \mathcal M_{n_k}(\Bbb
   C)$ for a sequence of positive integers $\{n_k\}_{k=1}^\infty $.
   \item $\mathcal A$ has approximation property in the sense of Definition 4.2;
   \item There are a sequence of positive integers $\{m_k\}_{k=1}^\infty$ and  self-adjoint matrices
   $ A_1^{(k)},\ldots, A_n^{(k)} $ in $\mathcal M_{m_k}^{s.a.}(\Bbb C)$ for $k=1,2,\ldots$, such that
   $$
   \lim_{k \rightarrow \infty}\|P(A_1^{(k)},\ldots, A_n^{(k)})\|= \|P(x_1,\ldots,x_n)\|,   \ \forall \ P\in \Bbb C\langle X_1,\ldots,X_n\rangle,
   $$ where $\Bbb C\langle
X_1,\ldots, X_n\rangle $ is the set of all  noncommutative
polynomials in the indeterminates $X_1,\ldots, X_n$.
   \item Suppose $\pi: \mathcal A \rightarrow B(\mathcal H)$ is a faithful $*$-representation of $\mathcal A$ on an infinite dimensional separable complex Hilbert space $\mathcal H$. Then there are a sequence of positive integers $\{m_k\}_{k=1}^\infty$, families of self-adjoint matrices
   $\{A_1^{(k)},\ldots, A_n^{(k)}\}$ in $\mathcal M_{m_k}^{s.a.}(\Bbb C)$ for $k=1,2,\ldots$, and unitary operators $U_k: \mathcal
   H \rightarrow  (\Bbb C^{m_k})^\infty$ for $k=1,2,\ldots$,  such that
  \begin{enumerate} \item $$
   \lim_{k \rightarrow \infty}\|P(A_1^{(k)},\ldots, A_n^{(k)})\|= \|P(x_1,\ldots,x_n)\|,   \ \forall \ P\in \Bbb C\langle X_1,\ldots,X_n\rangle;
   $$\item $$
      U_k^*\ (A_i^{(k)})^\infty  \ U_k  \rightarrow \pi(x_i) \ in \ *-SOT  \ as \ k \rightarrow \infty,\ \ for \ 1\le i\le n,
   $$ where $(A_i^{(k)})^\infty=  A_i^{(k)}\oplus A_i^{(k)}\oplus A_i^{(k)}  \cdots \in B((\Bbb C^{m_k})^\infty)$. \end{enumerate}
\end{enumerate}

\end{theorem}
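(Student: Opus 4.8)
The plan is to establish the cycle $(1)\Rightarrow(3)\Rightarrow(2)\Rightarrow(3)$ together with $(3)\Rightarrow(4)\Rightarrow(3)$, treating the equivalence of $(1),(2),(3)$ as bookkeeping and concentrating on $(3)\Rightarrow(4)$. For $(1)\Leftrightarrow(3)$: given a unital embedding $\iota:\mathcal A\hookrightarrow\prod_k\mathcal M_{n_k}(\Bbb C)/\sum_k\mathcal M_{n_k}(\Bbb C)$, lift $\iota(x_i)$ to a bounded sequence $(\tilde A_i^{(k)})_k$ and replace each $\tilde A_i^{(k)}$ by its self-adjoint part (legitimate since $x_i=x_i^\ast$, so $\iota(x_i)$ is self-adjoint and the self-adjoint parts still lift it). Since $\iota$ is isometric and $\|[(P(A^{(k)}))_k]\|=\limsup_k\|P(A^{(k)})\|$, we get $\limsup_k\|P(A^{(k)})\|=\|P(x)\|$ for every $P\in\Bbb C\langle X_1,\ldots,X_n\rangle$, and a Cantor diagonal argument over the countable family $\{P_r\}_{r=1}^\infty$ — using that $P\mapsto\|P(A^{(k)})\|$ is, uniformly in $k$, Lipschitz in the coefficients on a fixed ball — passes to a subsequence along which $\limsup$ becomes $\lim$ for every polynomial, giving $(3)$; conversely $(3)$ makes $x_i\mapsto[(A_i^{(k)})_k]$ a norm-preserving unital $\ast$-homomorphism on $C^\ast(x_1,\ldots,x_n)=\mathcal A$. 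For $(2)\Leftrightarrow(3)$ take $R=1+\max_i\|x_i\|$: if $(3)$ holds, for any $r,\epsilon$ the tail of the sequence $(A^{(k)})$ lies in $\Gamma_R^{(top)}(x_1,\ldots,x_n;k,\epsilon,P_1,\ldots,P_r)$; if $(2)$ holds, apply Definition 4.2 with this $R$ and $\epsilon=1/r$ for each $r$, pick a microstate of large dimension from each resulting sequence, diagonalize to one sequence along which $\|P_j(A^{(k)})\|\to\|P_j(x)\|$ for each fixed $j$, and extend from rational‑coefficient $P_j$ to arbitrary $Q$ by the same uniform estimate. Finally $(4)\Rightarrow(3)$ is immediate, since $(4)(a)$ \emph{is} $(3)$.

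The substance is $(3)\Rightarrow(4)$. Fix a faithful $\pi:\mathcal A\to B(\mathcal H)$ with $\mathcal H$ separable and infinite-dimensional. Let $\mathcal A(n)$ be the universal unital C$^\ast$-algebra on $n$ self-adjoint generators $a_1,\ldots,a_n$ of norm $R$ (the full free product of $n$ copies of $C[-R,R]$, as in the proof of Theorem 4.1), and $q:\mathcal A(n)\to\mathcal A$ the canonical surjection with $q(a_i)=x_i$; set $\tilde\pi=\pi\circ q$. Given $(3)$ with matrices $A_i^{(k)}\in\mathcal M_{m_k}^{s.a.}(\Bbb C)$, let $\sigma_k:\mathcal A(n)\to B((\Bbb C^{m_k})^\infty)$ be the representation with $\sigma_k(a_i)=(A_i^{(k)})^\infty$; then $\|\sigma_k(a)\|\to\|\tilde\pi(a)\|$ for every $a\in\mathcal A(n)$ (first on polynomials in the generators, then for all $a$ by density). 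Each $\sigma_k$ is an infinite amplification, so $\sigma_k(\mathcal A(n))\cap\mathcal K((\Bbb C^{m_k})^\infty)=\{0\}$. Fix a countable dense subset $\{\xi_l\}_{l\ge1}$ of the unit ball of $\mathcal H$. For each $k$ we construct a unitary $U_k:\mathcal H\to(\Bbb C^{m_k})^\infty$ with
\[
\|U_k^\ast(A_i^{(k)})^\infty U_k\,\xi_l-\pi(x_i)\xi_l\|<\tfrac1k,\qquad 1\le i\le n,\ 1\le l\le k,
\]
which by density forces $U_k^\ast(A_i^{(k)})^\infty U_k\to\pi(x_i)$ in $\ast$-SOT. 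To produce $U_k$: because $\|\sigma_k(a)\|$ is within $o(1)$ of $\|\tilde\pi(a)\|$ on a large finite set of words, one finds a large finite-dimensional subspace of $(\Bbb C^{m_k})^\infty$ on which $\sigma_k$ nearly matches $\pi$ on $\mathrm{span}\{\xi_1,\ldots,\xi_k\}$ together with its images under $\pi(x_1),\ldots,\pi(x_n)$, and then uses the remaining infinite multiplicity to place the complementary part "at infinity" via Lemma 5.2(b), so that it contributes nothing in the $\ast$-strong topology; this is where Voiculescu's Weyl–von Neumann perturbation technique (as in Lemma 5.1), applied to the amplified, compact-free representations, does the matching. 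A last diagonalization over $\{P_r\}$ yields a single sequence $(m_k,A_i^{(k)},U_k)$ realizing both $(4)(a)$ and $(4)(b)$.

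The main obstacle is exactly this last construction. Condition $(3)$ only records \emph{norms} of polynomials, whereas $(4)(b)$ demands a point-$\ast$-strong approximation of a \emph{prescribed} faithful representation $\pi$, which in general has no finite-dimensional invariant subspaces and need not be quasidiagonal, so the matrix representations $\sigma_k$ cannot be close to $\tilde\pi$ on any fixed subspace without first conjugating. The resolution is the infinite amplification $(\cdot)^\infty$: it simultaneously removes nonzero compacts (so the Voiculescu-type absorption is available) and supplies the room needed to absorb the error, while Lemma 5.2 guarantees the absorbed part is invisible strongly. Carrying out this absorption with estimates uniform in $k$, so that the diagonalization in the preceding paragraph goes through, is the delicate step I would spend most of the effort on.
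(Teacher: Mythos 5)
Your outline of the easy implications is mostly fine, but one step is already wrong: in $(1)\Rightarrow(3)$ you propose to "pass to a subsequence along which $\limsup$ becomes $\lim$ for every polynomial." The quotient norm only yields $\limsup_k\|P(A^{(k)})\|=\|P(x)\|$, and different polynomials may attain their $\limsup$ along disjoint subsequences, so no subsequence extraction can work in general: take $\mathcal A=\Bbb C^2$ generated by the projection $x=(0,1)$ and let $A^{(k)}$ be the $1\times 1$ matrix $0$ for $k$ even and $1$ for $k$ odd; the embedding condition holds, yet the polynomials $t$ and $1-t$ force any good subsequence to be eventually odd and eventually even simultaneously. The standard repair is to replace subsequences by direct sums of microstates chosen for the different polynomials (the paper sidesteps all of this by citing Blackadar--Kirchberg and Lemma 5.6 of \cite{HaSh2}).

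The genuine gap, however, is in $(3)\Rightarrow(4)$, which is the actual content of the theorem: the step you explicitly defer ("carrying out this absorption with estimates uniform in $k$") is the whole proof, and the route you sketch does not work as stated. Voiculescu's theorem (Lemma 5.4 of the appendix) is an \emph{exact} absorption statement: to use it to place a copy of $\pi$ next to $\sigma_k$ you would need a unital $*$-homomorphism from $C^*(\sigma_k(\mathcal A(n)))$ onto $\pi(\mathcal A)$ (equivalently $\ker\sigma_k\subseteq\ker\tilde\pi$), and for a fixed $k$ this is impossible in general, since $C^*((A_1^{(k)})^\infty,\ldots,(A_n^{(k)})^\infty)$ is finite dimensional while $\mathcal A$ need not be; knowing that finitely many polynomial norms agree within $o(1)$ produces no homomorphism at all, so what your per-$k$ matching really requires is a quantitative, approximate version of Voiculescu's theorem (or, equivalently, a proof that the mixed matrix coefficients $\langle\pi(w)\xi_l,\xi_{l'}\rangle$ can be approximately realized by vectors in the amplification of a single microstate), and none of this is supplied. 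The paper avoids the need for any quantitative perturbation theorem by a different device: it first forms the infinite direct sums over the tail, $D(N_1;i)=B_i^{(N_1)}\oplus B_i^{(N_1+1)}\oplus\cdots$, for which one has the \emph{exact} inequality $\|P(D(N_1;1),\ldots,D(N_1;n))\|=\sup_{s\ge N_1}\|P(B_1^{(s)},\ldots,B_n^{(s)})\|\ge\|P(x_1,\ldots,x_n)\|$; this yields an honest surjective $*$-homomorphism $\rho_{N_1}$ of $\mathcal D(N_1)=C^*(D(N_1;1)^\infty,\ldots,D(N_1;n)^\infty)$ onto $\pi(\mathcal A)$ with $\rho_{N_1}(D(N_1;i)^\infty)=\pi(x_i)$, to which the exact Voiculescu theorem applies (the amplification kills the compacts), and the corner lemma (Lemma 5.5) then puts $\pi(\vec x)$ in the $*$-SOT closure of unitary conjugates of $\vec D(N_1)$. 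Only \emph{after} this matching does the paper truncate to a finite block $D(N_1,N_2;i)$, using $\lim_{N_2\to\infty}\|P(D(N_1,N_2;\cdot))\|=\|P(D(N_1;\cdot))\|$ and the unitary equivalence of $D(N_1,N_2;i)^\infty$ with $E(N_1,N_2;i)$, to obtain the finite matrices $A_i^{(k)}$ and the unitary $U_k$. Without this tail-sum-then-truncate idea (or a genuinely quantitative substitute for it), your construction of $U_k$ does not go through; the remaining ingredients you list (amplification removes compacts, the corner lemma makes the absorbed summand strongly invisible, a final diagonalization over $\{P_r\}$ and $\{\xi_r\}$) agree with the paper's proof but all hang on this missing step.
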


\begin{proof} $(1)\Leftrightarrow(2)\Leftrightarrow(3)$ are directly from Theorem 3.2.2 in \cite {BK}, Definition 5.3 and Lemma 5.6 in \cite{HaSh2}. $(4)\Rightarrow (3)$ is trivial. We need only to prove $(3) \Rightarrow (4)$.

Let $\{P_r\}_{r=1}^\infty$ be the collection of all noncommutative
polynomials in $\Bbb C\langle X_1,\ldots, X_n\rangle $ with rational
complex coefficients. Let $\{\xi_r\}_{r=1}^\infty$ be a dense subset
of the unit ball in $\mathcal H$. To show that $(3) \Rightarrow
(4)$, we need only to prove the following:  {\em Let $\pi: \mathcal
A \rightarrow B(\mathcal H)$ be a faithful $*$-representation of
$\mathcal A$ on an infinite dimensional separable complex Hilbert
space $\mathcal H$. For any positive integer $k$, there are a
positive integer  $ m_k $, a family of self-adjoint matrices
   $\{A_1^{(k)},\ldots, A_n^{(k)}\}$ in $\mathcal M_{m_k}^{s.a.}(\Bbb C)$, and a unitary
   operator  $U_k: \mathcal
   H \rightarrow  (\Bbb C^{m_k})^\infty$,  such that
  \begin{enumerate}\item  [(a)]  $$
   |\|P_r(A_1^{(k)},\ldots, A_n^{(k)})\|- \|P_r(x_1,\ldots,x_n)\||< \frac 1 k,   \ \forall \ 1\le r\le k;
   $$\item [(b)] $$
     \| U_k^*\ (A_i^{(k)})^\infty  \ U_k \cdot \xi_r -\pi(x_i)\cdot \xi_r\|<\frac 1 k,\ \ for \ 1\le i\le n, \ 1\le r\le k.
   $$ \end{enumerate}

 }

 Assume that $(3)$ is true. Thus there are a sequence of positive integers $\{t_s\}_{s=1}^\infty$ and families of self-adjoint matrices
   $\{B_1^{(s)},\ldots, B_n^{(s)}\}$ in $\mathcal M_{t_s}^{s.a.}(\Bbb C)$ for $s=1,2,\ldots$, such that
   $$
   \lim_{s \rightarrow \infty}\|P(B_1^{(s)},\ldots, B_n^{(s)})\|= \|P(x_1,\ldots,x_n)\|,   \ \forall \ P\in \Bbb C\langle X_1,\ldots,X_n\rangle.$$

 For any positive integers $N_1<N_2$ and
$1\le i\le n$, we define
$$
  \begin{aligned}
     D(N_1,N_2;i) & = B_i^{({N_1})}\oplus B_i^{({N_1+1})}\oplus \cdots\oplus B_i^{({N_2})}\\
     D(N_1;i) & = B_i^{({N_1})}\oplus B_i^{({N_1+1})}\oplus B_i^{({N_1+2})}\oplus \cdots\\
      D(N_1,N_2;i)^\infty &= D(N_1,N_2;i)\oplus D(N_1,N_2;i)\oplus D(N_1,N_2;i)\oplus \cdots\\
       D(N_1;i)^\infty &= D(N_1;i)\oplus D(N_1;i)\oplus D(N_1;i)\oplus
       \cdots.
  \end{aligned}
$$
It is not hard to see that, for any $P\in \Bbb C\langle
X_1,\ldots,X_n\rangle$,
 \begin{align}
\|P(D(N_1,N_2;1)^\infty ,\cdots, D(N_1,N_2;n)^\infty )\|&= \|P(D(N_1,N_2;1) ,\cdots, D(N_1,N_2;n) )\| \\
\|P(D(N_1;1)^\infty ,\cdots, D(N_1;n)^\infty )\|&= \|P(D(N_1;1) ,\cdots, D(N_1;n) )\| \\
\lim_{N_2\rightarrow \infty} \|P(D(N_1,N_2;1) ,\cdots, D(N_1,N_2;n) )\| &= \|P(D(N_1;1) ,\cdots, D(N_1;n) )\|\\
\|P(D(N_1;1)^\infty ,\cdots, D(N_1;n)^\infty )\| &=\sup_{N_2\ge N_1} \|P(B_1^{(N_2)},
\ldots, B_n^{(N_2)})\| \notag\\
&\ge \|P(x_1,\ldots,x_n)\| \\
\lim_{N_1\rightarrow \infty}\|P(D(N_1;1) ,\cdots, D(N_1;n) )\|& =
\|P(x_1,\ldots,x_n)\|.
\end{align}
Furthermore, we let
$$ \begin{aligned}
E(N_1,N_2;i)&=(B_i^{({N_1})})^\infty\oplus
(B_i^{({N_1+1})})^\infty\oplus
(B_i^{({N_1+2})})^\infty\oplus\cdots \oplus(B_i^{({N_2})})^\infty \\
E(N_1;i)&=(B_i^{({N_1})})^\infty\oplus
(B_i^{({N_1+1})})^\infty\oplus (B_i^{({N_1+2})})^\infty\oplus
\cdots.
\end{aligned}
$$It is easy to see that
\begin{align}
 \langle E(N_1,N_2;1),\ldots, E(N_1,N_2;n)\rangle &\thickapprox_u
\langle
 D(N_1,N_2;1)^\infty,\ldots,  D(N_1,N_2;n)^\infty\rangle \\
   \langle E(N_1 ;1),\ldots, E(N_1 ;n)\rangle &\thickapprox_u \langle
 D(N_1 ;1)^\infty,\ldots,  D(N_1 ;n)^\infty\rangle.
\end{align}

 Let $\mathcal D(N_1)=C^*(D(N_1;1)^\infty ,\cdots, D(N_1;n)^\infty )$
 be the unital C$^*$-algebra generated by $$D(N_1;1)^\infty , \cdots, D(N_1;n)^\infty $$ on
 a separable Hilbert space $\mathcal H_1$. Let $id_{\mathcal D(N_1)}$ be the
  identity representation of $\mathcal D(N_1)$ on $\mathcal H_1$, i.e.
   $$id_{\mathcal D(N_1)}: \mathcal D(N_1)\rightarrow \mathcal D(N_1)$$
    is defined by $id_{\mathcal D(N_1)}(x)=x$ for any $x$ in ${\mathcal D(N_1)}$. By the inequality (5.13), we know there is a
    unital $*$-homomorphism $$\rho_{N_1}: \mathcal D(N_1)\rightarrow \pi(\mathcal A)$$ such that
 $$
 \rho_{N_1}(D(N_1;i)^\infty )=\pi(x_i), \ \ \forall \ 1\le i\le n,
 $$ because $\pi$ is a faithful $*$-representation of $\mathcal A$ on
 $\mathcal H$.
Denote
$$
  \begin{aligned}
\vec D(N_1) &= \langle
 D(N_1 ;1)^\infty,\ldots,  D(N_1 ;n)^\infty\rangle \\
 \vec x &= \langle x_1,\ldots, x_n\rangle.
  \end{aligned}
$$
By Lemma 5.4, we know that
$$
id_{\mathcal D(N_1)} \thicksim_a id_{\mathcal D(N_1)}\oplus \rho_{N_1}.
$$
It follows that,
$$\begin{aligned}
\vec D(N_1) \oplus \pi(\vec x)&=\langle
 D(N_1 ;1)^\infty\oplus \pi(x_1),\ldots,  D(N_1 ;n)^\infty\oplus \pi(x_n)\rangle\\&=(id_{\mathcal
D(N_1)} \oplus \rho_{N_1})(\vec D(N_1))  \\
& \in \overline{ \left \{W^* \ \vec D(N_1) \ W \ | \ W: \mathcal
H_1\oplus \mathcal H\rightarrow \mathcal H_1 \text{ is a uniatry
operator} \right\}}^{\ Norm}.\end{aligned}
$$
By Lemma 5.5, we know that
$$
\pi(\vec x) \in \overline{ \left \{V^* \ \vec D(N_1)\oplus \pi(\vec
x) \ V \ | \ V: \mathcal H\rightarrow \mathcal H_1\oplus \mathcal H
\text{ is a uniatry operator} \right\}}^{\ *-SOT}.
$$
It induces that
\begin{equation}
\pi(\vec x) \in \overline{ \left \{U_1^* \ \vec D(N_1) \ U_1 \ | \
U_1: \mathcal H\rightarrow \mathcal H_1  \text{ is a uniatry
operator} \right\}}^{\
*-SOT}.
\end{equation}

Take a positive integer $k$. For such   $k$ and  the family of
polynomials $\{P_1,\ldots, P_k\}$, by equation (5.11), equation
(5.14), we know that there is  a positive integer $N_1$ such that
 \begin{enumerate}\item  [(i)]  $$
   |\|P_r(D(N_1 ;1) ,\ldots,  D(N_1 ;n) )\|- \|P_r(x_1,\ldots,x_n)\||< \frac 1 {4k},   \ \forall \ 1\le r\le
   k.
   $$ 
   \end{enumerate}
For the  family of vectors $\{\xi_1,\ldots, \xi_k\}$, by   the fact
(5.17)  we know there is a unitary $U_1: \mathcal H\rightarrow
\mathcal H_1$, such that
\begin{enumerate}
  \item [(ii)] $$
     \| U_1^*\    D(N_1 ;i)^\infty  \ U_1\cdot \xi_r -  \pi(x_i) \cdot \xi_r\|<\frac 1 {4k},\ \ for \ 1\le i\le n, \ 1\le r\le k.
   $$ \end{enumerate} In view of the fact  (5.16), i.e.
$$
  \langle
 D(N_1 ;1)^\infty,\ldots,  D(N_1 ;n)^\infty\rangle   \thickapprox_u\langle E(N_1 ;1),\ldots, E(N_1
 ;n)\rangle
$$
and  the fact (5.11), together with the definitions of
$E(N_1,N_2;i)$ and $E(N_1;i)$, we know   there are a positive
integer $N_2>N_1$ and a unitary $U_2: \mathcal H\rightarrow \mathcal
H_3$ such that
\begin{enumerate}\item  [(iii)]  $$
   |\|P_r(E(N_1,N_2 ;1) ,\ldots,  E(N_1,N_2 ;n) )\|- \|P_r(x_1,\ldots,x_n)\||< \frac 1 {2k},   \ \forall \ 1\le r\le k;
   $$\item [(iv)] $$
     \| U_2^*\    E(N_1,N_2;i)  \ U_2 \cdot \xi_r -  \pi(x_i) \cdot \xi_r\|<\frac 1 {2k},\ \ for \ 1\le i\le n, \ 1\le r\le
     k,
   $$ \end{enumerate} where we assume that $E(N_1,N_2 ;1) ,\ldots,  E(N_1,N_2
   ;n) $ act on a separable Hilbert space $\mathcal H_3$.
   In view of the fact (5.15), i.e.
$$\langle E(N_1,N_2;1),\ldots, E(N_1,N_2;n)\rangle  \thickapprox_u
\langle
 D(N_1,N_2;1)^\infty,\ldots,  D(N_1,N_2;n)^\infty\rangle,$$
if we let
   $$ A_i^{(k)} = D(N_1,N_2;i) \in\mathcal M_{m_k}^{s.a.}(\Bbb C), \ \ 1\le i\le n $$ for some $m_k\in \Bbb
   N$, then there is some
    $U_k:
   \mathcal H \rightarrow  (\Bbb C^{m_k})^\infty$ such that
     \begin{enumerate}\item  [(a)]  $$
   |\|P_r(A_1^{(k)},\ldots, A_n^{(k)})\|- \|P_r(x_1,\ldots,x_n)\||< \frac 1 k,   \ \forall \ 1\le r\le k;
   $$\item [(b)] $$
     \| U_k^*\ (A_i^{(k)})^\infty  \ U_k \cdot \xi_r -\pi(x_i)\cdot \xi_r\|<\frac 1 k,\ \ for \ 1\le i\le n, \ 1\le r\le k.
   $$ \end{enumerate}
   This completes the proof of the theorem.
\end{proof}

Recall that $\mathcal A\subset B(\mathcal H)$ is a separable
 quasidiagonal C$^*$-algebra  if there  is an increasing
sequence of finite-rank projections $\{E_i\}_{i=1}^\infty$ on $H$
tending strongly to the identity such that $\| x E_i-E_i x
\|\rightarrow 0$ as $i\rightarrow \infty$ for any $ x \in \mathcal
A.$  The examples of quasidiagonal C$^*$-algebras include all
abelian
 C$^*$-algebra and all finite dimensional C$^*$-algebras.  
 An abstract separable
C$^*$-algebra $\mathcal A$ is quasidiagonal if there is a faithful
$*$-representation  $\pi: \mathcal A\rightarrow B(\mathcal H)$ such
that $\pi(\mathcal A)\subset B(\mathcal H)$ is quasidiagonal.

Following lemma will be needed in the proof of Theorem 5.3.
\begin{lemma}
Suppose that $\mathcal A\subset B(\mathcal H)$ is a separable unital
quasidiagonal  C$^*$-algebra, and $x_1,\ldots, x_n$ are self-adjoint
elements in $\mathcal A$. For any $\epsilon>0$, any finite subset
$\{P_1,\ldots, P_r\}$ of $\Bbb C\langle X_1,\ldots, X_n\rangle$ and
any finite subset $\{\xi_1\ldots, \xi_r\}$ of $\mathcal H$, there is
a finite rank projection $p $ in $B(\mathcal H)$ such that:
\begin{enumerate}
\item [(i)] $\| \xi_k-p\ \xi_k\|<\epsilon $, $\|p\  x_ip\ \xi_k-x_i\xi_k\|<\epsilon, $ for all $1\le
i\le n$ and $1\le k\le r$;
\item [(ii)] $|\|P_j(p \ x_1p , \ldots, p \ x_np )\|_{B(p\mathcal H)}-\|P_j( x_1 , \ldots,
 x_n )\| |<\epsilon,$ for all $1\le j\le r.$ \end{enumerate}
\end{lemma}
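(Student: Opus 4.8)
The plan is to take the projection $p$ to be a suitably far-out member of a quasidiagonalizing sequence. By the definition of a quasidiagonal C$^*$-algebra recalled just above, there is an increasing sequence of finite-rank projections $\{E_i\}_{i=1}^\infty$ in $B(\mathcal H)$ with $E_i\to I$ in the strong operator topology and $\|xE_i-E_ix\|\to 0$ for every $x\in\mathcal A$; in particular this holds for $x=x_1,\dots,x_n$ and, more generally, for $x=P_j(x_1,\dots,x_n)$, $1\le j\le r$. I claim that $p=E_i$ works once $i$ is large, and I will fix the threshold at the end by taking $i$ beyond finitely many convergence conditions.

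For part (i), since $E_i\to I$ strongly we get $\|\xi_k-E_i\xi_k\|\to 0$ for each of the finitely many vectors $\xi_1,\dots,\xi_r$. For the second estimate write $E_ix_iE_i\xi_k-x_i\xi_k=E_ix_i(E_i-I)\xi_k+(E_i-I)x_i\xi_k$; the first term has norm at most $\|x_i\|\,\|(E_i-I)\xi_k\|\to 0$, and the second tends to $0$ by strong convergence of $E_i$. Hence (i) holds for all large $i$.

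For part (ii), consider the unital completely positive compression maps $\phi_i:\mathcal A\to B(E_i\mathcal H)$, $\phi_i(x)=E_ixE_i$ (so $\phi_i(1)=E_i$ is the identity of $B(E_i\mathcal H)$, matching the normalization used in the polynomial $P_j(px_1p,\dots,px_np)$). The crucial observation is that these maps are asymptotically multiplicative: from $(I-E_i)yE_i=(I-E_i)(yE_i-E_iy)$ one gets $\|(I-E_i)yE_i\|\le\|yE_i-E_iy\|\to 0$, whence $\|E_ixyE_i-E_ixE_iyE_i\|=\|E_ix(I-E_i)yE_i\|\le\|x\|\,\|(I-E_i)yE_i\|\to 0$. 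Inserting $E_i$'s one factor at a time along each monomial occurring in $P_1,\dots,P_r$ (finitely many monomials) yields $\|\phi_i(P_j(x_1,\dots,x_n))-P_j(\phi_i(x_1),\dots,\phi_i(x_n))\|\to 0$ for each $j$. So it suffices to show $\|\phi_i(y)\|_{B(E_i\mathcal H)}\to\|y\|$ for an arbitrary fixed $y\in\mathcal A$, applied to $y=P_j(x_1,\dots,x_n)$. The bound $\|\phi_i(y)\|\le\|y\|$ is immediate. For the reverse, choose, given $\delta>0$, a unit vector $\eta$ with $\|y\eta\|>\|y\|-\delta$; then exactly as in part (i) one has $\|E_iyE_i\eta-y\eta\|\to 0$, and since $\|E_i\eta\|\le 1$ we get $\|\phi_i(y)\|_{B(E_i\mathcal H)}\ge\|E_iyE_i\eta\|/\|E_i\eta\|\ge\|E_iyE_i\eta\|\to\|y\eta\|>\|y\|-\delta$. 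As $\delta>0$ is arbitrary, $\liminf_i\|\phi_i(y)\|\ge\|y\|$, giving the claim.

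Finally, choosing $i$ large enough that all the finitely many convergences in the last two paragraphs are within $\epsilon$ and setting $p=E_i$ produces (i) and (ii) simultaneously. I expect the only mildly delicate point to be the lower bound $\|\phi_i(y)\|\to\|y\|$: one must ensure that compressing $y$ to the finite-dimensional space $E_i\mathcal H$ does not make its norm drop in the limit, and this is precisely where strong (not merely weak) convergence $E_i\to I$ enters.
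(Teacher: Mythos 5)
Your proof is correct, and it is exactly the argument the paper intends: the paper simply states that the lemma "follows directly from the definition of quasidiagonality," and your use of a quasidiagonalizing sequence $\{E_i\}$, strong convergence for (i), and asymptotic multiplicativity of the compressions $E_i\,\cdot\,E_i$ plus the lower norm estimate via an almost-norming vector for (ii) is the standard way to fill in those details. Nothing further is needed.
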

\begin{proof}
It follows directly from the definition of quasidiagonality of a
separable  C$^*$-algebra.
\end{proof}

\begin{theorem}
Suppose that $\mathcal H$ is a separable complex Hilbert space,
  and $  r \in \Bbb N$.    Suppose that
 $$
 \{P_1,\ldots, P_r\}\subset \Bbb C\langle X_1,\ldots, X_n\rangle;
 \quad \{Q_1,\ldots, Q_r\}\subset \Bbb C\langle Y_1,\ldots,
 Y_m\rangle
 $$ are families of noncommutative polynomials. Suppose that
 $$\{\xi_1,\ldots, \xi_r\}\subset \mathcal H$$ is a family of unit vectors in $\mathcal H$.
Suppose that $ \mathcal A \subset B(\mathcal H)$ and  $ \mathcal B
\subset B(\mathcal H)$ are two separable unital quasidiagonal
C$^*$-algebras on $\mathcal H$. Assume that $\mathcal A $, or
$\mathcal B $, is generated by a family of self-adjoint elements
$x_1 , \ldots, x_n $ in $\mathcal A $, or by a family of
self-adjoint elements $y_1 , \ldots, y_m $ in $\mathcal B $
respectively.  Then there are  a positive integer $t$,   a family of
self-adjoint matrices
$$
\{A_1 , \ldots, A_n, \ B_1 , \ldots, B_m \}\subset \mathcal
M_{t}^{s.a.}(\Bbb C),
$$ and unitary $U :   \mathcal
   H   \rightarrow (\Bbb C^{t })^\infty $ such hat
   \begin{enumerate}
\item
$$ |\|P_i(A_1 , \ldots, A_n )\|-\|P_i(x_1 ,\ldots,
x_n \||<\frac 1 r; \ \ \forall \ 1\le i\le r
$$ and
$$ |\|Q_i(B_1 , \ldots, B_m )\|-\|Q_i(y_1 ,\ldots,
y_m \||<\frac 1 r; \ \ \forall \ 1\le i\le r
$$
\item
$$
\| U^*A_i^\infty U\xi_k - x_i\xi_k\|<\frac 1 r, \ \ \forall \ 1\le
i\le n, \ 1\le k\le r;
$$and
$$
\| U^* B_j^\infty U \xi_k - y_j\xi_k\|<\frac 1 r, \ \ \forall \ 1\le
j\le m, \ 1\le k\le r,
$$ where we follow the previous notation by assuming
$$
A_i^\infty= A_i\oplus A_i\oplus \cdots; \quad B_j^\infty=
B_j\oplus B_j\oplus \cdots
$$ are operators acting on the Hilbert space  $(\Bbb C^t)^\infty=  \Bbb C^t\oplus \Bbb C^t
\oplus \Bbb C^t \oplus \cdots$.
   \end{enumerate}

\end{theorem}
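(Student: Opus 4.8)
The plan is to use the quasidiagonality of $\mathcal{A}$ and of $\mathcal{B}$ \emph{separately} (through Lemma 5.8) and then to amalgamate the two matricial pictures on one Hilbert space. Write $\delta:=1/r$, fix $R>\max_i\|x_i\|+\max_j\|y_j\|$, and let $\mathcal{N}\subseteq\mathcal{H}$ be the finite-dimensional span of $\{\xi_k\}\cup\{x_i\xi_k\}\cup\{y_j\xi_k\}$. Applying Lemma 5.8 to the quasidiagonal algebra $\mathcal{A}$, with the polynomials $P_1,\dots,P_r$ and with finite set of vectors chosen to contain an orthonormal basis of $\mathcal{N}$ together with the $\xi_k$ (and $\epsilon$ small, to be fixed), I obtain a finite-rank projection $p$, of rank $t_1$, with $\|v-pv\|$ and $\|px_ipv-x_iv\|$ small on these vectors (hence, up to a fixed multiple of $\epsilon$, for every unit $v\in\mathcal{N}$) and $\bigl|\,\|P_l(px_1p,\dots,px_np)\|-\|P_l(x_1,\dots,x_n)\|\,\bigr|<\delta$ for $1\le l\le r$. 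Applying Lemma 5.8 to $\mathcal{B}$ the same way gives a finite-rank projection $q$ of rank $t_2$ with the analogous estimates for $y_1,\dots,y_m$, $Q_1,\dots,Q_r$ and the same $\mathcal{N}$. I also record an $\mathcal{A}$-approximately-commuting finite-rank projection $p'$ of rank $t_2$ still approximately capturing $\mathcal{N}$, and symmetrically a $\mathcal{B}$-approximately-commuting $q'$ of rank $t_1$ capturing $\mathcal{N}$.

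Now set $t:=t_1+t_2$, identify $\mathbb{C}^{t}$ with $p\mathcal{H}\oplus q\mathcal{H}$, and pick unitaries $\iota:p'\mathcal{H}\to q\mathcal{H}$, $\kappa:q'\mathcal{H}\to p\mathcal{H}$ with $\iota(p'v)\approx qv$ and $\kappa(q'v)\approx pv$ for $v$ in the chosen basis of $\mathcal{N}$ (possible since $p',q$ and $q',p$ both capture $\mathcal{N}$, so the relevant Gram matrices nearly agree). Put
$$A_i:=(px_ip)\ \oplus\ \iota\,(p'x_ip')\,\iota^{*},\qquad B_j:=\kappa\,(q'y_jq')\,\kappa^{*}\ \oplus\ (qy_jq),$$
self-adjoint elements of $\mathcal{M}_{t}(\mathbb{C})=\mathcal{M}_{t_1}(\mathbb{C})\oplus\mathcal{M}_{t_2}(\mathbb{C})$. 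Because a polynomial norm of a direct sum is the maximum over the blocks, because $p$ (resp. $q$) gives a good model by Lemma 5.8, and because $p'$ (resp. $q'$) approximately commutes with the $x_i$ (resp. $y_j$) — so that $\|P_l(p'x_1p',\dots,p'x_np')-p'P_l(x_1,\dots,x_n)p'\|$ can be made as small as we wish — one obtains $\bigl|\,\|P_l(A_1,\dots,A_n)\|-\|P_l(x_1,\dots,x_n)\|\,\bigr|<1/r$ and $\bigl|\,\|Q_l(B_1,\dots,B_m)\|-\|Q_l(y_1,\dots,y_m)\|\,\bigr|<1/r$, i.e. (1). For (2), let $\Psi:\mathcal{H}\to(p\mathcal{H})^{\infty}\oplus(q\mathcal{H})^{\infty}=(\mathbb{C}^{t})^{\infty}$ be the linear map $v\mapsto\tfrac1{\sqrt2}(pv)\oplus\tfrac1{\sqrt2}(qv)$, each component placed in the first copy of its summand. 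On $\mathcal{N}$ this $\Psi$ is approximately isometric, so $J:=\Psi(\Psi^{*}\Psi)^{-1/2}:\mathcal{N}\to(\mathbb{C}^{t})^{\infty}$ is an honest isometry very close to $\Psi$; since $\mathcal{H}$ is infinite dimensional, extend $J$ to a unitary $U:\mathcal{H}\to(\mathbb{C}^{t})^{\infty}$. A direct computation — on the first summand $(px_ip)(p\xi_k)\approx p(x_i\xi_k)$, on the second $\iota(p'x_ip')\iota^{*}(q\xi_k)\approx q(x_i\xi_k)$, by the choice of $\iota$ and the approximate commutation of $p'$ with $x_i$ — gives $A_i^{\infty}U\xi_k\approx\Psi(x_i\xi_k)\approx U(x_i\xi_k)$, whence $\|U^{*}A_i^{\infty}U\xi_k-x_i\xi_k\|<1/r$; the symmetric computation gives $\|U^{*}B_j^{\infty}U\xi_k-y_j\xi_k\|<1/r$.

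The point where I expect the real work to lie is the amalgamation itself: since $\mathcal{A}$ and $\mathcal{B}$ are only \emph{separately} quasidiagonal there is in general no single finite-rank projection approximately commuting with both (equivalently, $C^{*}(\mathcal{A},\mathcal{B})$ need not be quasidiagonal), so the two matricial models genuinely live on different spaces; the block-diagonal device above — each $A_i$ (resp. $B_j$) carrying its honest block together with an auxiliary block of exactly the partner's size that still implements the correct action on $\mathcal{N}$ — and the balancing factor $\tfrac1{\sqrt2}$ in $\Psi$ are what make a single unitary $U$ possible. The one genuinely delicate ingredient is producing the auxiliary projections $p',q'$ of the prescribed ranks $t_2,t_1$ while keeping approximate commutation and capture of $\mathcal{N}$; since approximately-commuting projections of arbitrary prescribed rank need not exist, I would instead take the common size to be a product $2t_1t_2$, with $\mathbb{C}^{t}=(p\mathcal{H}\otimes\mathbb{C}^{t_2})\oplus(\mathbb{C}^{t_1}\otimes q\mathcal{H})$, and fill the auxiliary blocks with tensor amplifications of the basic $\mathcal{A}$- and $\mathcal{B}$-models; amplification preserves polynomial norms and the implemented actions on vectors, so all the estimates above survive verbatim. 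Everything else is routine bookkeeping with the error bounds supplied by Lemma 5.8.
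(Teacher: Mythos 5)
Your route is genuinely different from the paper's. The paper (using its Lemma 5.6 --- the quasidiagonal approximation lemma you call ``Lemma 5.8'') amalgamates \emph{inside} $\mathcal H$: having chosen $p$ for $\mathcal A$ and $q$ for $\mathcal B$, it takes a single finite-rank projection $\tilde p\ge p,q$ whose rank is a common multiple of $\operatorname{rank}p$ and $\operatorname{rank}q$, writes $\tilde p=\sum_i e_i=\sum_j f_j$ with $e_1=p$, $f_1=q$, and fills $\tilde p\mathcal H$ with copies of $px_ip$ (via partial isometries, the first copy being $px_ip$ itself on $p\mathcal H$) and with copies of $qy_jq$ likewise. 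Amplification preserves the polynomial norms, and since each $\xi_k$ is already within $1/(2r)$ of both $p\xi_k$ and $q\xi_k$, one unitary $U$ identifying $\tilde p\mathcal H$ with the first copy of $\Bbb C^t$ yields both halves of condition (2) simultaneously: no splitting of the vectors, no intertwining unitaries, no polar decomposition, and no approximate commutation enter at all. Your external amalgamation ($\Bbb C^t=p\mathcal H\oplus q\mathcal H$, the $\tfrac1{\sqrt2}$-weighted embedding $\Psi$, the Gram-matrix intertwiners $\iota,\kappa$) is heavier and creates exactly the difficulty you flag: each $A_i$ and $B_j$ must also act correctly on the \emph{other} algebra's component of $\Psi\xi_k$, which is why you need auxiliary models $p',q'$ of prescribed ranks --- and, as you yourself note, quasidiagonality does not furnish approximately commuting projections of prescribed rank, so the main construction as written does not close.

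Your amplification fix is the right idea, but the claim that the estimates then survive ``verbatim'' skips the one point that matters. When you fill the auxiliary block of $A_i$ on $\Bbb C^{t_1}\otimes q\mathcal H$ with an amplification of $px_ip$, that amplification lives naturally on $p\mathcal H\otimes\Bbb C^{t_2}$ and must be transported by some unitary identification of these two $t_1t_2$-dimensional spaces; for condition (2) this identification cannot be arbitrary --- it must approximately carry the embedded copy $\{\zeta_1\otimes qw:\ w\in\mathcal N\}$ onto the embedded copy $\{pw\otimes\eta_1:\ w\in\mathcal N\}$, i.e.\ you must rerun your Gram-matrix argument at the amplified level (which now does work, precisely because the two dimensions agree, so the prescribed-rank obstruction disappears). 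With that choice made explicit, condition (1) follows cleanly --- each block of $P_l(A_1,\ldots,A_n)$ is unitarily an amplification of $P_l(px_1p,\ldots,px_np)$, so no commutator error is even needed --- and your computation $A_i^\infty\Psi\xi_k\approx\Psi(x_i\xi_k)$, $B_j^\infty\Psi\xi_k\approx\Psi(y_j\xi_k)$, followed by the polar-decomposition correction and extension to a unitary $U$, goes through with the usual $\epsilon$-bookkeeping. So your argument can be completed, but only after this repair; the paper's internal argument, whose only device is the common dominating projection of common-multiple rank, gets the same conclusion with considerably less machinery.
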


\begin{proof} Without the loss of generality, we will assume that
$\|x_i\|\le 1$ and $\|y_j\|\le 1$ for all $1\le i\le n, 1\le j\le
m$. Note that $\mathcal A$ and $\mathcal B$ are two separable
unital quasidiagonal C$^*$-algebras on $\mathcal H$. By Lemma
5.6, there are finite rank projections $p$ and $q$ in $B(\mathcal
H)$ such that:
\begin{enumerate}
\item [(i)] for all $1\le i\le n$, $1\le j\le m$ and $1\le k\le r$, we have $$\| p \
\xi_k- \xi_k\|<\frac 1 {2r}, $$
  and $$\| q\ \xi_k- \xi_k\|<\frac 1{2 r}; $$
\item [(ii)] for all $1\le i\le r$,   we have $$|\|P_i(px_1p, \ldots, px_np)\|-\|P_i( x_1 , \ldots,
 x_n )\||<\frac 1 r,$$   and
$$|\|Q_i(qy_1q, \ldots, qy_mq)\|-\|Q_i( y_1 , \ldots,
 y_m )\||<\frac 1 r.$$
 \end{enumerate}
Let $\tilde p$ be a finite rank projection in $B(\mathcal H)$ such
that   $p\le \tilde p$, $q\le\tilde p$  and   rank($\tilde p$) is
a common multiple of rank($  p$) and rank($q$). Therefore there
are equivalent mutually  orthogonal projections $e_1=p$,
$e_2,\ldots, e_{d_1}$ and equivalent mutually  orthogonal
projections $f_1=q, f_2,\ldots, f_{d_2}$ such that
$\sum_{i=1}^{d_1}e_i=\sum_{j=1}^{d_2}f_j=\tilde p$. Let
$v_1,\ldots,v_{d_1}$ and $w_1,\ldots, w_{d_2}$ be partial
isometries in $B(\mathcal H)$ such that $v_i^*v_i=e_1=p$,
$v_iv_i^*=e_i$, $w_j^*w_j=f_1=q$, and $w_jw_j^*=f_j$. Let
$\mathcal K=\tilde p\mathcal H$ and
$$
  \begin{aligned}
    \tilde x_i & = \sum_{i_1=1}^{d_1} v_{i_1}   x_i   v_{i_1}^*, \ \
    \ 1\le i\le n\\
    \tilde y_j & = \sum_{j_1=1}^{d_2} w_{j_1}   y_j   w_{j_1}^*, \ \
    \ 1\le j\le m.
  \end{aligned}
$$It is easy to check that
\begin{enumerate}
\item [(iii)] for all $1\le i\le n$, $1\le j\le m$ and $1\le k\le r$,
we have $$\|\tilde x_i \ p \xi_k-x_i\xi_k\|\le
   \|\tilde x_i \ p \  \xi_k-x_ip\xi_k\|
+\| x_ip\xi_k- x_i  \xi_k\| <\frac 1 r, $$
  and $$\| \tilde y_j\ q\xi_k-y_j\xi_k\|
\le   \|\tilde y_j \ q \  \xi_k-y_jq\xi_k\| +  \| y_jq\xi_k- \ y_j
\xi_k\| <\frac 1 r;
$$
\item [(iv)] for all $1\le i\le r$,   we have $$|\|P_i(\tilde x_1, \ldots, \tilde x_n)\|-\|P_i( x_1 , \ldots,
 x_n )\|| = |\|P_i(px_1p, \ldots, px_np)\|-\|P_i( x_1 , \ldots,
 x_n )\||<\frac 1 r,$$   and
$$|\|Q_i( \tilde y_1, \ldots,  \tilde y_m)\|-\|Q_i( y_1 , \ldots,
 y_m )\||=|\|Q_i(qy_1q, \ldots, qy_mq)\|-\|Q_i( y_1 , \ldots,
y_m)\||<\frac 1 r.$$
 \end{enumerate}

 Let $t=dim_{\Bbb C} \mathcal K$ and $\rho: B(\mathcal K)\rightarrow
 \mathcal M_t(\Bbb C)$ be a $*$-isomorphism from $B(\mathcal K)$
 onto $ \mathcal M_t(\Bbb C)$. Let
 $$
    \begin{aligned}
      A_i = \rho(\tilde x_i), \ \ \ B_j= \rho(\tilde y_j) \ \ \ for
      \  \ 1\le i\le n, \ 1\le j\le m;
    \end{aligned}
 $$ and $$
A_i^\infty= A_i\oplus A_i\oplus \cdots; \quad B_j^\infty=
B_j\oplus B_j\oplus \cdots
$$ be operators acting on the Hilbert space  $(\Bbb C^t)^\infty=  \Bbb C^t\oplus \Bbb C^t
\oplus \Bbb C^t \oplus \cdots$. Thus $$A_i=A_i^*,  \ \ \ B_j=B_j^*,
\qquad 1\le i\le n, \  1\le j\le m.$$ Moreover there exists a
unitary $U : \mathcal
   H  \rightarrow (\Bbb C^{t })^\infty   $, which induces a
faithful  $*-$representation  $Ad(U^*)$ of $ \mathcal M_t(\Bbb
C)^\infty$ on $\mathcal H$ satisfying  $\rho^{\infty} = Ad(U).
   $ Thus by (iii) and (iv), we have
    \begin{enumerate}
\item
$$ |\|P_i(A_1 , \ldots, A_n )\|-\|P_i(x_1 ,\ldots,
x_m \||<\frac 1 r, \ \ \forall \ 1\le i\le r
$$ and
$$ |\|Q_i(B_1 , \ldots, B_m )\|-\|Q_i(y_1 ,\ldots,
y_m \||<\frac 1 r; \ \ \forall \ 1\le i\le r;
$$
\item  for $ \ 1\le
k\le r, \ 1\le i\le n,  \ 1\le k\le r,$
$$
\| U^*A_i^\infty U\xi_k - x_i\xi_k\|=\|(U^*A_i^\infty U
\xi_k-U^*A_i^\infty U p\xi_k)+(\tilde x_i p\xi_k-x_i\xi_k)\|<\frac 2
r;
$$and
$$
\| U^* B_j^\infty U \xi_k - y_j\xi_k\|=\|(U^*B_j^\infty U
\xi_k-U^*B_j^\infty U  q\xi_k)+(\tilde y_jq\xi_k-y_j\xi_k)\|<\frac 2
r.
$$ \end{enumerate} This completes the proof of the theorem.
\end{proof}

Recall the definition of the unital full free product of two unital
C$^*$-algebras as follows.

\begin{definition}
The unital full free product of   unital C$^*$-algebras $\{\mathcal
A_i\}_{i=1}^m$ ($m\ge 2$) is a unital C$^*$-algebra $\mathcal D$,
denoted by $\mathcal A_1*_{\Bbb C } \mathcal A_2*_{\Bbb C } \cdots
*_{\Bbb C } \mathcal A_m$, equipped with
 unital embedding  $\{ \sigma_i: \ \mathcal A_i\rightarrow \mathcal D\}_{i=1}^m$,  such that (i) the set
$\cup_{i=1}^m \sigma_i(\mathcal A_i) $ is norm dense in $\mathcal
D$; and (ii) if $\phi_i$   is a unital $*$-homomorphism from
$\mathcal A_i$  into a unital C$^*$-algebra $\bar{\mathcal D}$ for
$i=1,2,\ldots, m$, then there is a unital $*$-homomorphisms $\psi $
from $\mathcal D$ to $\bar{\mathcal D}$ satisfying $\phi_i=\psi
\circ \sigma_i$, for $i=1,2,\ldots,m$. If there is no confusion
arising, we will identify the element $x_i$ in $\mathcal A_i$ with
its image in $\mathcal D$.
\end{definition}

Suppose that $\mathcal A$ and $\mathcal B$ are unital C$^*$-algebras
with a family of self-adjoint generators $ x_1,\ldots, x_n $, and $
y_1,\ldots, y_m $ respectively. Let $\mathcal A*_{\Bbb C } \mathcal
B$ be the unital full free product of $\mathcal A$ and $\mathcal B$,
defined as above, with a family of self-adjoint generators $\{
x_1,\ldots, x_n,y_1,\ldots, y_m\}$. Let   $\{P_r\}_{r=1}^\infty$, or
$\{Q_r\}_{r=1}^\infty$, or $\{\Psi_r\}_{r=1}^\infty$  be the
collection of all noncommutative polynomials in $\Bbb C\langle
X_1,\ldots, X_n\rangle $, or $\Bbb C\langle Y_1,\ldots, Y_m\rangle
$, or $\Bbb C\langle X_1,\ldots, X_n,Y_1,\ldots, Y_m\rangle $
respectively,  with rational complex coefficients. Then we have the
following result.
\begin{lemma}
 Let $\mathcal A, \mathcal B$ and $x_1,\ldots, x_n, y_1,\ldots, y_m$
 be as above. For any positive integer $a$, there is a positive
 integer $r$ so that the following hold:
   \begin{enumerate} \item []
   If $A_1, \ldots, A_n, B_1,\ldots, B_m$ are self-adjoint elements
   on a separable Hilbert space $\mathcal K$ so that
 $$ |\|P_i(A_1 , \ldots, A_n )\|-\|P_i(x_1 ,\ldots,
x_n) \||<\frac 1 r, \ \ \forall \ 1\le i\le r
$$ and
$$ |\|Q_i(B_1 , \ldots, B_m )\|-\|Q_i(y_1 ,\ldots,
y_m) \||<\frac 1 r, \ \ \forall \ 1\le i\le r,
$$ then, $\ \ \forall \ 1\le j\le {a},$
$$
 \|\Psi_j(A_1 , \ldots, A_n, B_1 , \ldots, B_m )\|_{B(\mathcal
K)}< \|\Psi_j(x_1 ,\ldots, x_n ,y_1 ,\ldots, y_m )\|_{\mathcal
A*_{\Bbb C } \mathcal B}+\frac 1 {a}.
$$

   \end{enumerate}
\end{lemma}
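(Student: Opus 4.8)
The plan is to argue by contradiction via a C$^*$-algebra ultraproduct, in the same spirit as the proof of Lemma 5.2. Suppose the conclusion fails: there is a positive integer $a$ such that for every $r\in\Bbb N$ there are a separable Hilbert space $\mathcal K_r$ and self-adjoint operators $A_1^{(r)},\ldots,A_n^{(r)},B_1^{(r)},\ldots,B_m^{(r)}$ in $B(\mathcal K_r)$ satisfying
$$\big|\,\|P_i(A_1^{(r)},\ldots,A_n^{(r)})\|-\|P_i(x_1,\ldots,x_n)\|\,\big|<\tfrac 1 r,\qquad \big|\,\|Q_i(B_1^{(r)},\ldots,B_m^{(r)})\|-\|Q_i(y_1,\ldots,y_m)\|\,\big|<\tfrac 1 r$$
for all $1\le i\le r$, together with an index $j_r$ with $1\le j_r\le a$ such that
$$\|\Psi_{j_r}(A_1^{(r)},\ldots,A_n^{(r)},B_1^{(r)},\ldots,B_m^{(r)})\|_{B(\mathcal K_r)}\ \ge\ \|\Psi_{j_r}(x_1,\ldots,x_n,y_1,\ldots,y_m)\|_{\mathcal A*_{\Bbb C}\mathcal B}+\tfrac 1 a.$$
Applying the first family of approximations to the coordinate monomials $X_1,\ldots,X_n$ (which occur among the $P_i$), and the second to $Y_1,\ldots,Y_m$, shows that $\{A_i^{(r)}\}_r$ and $\{B_j^{(r)}\}_r$ are uniformly norm bounded for $r$ large. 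Since $\{1,\ldots,a\}$ is finite, by the pigeonhole principle we may pass to a subsequence along which $j_r$ equals a fixed integer $j_0$.

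Fix a free ultrafilter $\gamma\in\beta(\Bbb N)\setminus\Bbb N$ and form the C$^*$-algebra ultraproduct $\mathcal C=\prod_{r=1}^\gamma B(\mathcal K_r)$, the quotient of $\prod_r B(\mathcal K_r)$ by the ideal $\{(Y_r)_r:\lim_{r\to\gamma}\|Y_r\|=0\}$, in which $\|[(Y_r)_r]\|=\lim_{r\to\gamma}\|Y_r\|$. Set $\hat A_i=[(A_i^{(r)})_r]$ and $\hat B_j=[(B_j^{(r)})_r]$, self-adjoint elements of $\mathcal C$. For each fixed $i$ the approximation $\big|\,\|P_i(A_1^{(r)},\ldots,A_n^{(r)})\|-\|P_i(x_1,\ldots,x_n)\|\,\big|<1/r$ holds for all $r\ge i$, so taking the ultralimit gives $\|P_i(\hat A_1,\ldots,\hat A_n)\|_{\mathcal C}=\|P_i(x_1,\ldots,x_n)\|_{\mathcal A}$ for every $i\ge1$; since the $P_i$ exhaust the rational-coefficient polynomials and the $\hat A_i$ are bounded, continuity of polynomial evaluation in the coefficients upgrades this to $\|P(\hat A_1,\ldots,\hat A_n)\|=\|P(x_1,\ldots,x_n)\|$ for every $P\in\Bbb C\langle X_1,\ldots,X_n\rangle$. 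Hence $x_i\mapsto\hat A_i$ is well defined and isometric on the dense $*$-subalgebra of $\mathcal A$ generated by $x_1,\ldots,x_n$, and extends to a unital $*$-homomorphism $\phi_{\mathcal A}:\mathcal A\to\mathcal C$; likewise there is a unital $*$-homomorphism $\phi_{\mathcal B}:\mathcal B\to\mathcal C$ with $\phi_{\mathcal B}(y_j)=\hat B_j$ for all $j$.

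By the universal property of the unital full free product $\mathcal D=\mathcal A*_{\Bbb C}\mathcal B$ there is a unital $*$-homomorphism $\psi:\mathcal D\to\mathcal C$ restricting to $\phi_{\mathcal A}$ on $\mathcal A$ and to $\phi_{\mathcal B}$ on $\mathcal B$; thus $\psi(x_i)=\hat A_i$ and $\psi(y_j)=\hat B_j$. Since $*$-homomorphisms are contractive,
$$\|\Psi_{j_0}(\hat A_1,\ldots,\hat A_n,\hat B_1,\ldots,\hat B_m)\|_{\mathcal C}=\|\psi\big(\Psi_{j_0}(x_1,\ldots,x_n,y_1,\ldots,y_m)\big)\|_{\mathcal C}\le\|\Psi_{j_0}(x_1,\ldots,x_n,y_1,\ldots,y_m)\|_{\mathcal D}.$$
On the other hand, after passing to the subsequence the inequality $\|\Psi_{j_0}(A_1^{(r)},\ldots,A_n^{(r)},B_1^{(r)},\ldots,B_m^{(r)})\|\ge\|\Psi_{j_0}(x_1,\ldots,x_n,y_1,\ldots,y_m)\|_{\mathcal D}+\tfrac 1 a$ holds for every $r$, so taking the ultralimit yields $\|\Psi_{j_0}(\hat A_1,\ldots,\hat A_n,\hat B_1,\ldots,\hat B_m)\|_{\mathcal C}\ge\|\Psi_{j_0}(x_1,\ldots,x_n,y_1,\ldots,y_m)\|_{\mathcal D}+\tfrac 1 a$, a contradiction. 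The only delicate point is the density step, passing from the rational-coefficient polynomials $P_i$ and $Q_i$ to arbitrary noncommutative polynomials, which relies on the uniform norm bounds on the $\hat A_i$ and $\hat B_j$ and yields the well-definedness of $\phi_{\mathcal A}$ and $\phi_{\mathcal B}$; the remainder is a routine C$^*$-ultraproduct argument.
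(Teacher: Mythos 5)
Your proof is correct and follows essentially the same contradiction argument as the paper: the paper uses the quotient $\prod_r B(\mathcal K^{(r)})/\sum_r B(\mathcal K^{(r)})$ (with its $\limsup$ norm) in place of your ultraproduct along a free ultrafilter, builds the embeddings $\phi_1,\phi_2$ from the polynomial norm approximations, invokes the universal property of $\mathcal A*_{\Bbb C}\mathcal B$, and contradicts the assumed failure exactly as you do. Your explicit pigeonhole/subsequence step and the uniform norm bounds are only spelled out more carefully than in the paper, not a different method.
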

\begin{proof}
We will prove the result of the lemma by using contradiction. Assume
that there are a positive integer $a$ and a sequence of self-adjoint
elements
$$A_1^{(r)}, \ldots, A_n^{(r)}, B_1^{(r)},\ldots, B_m^{(r)} \ \ \ in \ \ \ B(\mathcal
K^{(r)}),\ \ \ r=1,2,\ldots$$  satisfying, for all $r\ge 1$,
 \begin{equation} |\|P_i(A_1^{(r)} , \ldots, A_n^{(r)} )\|-\|P_i(x_1 ,\ldots,
x_n) \||<\frac 1 {r}, \ \ \forall \ 1\le i\le { r} \end{equation}
and
 \begin{equation} |\|Q_i(B_1^{(r)} , \ldots, B_m^{(r)} )\|-\|Q_i(y_1 ,\ldots,
y_m) \||<\frac 1 { r}, \ \ \forall \ 1\le i\le r;
\end{equation} but
\begin{equation}
  \max_{  1\le j\le {a} }\{ \|\Psi_j(A_1^{(r)} , \ldots, A_n^{(r)}, B_1^{(r)} ,
\ldots, B_m^{(r)} )\|_{B(\mathcal K^{(r)})}- \|\Psi_j(x_1 ,\ldots,
x_n ,y_1 ,\ldots, y_m )\|_{\mathcal A*_{\Bbb C } \mathcal B}\}\ge
\frac 1 {a} .
\end{equation}
Consider the unital C$^*$-algebra $\prod_r B(\mathcal
K^{(r)})/\sum_r B(\mathcal K^{(r)})$ and elements
$$[(A_1^{(r)})_r] , \ldots, [(A_n^{(r)})_r], [(B_1^{(r)})_r] ,
\ldots, [(B_m^{(r)})_r] \ \ \in \ \ \prod_r B(\mathcal
K^{(r)})/\sum_r B(\mathcal K^{(r)}).$$ By the inequalities (5.18)
and (5.19), there are unital embedding $\phi_1: \mathcal A
\rightarrow  \prod_r B(\mathcal K^{(r)})/\sum_r B(\mathcal K^{(r)})$
and $\phi_2: \mathcal B \rightarrow  \prod_r B(\mathcal
K^{(r)})/\sum_r B(\mathcal K^{(r)})$ so that
$$
\phi_1(x_{i_1}) = [(A_{i_1}^{(r)})_r], \ \  \phi_2(y_{i_2}) =
[(B_{i_2}^{(r)})_r], \ \ 1\le {i_1}\le n, \ 1\le {i_2}\le m.
$$ By the definition of the full free product
$\mathcal A*_{\Bbb C } \mathcal B$, there is a $*$-homomorphism
$\phi: \mathcal A*_{\Bbb C } \mathcal B\rightarrow \prod_r
B(\mathcal K^{(r)})/\sum_r B(\mathcal K^{(r)})$ so that
$$
\phi (x_{i_1}) = [(A_{i_1}^{(r)})_r], \ \  \phi (y_{i_2}) =
[(B_{i_2}^{(r)})_r], \ \ 1\le {i_1}\le n, \ 1\le {i_2}\le m,
$$ where   $x_1,\ldots, x_n, y_1,\ldots, y_m$ are identified  as the elements
in $\mathcal A*_{\Bbb C } \mathcal B$. Therefore, $\ \ \forall \
1\le j\le {a},$
$$\begin{aligned} \|\Psi_j([(A_1^{(r)})_r] ,  \ldots,
[(A_n^{(r)})_r], [(B_1^{(r)})_r] , &\ldots, [(B_m^{(r)})_r
])\|_{\prod_r B(\mathcal K^{(r)})/\sum_r B(\mathcal K^{(r)})}\\
&= \limsup_{r\rightarrow\infty } \|\Psi_j(A_1^{(r)} , \ldots,
A_n^{(r)}, B_1^{(r)} , \ldots, B_m^{(r)} )\|_{B(\mathcal K^{(r)})}\\
&\le \|\Psi_j(x_1 ,\ldots, x_n ,y_1 ,\ldots, y_m )\|_{\mathcal
A*_{\Bbb C } \mathcal B} ,\end{aligned}
$$ which contradicts   the inequality (5.20).

\end{proof}

In \cite{EX},    Exel and Loring showed that the unital full free
product of two residually finite dimensional C$^*$-algebra is
residually finite dimensional, which extends an earlier result by
Choi in \cite{Choi}. In \cite{Bo}, Boca showed that the unital full
free product of two quasidiagoanl C$^*$-algebras is also
quasidiagonal. Our next result provides the analogue of the
preceding results from Choi, Exel and Loring, and Boca in the
context of MF algebras.
\begin{theorem}
Suppose $\mathcal A$ and $\mathcal B$ are   finitely generated
unital MF algebras. Then the unital full free product $\mathcal
A*_{\Bbb C}\mathcal B$ is an MF algebra.

\end{theorem}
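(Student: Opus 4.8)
The plan is to verify that $\mathcal{D}=\mathcal{A}\ast_{\mathbb{C}}\mathcal{B}$ satisfies condition $(3)$ of Theorem 5.2 --- equivalently, the approximation property of Definition 4.2 --- since by that theorem this is the same as being MF. Write $x_{1},\dots,x_{n}$ and $y_{1},\dots,y_{m}$ for the given self-adjoint generators of $\mathcal{A}$ and $\mathcal{B}$, identified with their images in $\mathcal{D}$, so that $\mathcal{D}$ is generated by $x_{1},\dots,x_{n},y_{1},\dots,y_{m}$. The task is then to produce, for each $k$, an integer $t_{k}$ and self-adjoint matrices $C_{1}^{(k)},\dots,C_{n}^{(k)},D_{1}^{(k)},\dots,D_{m}^{(k)}\in\mathcal{M}_{t_{k}}^{s.a.}(\mathbb{C})$ with $\|\Psi(C_{1}^{(k)},\dots,D_{m}^{(k)})\|\to\|\Psi(x_{1},\dots,y_{m})\|_{\mathcal{D}}$ for every $\Psi\in\mathbb{C}\langle X_{1},\dots,X_{n},Y_{1},\dots,Y_{m}\rangle$. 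The conceptual point --- the analogue for MF algebras of Boca's quasidiagonal free product theorem --- is that although an MF algebra need not be quasidiagonal, the \emph{infinite ampliation} of a finite-dimensional representation always generates a quasidiagonal C$^{*}$-algebra, so the matricial models furnished by Theorem 5.2$(4)$ can be fed into Theorem 5.3.

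First I would fix a faithful representation $\pi\colon\mathcal{D}\to B(\mathcal{H})$ on a separable infinite-dimensional Hilbert space and a sequence $\{\xi_{r}\}_{r=1}^{\infty}$ dense in the unit ball of $\mathcal{H}$. Since the canonical embeddings are injective, $\pi|_{\mathcal{A}}$ and $\pi|_{\mathcal{B}}$ are faithful representations of the finitely generated MF algebras $\mathcal{A}$ and $\mathcal{B}$, so Theorem 5.2$(4)$ applies to each: for every $k$ there are self-adjoint matrices $A_{i}^{(k)}\in\mathcal{M}_{p_{k}}^{s.a.}(\mathbb{C})$, $B_{j}^{(k)}\in\mathcal{M}_{q_{k}}^{s.a.}(\mathbb{C})$ and unitaries $V_{k}\colon\mathcal{H}\to(\mathbb{C}^{p_{k}})^{\infty}$, $W_{k}\colon\mathcal{H}\to(\mathbb{C}^{q_{k}})^{\infty}$ with $\bigl|\,\|P_{s}(A^{(k)})\|-\|P_{s}(x)\|\,\bigr|<1/k$, $\|V_{k}^{\ast}(A_{i}^{(k)})^{\infty}V_{k}\xi_{s}-\pi(x_{i})\xi_{s}\|<1/k$ for $s\le k$, and the analogous statements for $B_{j}^{(k)}$, $Q_{s}$, $y_{j}$, $W_{k}$. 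Put $\hat{x}_{i}^{(k)}=V_{k}^{\ast}(A_{i}^{(k)})^{\infty}V_{k}$ and $\hat{y}_{j}^{(k)}=W_{k}^{\ast}(B_{j}^{(k)})^{\infty}W_{k}$. Because $(A_{i}^{(k)})^{\infty}$ is block-diagonal along $(\mathbb{C}^{p_{k}})^{\infty}=\bigoplus_{\ell\ge1}\mathbb{C}^{p_{k}}$, the increasing finite-rank projections onto $\bigoplus_{\ell\le N}\mathbb{C}^{p_{k}}$ commute with it, so $C^{\ast}((A_{i}^{(k)})^{\infty})$ is quasidiagonal; conjugating by $V_{k}$, the algebra $\mathcal{A}_{k}:=C^{\ast}(\hat{x}_{1}^{(k)},\dots,\hat{x}_{n}^{(k)})\subset B(\mathcal{H})$ is quasidiagonal, and likewise $\mathcal{B}_{k}:=C^{\ast}(\hat{y}_{1}^{(k)},\dots,\hat{y}_{m}^{(k)})$. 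Now apply Theorem 5.3, with $r=k$, to the quasidiagonal pair $\mathcal{A}_{k},\mathcal{B}_{k}$ on $\mathcal{H}$, the polynomials $P_{1},\dots,P_{k}$ and $Q_{1},\dots,Q_{k}$, and the vectors $\xi_{1},\dots,\xi_{k}$: this produces $t_{k}\in\mathbb{N}$, self-adjoint matrices $C_{i}^{(k)},D_{j}^{(k)}\in\mathcal{M}_{t_{k}}^{s.a.}(\mathbb{C})$ and a unitary $U_{k}\colon\mathcal{H}\to(\mathbb{C}^{t_{k}})^{\infty}$ with $\bigl|\,\|P_{s}(C^{(k)})\|-\|P_{s}(\hat{x}^{(k)})\|\,\bigr|<1/k$, $\|U_{k}^{\ast}(C_{i}^{(k)})^{\infty}U_{k}\xi_{s}-\hat{x}_{i}^{(k)}\xi_{s}\|<1/k$ for $s\le k$ (and the analogues for $D^{(k)},\hat{y}^{(k)},Q_{s}$). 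Since $\|P_{s}(\hat{x}^{(k)})\|=\|P_{s}((A^{(k)})^{\infty})\|=\|P_{s}(A^{(k)})\|$, one triangle inequality yields $\|P_{s}(C^{(k)})\|\to\|P_{s}(x)\|$ and $\|Q_{s}(D^{(k)})\|\to\|Q_{s}(y)\|$ for each $s$, and a second gives $\|U_{k}^{\ast}(C_{i}^{(k)})^{\infty}U_{k}\xi_{s}-\pi(x_{i})\xi_{s}\|<2/k$, $\|U_{k}^{\ast}(D_{j}^{(k)})^{\infty}U_{k}\xi_{s}-\pi(y_{j})\xi_{s}\|<2/k$ for $s\le k$.

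It then remains to show $\|\Psi(C^{(k)},D^{(k)})\|\to\|\Psi(x,y)\|_{\mathcal{D}}$ for every $\Psi$, say $\Psi=\Psi_{j}$. For the upper bound, given $a\ge j$ choose $r=r(a)$ as in Lemma 5.7; once $k$ is large enough that $k\ge r$ and $2/k$ is below the tolerance of that lemma, the norm approximations above place $(C^{(k)},D^{(k)})$ in its hypothesis, so $\|\Psi_{j}(C^{(k)},D^{(k)})\|<\|\Psi_{j}(x,y)\|_{\mathcal{D}}+1/a$; hence $\limsup_{k}\|\Psi_{j}(C^{(k)},D^{(k)})\|\le\|\Psi_{j}(x,y)\|_{\mathcal{D}}+1/a$ for all $a\ge j$, and letting $a\to\infty$ gives $\limsup_{k}\|\Psi_{j}(C^{(k)},D^{(k)})\|\le\|\Psi_{j}(x,y)\|_{\mathcal{D}}$. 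For the lower bound, set $T_{i}^{(k)}=U_{k}^{\ast}(C_{i}^{(k)})^{\infty}U_{k}$ for $1\le i\le n$ and $T_{n+j}^{(k)}=U_{k}^{\ast}(D_{j}^{(k)})^{\infty}U_{k}$ for $1\le j\le m$; these are self-adjoint, uniformly norm-bounded (taking $P=X_{i}$ and $Q=Y_{j}$ in the norm approximations), and converge strongly to $\pi(x_{i})$, $\pi(y_{j})$ respectively by the vector estimates and the density of $\{\xi_{s}\}$. Being self-adjoint they converge in $\ast$-SOT, so joint $\ast$-SOT continuity of multiplication on bounded sets gives $\Psi_{j}(T^{(k)})\to\pi(\Psi_{j}(x,y))$ strongly. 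Fixing $\varepsilon>0$ and using density to pick $\xi_{s}$ with $\|\pi(\Psi_{j}(x,y))\xi_{s}\|>\|\pi(\Psi_{j}(x,y))\|-\varepsilon=\|\Psi_{j}(x,y)\|_{\mathcal{D}}-\varepsilon$ (here $\pi$ is isometric), we get $\|\Psi_{j}(T^{(k)})\xi_{s}\|>\|\Psi_{j}(x,y)\|_{\mathcal{D}}-2\varepsilon$ for $k$ large; and since unitary conjugation and infinite ampliation preserve norms, $\|\Psi_{j}(C^{(k)},D^{(k)})\|=\|\Psi_{j}(T^{(k)})\|\ge\|\Psi_{j}(T^{(k)})\xi_{s}\|$. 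Combining the two bounds verifies condition $(3)$ of Theorem 5.2, so $\mathcal{D}=\mathcal{A}\ast_{\mathbb{C}}\mathcal{B}$ is an MF algebra.

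The step I expect to be the real obstacle is exactly this passage from the MF hypothesis to the quasidiagonality required by Theorem 5.3: Theorem 5.3 cannot be applied to $\mathcal{A},\mathcal{B}$ themselves, and the fix is to replace them by the quasidiagonal algebras $\mathcal{A}_{k},\mathcal{B}_{k}$ generated by infinite ampliations of their finite-dimensional approximants, arranged to lie on the single Hilbert space $\mathcal{H}$ and to approximate $\pi(x_{i}),\pi(y_{j})$ on the first $k$ of a fixed dense set of vectors. After that, the remaining content is the two-layer diagonal bookkeeping --- controlling finitely many polynomial norms through Lemma 5.7 and finitely many vector displacements through $\ast$-SOT convergence, with both tolerances shrinking as $k\to\infty$ --- which is routine but requires care in the order of quantifiers.
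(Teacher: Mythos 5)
Your proposal is correct and follows essentially the same route as the paper's proof: faithfully represent the free product, use Theorem 5.2(4) to get matricial models whose ampliations (conjugated back to $\mathcal H$) generate quasidiagonal algebras approximating $\pi(x_i),\pi(y_j)$ in $*$-SOT, merge them to a common matrix size via Theorem 5.3, then obtain the upper bound on $\|\Psi_j\|$ from Lemma 5.7 and the lower bound from strong convergence on a dense set of vectors. The quantifier bookkeeping and the observation that infinite ampliations of matrices are quasidiagonal are exactly the points the paper relies on as well.
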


\begin{proof}
Assume that $\mathcal A$, and $\mathcal B$, are generated by a
family of self-adjoint elements $x_1,\ldots, x_n$, and $y_1,\ldots,
y_m$ respectively. Thus we can assume that $x_1,\ldots, x_n,
y_1,\ldots, y_m$ also generate  $\mathcal A*_{\Bbb C}\mathcal B$ as
a C$^*$-algebra. Assume the $\pi: \mathcal A*_{\Bbb C}\mathcal B
\rightarrow B(\mathcal H)$ is a faithful $*$-representation of
$\mathcal A*_{\Bbb C}\mathcal B$ on a separable Hilbert space
$\mathcal H$. Let $\{P_r\}_{r=1}^\infty$, or $\{Q_r\}_{r=1}^\infty$,
or $\{\Psi_r\}_{r=1}^\infty$  be the collection of all
noncommutative polynomials in $\Bbb C\langle X_1,\ldots, X_n\rangle
$, or $\Bbb C\langle Y_1,\ldots, Y_m\rangle $, or $\Bbb C\langle
X_1,\ldots, X_n,Y_1,\ldots, Y_m\rangle $ respectively,  with
rational complex coefficients.

To show that $\mathcal A*_{\Bbb C}\mathcal B$ is an MF algebra, it
suffices to show that {\em for any positive integer $r_0$, there are
a positive integer $t$ and  self-adjoint matrices
$$
A_1,\ldots, A_n, B_1,\ldots, B_m  \ \in \ \mathcal M_t(\Bbb C)
$$ such that, $ \forall \  1\le j\le r_0$,
$$
\left | \|\Psi_j(A_1 , \ldots, A_n, B_1 , \ldots, B_m )\|_{\mathcal
M_t(\Bbb C)}- \|\Psi_j(x_1 ,\ldots, x_n ,y_1 ,\ldots, y_m
)\|_{\mathcal A*_{\Bbb C } \mathcal B}\right |<\frac 1 {r_0}.
$$}

Suppose that $\{\xi_r\}$ is a dense subset of the unit ball of
$\mathcal H$. Note that $\mathcal A$ and $\mathcal B$ are MF
algebras. For any positive integer $r$, by Theorem 5.2 we know there
are   positive integers $n_r, m_r, $ and self-adjoint matrices
   $$ E_1^{(r)},\ldots, E_n^{(r)}  \in  \mathcal M_{n_r}^{s.a.}(\Bbb C), \ \
    F_1^{(r)},\ldots, F_m^{(r)}  \in  \mathcal M_{m_r}^{s.a.}(\Bbb C),
    $$   and unitary operators $U_r: \mathcal
   H \rightarrow  (\Bbb C^{n_r})^\infty$,  $V_r: \mathcal
   H \rightarrow  (\Bbb C^{m_r})^\infty$  such that
  \begin{enumerate} \item  \begin{align}
   \left |\|P_i(E_1^{(r)},\ldots, E_n^{(r)})\|- \|P_i(x_1,\ldots,x_n)\|\right |< \frac 1 {2r},   \ \forall \ 1\le i\le
   r;\\
   \left |\|Q_i(F_1^{(r)},\ldots, F_m^{(r)})\|- \|Q_i(y_1,\ldots,y_m)\|\right |< \frac 1 {2r},   \ \forall \ 1\le i\le
   r.
   \end{align}
   \item
     \begin{align} \|U_r^*\ (E_{i_1}^{(r)})^\infty
      \ U_r \xi_j  - \pi(x_{i_1})\xi_j \|<\frac 1 {2r},  \ \ \  \ \forall \ 1\le i_1\le
   n, \ 1\le j\le { r}; \\
   \|V_r^*\ (F_{i_2}^{(r)})^\infty
   \ V_r \xi_j - \pi(y_{i_2})\xi_j \|<\frac 1 {2r},  \ \ \  \ \forall \ 1\le i_2\le
   m, \ 1\le j\le { r}.
   \end{align}
    \end{enumerate}
    Note both $\{ U_r^*\ (E_{i_1}^{(r)})^\infty   \ U_r   \}_{i=1}^n$ and
    $\{V_r^*\ (F_{i_2}^{(r)})^\infty  \ V_r\}_{i_2=1}^m$ are sets of
    quasidiagonal operators on the Hilbert space $\mathcal H$. By
    Theorem 5.3,
there are a positive integer $t_r$,    families of self-adjoint
matrices
\begin{equation}
\{A_1^{(r)} , \ldots, A_n^{(r)}\} \subset \mathcal
M_{t_r}^{s.a.}(\Bbb C) \quad  \ \ \{B_1^{(r)} , \ldots, B_m^{(r)}
\}\subset \mathcal M_{t_r}^{s.a.}(\Bbb C), \end{equation} and
unitary $W_r : \mathcal
   H  \rightarrow (\Bbb C^{t_r })^\infty  $ such hat
   \begin{enumerate}
\item [(3)]
\begin{equation} |\|P_i(A_1^{(r)} , \ldots, A_n^{(r)}
)\|-\|P_i(E_1^{(r)},\ldots, E_n^{(r)})\||<\frac 1 {2r},  \ \
\forall \ 1\le i\le { r}, \end{equation} and
\begin{equation} |\|Q_i(B_1^{(r)} , \ldots, B_m^{(r)} )\|
-\|Q_i(F_1^{(r)},\ldots, F_m^{(r)})\||<\frac 1 {2r}, \ \ \forall
\ 1\le i\le r;
\end{equation}
\item [(4)]
\begin{equation}
\| W_r^*(A_{i_1}^{(r)})^\infty W_r\xi_j -U_r^*\
(E_{i_1}^{(r)})^\infty \ U_r \xi_j\|<\frac 1 {2r}, \ \ \forall \
1\le i_1\le n, \ 1\le j\le r;
\end{equation}and
\begin{equation}
\| W_r^* (B_{i_2}^{(r)})^\infty W_r \xi_j - V_r^*\
(F_{i_2}^{(r)})^\infty  \ V_r\xi_j\|<\frac 1 {2r}, \ \ \forall \
1\le i_2\le m, \ 1\le j\le r.
\end{equation}
   \end{enumerate}
Combining the inequalities (5.21), (5.22), (5,16) and (5.27), we
have
\begin{align}
   \left |\|P_i(A_1^{(r)} , \ldots, A_n^{(r)})\|- \|P_i(x_1,\ldots,x_n)\|\right |< \frac 1 r,   \ \forall \ 1\le i\le
   r;\\
   \left |\|Q_i(B_1^{(r)} , \ldots, B_m^{(r)})\|- \|Q_i(y_1,\ldots,y_m)\|\right |< \frac 1 r,   \ \forall \ 1\le i\le
   r.
   \end{align}
   Combining the inequalities (5.23), (5.24), (5.28) and (5.29), we
   have
\begin{align} \|W_r^*(A_{i_1}^{(r)})^\infty W_r \xi_j  - \pi(x_{i_1})\xi_j \|<\frac 1 r,  \ \ \  \ \forall \ 1\le i_1\le
   n, \ 1\le j\le r; \\
   \| W_r^* (B_{i_2}^{(r)})^\infty W_r \xi_j  - \pi(y_{i_2})\xi_j \|<\frac 1 r,  \ \ \  \ \forall \ 1\le i_2\le
   m, \ 1\le j\le r.
   \end{align}
By Lemma 5.7 and the inequalities (5.30), (5.31), we know that, for
$1\le j\le r_0$,
\begin{equation}
 \limsup_{r\rightarrow \infty} \|\Psi_j(A_1^{(r)} , \ldots,
 A_n^{(r)}, B_1^{(r)} , \ldots, B_m^{(r)} )\|_{\mathcal M_{t_r}(\Bbb
 C)} \le \|\Psi_j(x_1 ,\ldots, x_n ,y_1 ,\ldots, y_m )\|_{\mathcal
A*_{\Bbb C } \mathcal B}.
\end{equation}

Note  $\{W_r^*(A_{i_1}^{(r)})^\infty W_r , W_r^*
 (B_{i_2}^{(r)})^\infty W_r
\}_{r,i_1,i_2}$ is a bounded subset in $B(\mathcal H)$.
 Since $\{\xi_r\}_{r=1}^\infty$ is a dense subset of the unit
ball of $\mathcal H$, by the inequality (5.32) and (5.33) we know
that, as $r$ goes to infinity,
$$
 W_r^*(A_{i_1}^{(r)})^\infty W_r \rightarrow \pi(x_{i_1}), \ \ W_r^*
 (B_{i_2}^{(r)})^\infty W_r\rightarrow \pi(y_{i_2}) \ in \  SOT, \ \ \forall
 1\le i_1\le n, 1\le i_2\le m.
$$
Hence, for $1\le j\le r_0$
\begin{align}
 \liminf_{r\rightarrow \infty} &\|\Psi_j(A_1^{(r)} , \ldots,
 A_n^{(r)}, B_1^{(r)} , \ldots, B_m^{(r)} )\|_{\mathcal M_{t_r}(\Bbb
 C)}\notag\\
 &=\liminf_{r\rightarrow \infty}  \|\Psi_j(W_r^*(A_{ 1}^{(r)})^\infty W_r  , \ldots,
 W_r^*(A_{n}^{(r)})^\infty W_r , W_r^*
 (B_{1}^{(r)})^\infty W_r , \ldots, W_r^*
 (B_{m}^{(r)})^\infty W_r )\|_{B(\mathcal H)}\notag\\& \ge \|\Psi_j(x_1 ,\ldots, x_m ,y_1 ,\ldots, y_m )\|_{\mathcal
A*_{\Bbb C } \mathcal B}.
\end{align}
From the fact (5.25) and inequalities (5.34) and (5.35), it follows
that,  for the given $r_0$, there are a positive integer $t$ and
self-adjoint matrices
$$
A_1,\ldots, A_n, B_1,\ldots, B_m  \ \in \ \mathcal M_t(\Bbb C)
$$ such that, $ \forall \  1\le j\le r_0$,
$$
\left | \|\Psi_j(A_1 , \ldots, A_n, B_1 , \ldots, B_m )\|_{\mathcal
M_t(\Bbb C)}- \|\Psi_j(x_1 ,\ldots, x_n ,y_1 ,\ldots, y_m
)\|_{\mathcal A*_{\Bbb C } \mathcal B}\right |<\frac 1 {r_0}.
$$ This completes the proof of the theorem.
\end{proof}

\begin{remark}
{\em Using the similar argument in the proof of Theorem 5.3, the
result of Theorem 5.3 can be quickly extended as follows: }
Suppose $\mathcal A$ and $\mathcal B$ are separable  unital MF
algebras. Then the unital full free product $\mathcal A*_{\Bbb
C}\mathcal B$ is an MF algebra.

\end{remark}

In \cite{Haag},  Haagerup and  Thorbj{\o}rnsen showed  $C_r^*(F_2)$
is an MF algebra. Combining with Voiculescu's discussion in
\cite{VoiQusi}, they were able to conclude a striking result that
$Ext(C_r^*(F_2))$ is not a group. Also based on \cite{VoiQusi},
Brown showed in \cite{Br} that if $\mathcal  A$ is an  MF algebra
and $Ext(\mathcal A)$ is a group, then $\mathcal A$ is a
quasidiagonal C$^*$-algebra. It is a well-known fact that
$C_r^*(F_2)$ is not a quasidiagonal C$^*$-algebra  and any
subalgebra of a quasidiagonal C$^*$-algebra is again  quasidiagonal.
Now from Haagerup and Thorbj{\o}rnsen's result on $C_r^*(F_2)$  and
our Theorem 5.4, it quickly follows the next corollary.

\begin{corollary}Suppose that $\mathcal B$ is a unital separable MF
algebra. Then  $C_r^*(F_2)*_{\Bbb C}\mathcal B$ is an MF algebra.
Moreover, $Ext(C_r^*(F_2)*_{\Bbb C}\mathcal B)$ is not a group.
\end{corollary}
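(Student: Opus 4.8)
The plan is to deduce both assertions from Theorem 5.4 (in the separable form recorded in Remark 5.2) together with two external inputs: the theorem of Haagerup and Thorbj{\o}rnsen in \cite{Haag} that $C_r^*(F_2)$ is an MF algebra, and Brown's dichotomy in \cite{Br}. First I would take $\mathcal A = C_r^*(F_2)$, which is a separable unital MF algebra by \cite{Haag}, and $\mathcal B$ the given separable unital MF algebra; applying Theorem 5.4 as extended in Remark 5.2 shows immediately that $C_r^*(F_2)*_{\Bbb C}\mathcal B$ is an MF algebra. I would also record that this algebra is separable and unital, being a unital full free product of two separable C$^*$-algebras, so that its extension semigroup $Ext$ is defined.

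For the ``not a group'' part I would argue by contradiction. Suppose $Ext(C_r^*(F_2)*_{\Bbb C}\mathcal B)$ were a group. By Brown's result in \cite{Br}, an MF algebra whose $Ext$ semigroup is a group is necessarily quasidiagonal, so $C_r^*(F_2)*_{\Bbb C}\mathcal B$ would be a quasidiagonal C$^*$-algebra. But the canonical unital embedding $\sigma_1$ from Definition 5.6 identifies $C_r^*(F_2)$ with a unital C$^*$-subalgebra of $C_r^*(F_2)*_{\Bbb C}\mathcal B$, and any C$^*$-subalgebra of a quasidiagonal C$^*$-algebra is again quasidiagonal; hence $C_r^*(F_2)$ itself would be quasidiagonal. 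This contradicts the well-known fact that $C_r^*(F_2)$ is not a quasidiagonal C$^*$-algebra. Therefore $Ext(C_r^*(F_2)*_{\Bbb C}\mathcal B)$ is not a group.

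The argument is short, and the only points requiring care are bookkeeping ones: verifying that the separable unital full free product indeed falls under the hypotheses of Brown's theorem and under the definition of $Ext$, and that $\sigma_1$ is a genuine unital $*$-monomorphism, which is built into the universal property of the full free product in Definition 5.6. I do not expect a real obstacle here, since all of the substantive content has already been supplied by Theorem 5.4 and by the cited work of Haagerup--Thorbj{\o}rnsen and Brown; the proof is essentially the assembly of these facts.
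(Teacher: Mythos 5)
Your proposal is correct and follows essentially the same route as the paper: apply Theorem 5.4 (in the separable form of Remark 5.2) with the Haagerup--Thorbj{\o}rnsen result that $C_r^*(F_2)$ is MF to get the MF property, then argue by contradiction via Brown's theorem and the fact that quasidiagonality passes to C$^*$-subalgebras while $C_r^*(F_2)$ is not quasidiagonal. No gaps; this is exactly the assembly the paper intends.
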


\bigskip

\vspace{1cm}

\noindent{Don Hadwin; \quad Qihui Li; \quad Junhao Shen }

\vspace{0.2cm}

\noindent{{Department of Mathematics and Statistics}}

 \noindent
University of New Hampshire}

\noindent  Durham, NH, 03824

\noindent {Email: don@math.unh.edu;
  \  qme2@cisunix.unh.edu; \
  \ jog2@cisunix.unh.edu \qquad }


\begin{thebibliography}{99}                                                                                               %



\bibitem {BK} B. Blackadar,  E. Kirchberg, ``Generalized inductive limits of finite-dimensional $C\sp *$-algebras,"  Math. Ann.  307  (1997),  no. 3, 343--380.

 \bibitem {BR} B. Blackadar, A.  Kumjian, and M. R{\o}rdam, ``Approximately central matrix units and the structure of noncommutative tori,"
 $K$-Theory  6  (1992),  no. 3, 267--284.

\bibitem {Bo} F. Boca, ``A note on full free product $C\sp *$-algebras, lifting and quasidiagonality," Operator theory, operator algebras and related topics (Timi\c soara, 1996),  51--63, Theta Found., Bucharest, 1997.

\bibitem {Br} N. Brown,``On quasidiagonal $C\sp *$-algebras,"  Operator
algebras and applications,  19--64, Adv. Stud. Pure Math., 38, Math.
Soc. Japan, Tokyo, 2004.

\bibitem {BDK}  N. Brown, K. Dykema, K. Jung, ``Free Entropy Dimension in Amalgamated Free
Products," math.OA/0609080.

\bibitem {Choi} M. Choi,   ``The full $C\sp{*} $-algebra of the free group on two generators,"  Pacific J. Math.  87  (1980), no. 1, 41--48.

\bibitem {DH}M. Dost\'{a}l, D. Hadwin, \textquotedblleft An alternative to
free entropy for free group factors," International Workshop on
Operator Algebra and Operator Theory (Linfen, 2001). Acta Math. Sin.
(Engl. Ser.) 19 (2003), no. 3, 419--472.




\bibitem {Dyk}K. Dykema, \textquotedblleft Two applications of free entropy,"
Math. Ann. 308 (1997), no. 3, 547--558.

\bibitem{EX} R. Exel ; T. Loring,   ``Finite-dimensional representations of free product $C\sp *$-algebras,"  Internat. J. Math.  3  (1992),  no. 4, 469--476.

\bibitem {Ge1}L. Ge, \textquotedblleft Applications of free entropy to finite
von Neumann algebras," Amer. J. Math. 119 (1997), no. 2, 467--485.

\bibitem {Ge2}L. Ge, \textquotedblleft Applications of free entropy to finite
von Neumann algebras," II. Ann. of Math. (2) 147 (1998), no. 1,
143--157.

\bibitem {GePo}L. Ge, S. Popa, \textquotedblleft On some decomposition
properties for factors of type $\mathrm{II}_{1}$," Duke Math. J. 94
(1998), no. 1, 79--101.


\bibitem {GS1}L. Ge, J. Shen, \textquotedblleft Free entropy and property $T$
factors," Proc. Natl. Acad. Sci. USA 97 (2000), no. 18, 9881--9885
(electronic).


\bibitem {GS2}L. Ge, J. Shen, \textquotedblleft On free entropy dimension of
finite von Neumann algebras," Geom. Funct. Anal. 12 (2002), no. 3,
546--566.

\bibitem {Haag} U. Haagerup,  S. Thorbj{\o}rnsen,   ``A new application of random matrices: ${\rm Ext}(C\sp *\sb {\rm red}(F\sb 2))$ is not a group,"  Ann. of Math. (2)  162  (2005),  no. 2, 711--775.

\bibitem {H}D. Hadwin, \textquotedblleft Free entropy and approximate
equivalence in von Neumann algebras\textquotedblright, Operator
algebras and operator theory (Shanghai, 1997), 111--131, Contemp.
Math., 228, Amer. Math. Soc., Providence, RI, 1998.


\bibitem {HaSh} D. Hadwin, J. Shen, \ \textquotedblleft Free orbit diension of finite von Neumann
algebrasm\textquotedblright  Journal of Functional Analysis 249
(2007) 75-91.

\bibitem {HaSh2} D. Hadwin, J. Shen, \ \textquotedblleft Topological free entropy dimension in unital
C$^*$-algebras,\textquotedblright    arXiv: 0704.0667

\bibitem {HaSh3} D. Hadwin, J. Shen, \ \textquotedblleft Topological free entropy dimension in unital
C$^*$-algebras II: Orthogonal sum of unital C$^*$
algebras,\textquotedblright    arXiv: 0708.0168

\bibitem {HaSh4} D. Hadwin, J. Shen, \ \textquotedblleft Some examples of Blackadar and Kirchberg's MF algebras,\textquotedblright    arXiv: 0806.4712

\bibitem {Jung3} K. Jung, ``The free entropy dimension of hyperfinite von Neumann algebras,"
  Trans. Amer. Math. Soc.  355  (2003),  no. 12, 5053--5089 (electronic).

\bibitem {Jung} K. Jung,  ``A free entropy dimension lemma," Pacific J. Math. 211 (2003), no. 2, 265--271.
\bibitem {Jung2} K. Jung, ``Strongly 1-bounded von Neumann
algebras," Math arKiv: math.OA/0510576.

\bibitem {JuSh}  K. Jung, D. Shlyakhtenko, ``All generating sets of all property T von Neumann algebras
have free entropy dimension $\leq 1$," Math arKiv: math.OA/0603669.

\bibitem {McDuff} D. McDuff, ``Central sequences and the hyperfinite factor,"  Proc. London Math. Soc. (3)  21  1970 443--461.







\bibitem   {Olsen}   C. Olsen and W. Zame, ``Some C$^*$-algebras with a single
   generator,"  Trans. of A.M.S. 215 (1976), 205-217.


\bibitem {PiVo}  M. Pimsner, D.  Voiculescu,  ``Imbedding the irrational rotation $C\sp{*} $-algebra into an AF-algebra,"  J. Operator Theory  4  (1980), no. 2, 201--210.

\bibitem {Ste1} M. Stefan, ``Indecomposability of free group factors over nonprime subfactors and abelian subalgebras,"
  Pacific J. Math.  219  (2005),  no. 2, 365--390.

\bibitem {Ste2} M. Stefan, ``The primality of subfactors of finite index in
the interpolated free group factors,"  Proc. Amer. Math. Soc.  126
(1998),  no. 8, 2299--2307.

\bibitem{Sza2} S. zarek, ``Nets of Grassmann Manifold and
Orthogonal Group," Proceedings of research workshop on Banach
space theory (Iowa City, Iowa, 1981), 169--185, Univ. Iowa, Iowa
City, IA, 1982.

\bibitem{Sza}S. Szarek, ``Metric entropy of homogeneous spaces,"  Quantum probability,
 395--410, Banach Center Publ., 43, Polish Acad. Sci., Warsaw, 1998.

\bibitem{VoiQusi} D. Voiculescu, ``A note on quasi-diagonal $C\sp *$-algebras and homotopy,"  Duke Math. J.  62  (1991),  no. 2, 267--271.

\bibitem {V1}D. Voiculescu, \textquotedblleft Circular and semicircular
systems and free product factors," Operator algebras, unitary
representations, enveloping algebras, and invariant theory (Paris,
1989), 45--60, Progr. Math., 92, Birkhauser Boston, MA, 1990.

\bibitem {V2}D. Voiculescu, \textquotedblleft The analogues of entropy and of
Fisher's information measure in free probability theory II,"
Invent.\ Math., {\ 118} (1994), 411-440.

\bibitem {V3}D. Voiculescu, \textquotedblleft The analogues of entropy and of
Fisher's information measure in free probability theory III: The
absence of Cartan subalgebras," Geom.\ Funct.\ Anal.\ {\ 6} (1996)
172--199.

\bibitem {V4}D. Voiculescu, \textquotedblleft Free entropy dimension $\leq1$
for some generators of property $T$ factors of type
$\mathrm{II}_{1}$," J. Reine Angew. Math. 514 (1999), 113--118.
\bibitem {Voi} D. Voiculescu, \textquotedblleft The topological version of free entropy,"  Lett.
Math. Phys.  62  (2002),  no. 1, 71--82.



\end{thebibliography}
\end{document}